\newtheorem{theorem}{Theorem}[section]
\newtheorem{proposition}[theorem]{Proposition}
\newtheorem{lemma}[theorem]{Lemma}
\newtheorem{corollary}[theorem]{Corollary}
\newtheorem{remark}{Remark}[section]
\newtheorem{theo}{Theorem}
\theoremstyle{definition}
\newtheorem{definition}[theorem]{Definition}
\newcommand{\R}{\mathbb{R}}
\newcommand{\C}{\mathbb{C}}
\newcommand{\N}{\mathbb{N}}
\newcommand{\Z}{\mathbb{Z}}
\newcommand{\T}{\mathbb{T}}
\newcommand{\lap}{\bigtriangleup}
\newcommand{\E}{\mathbb E}
\newcommand{\re}{\textrm{Re}}
\newcommand{\an}[1]{\langle #1 \rangle}
\begin{document}

\title{Invariant measure for the Schr\"odinger equation on the real line}
\author{Federico Cacciafesta\footnote{SAPIENZA - Universit\'a di Roma, Dipartimento di Matematica, Piazzale A. Moro 2, I-00185 Roma, Italy}\; and Anne-Sophie de Suzzoni\footnote{Universit\'e Paris 13, Sorbonne Paris Cit\'e, LAGA, CNRS ( UMR 7539), 99, avenue Jean-Baptiste Cl\'ement, F-93430 Villetaneuse, France}}

\maketitle

\begin{abstract} In this paper, we build a Gibbs measure for the cubic defocusing Schr\"odinger equation on the real line with a decreasing interaction potential, in the sense that the non linearity $|u|^2u$ is multiplied by a function $\chi$ which we assume integrable and smooth enough. We prove that this equation is globally well-posed in the support of this measure and that the measure is invariant under the flow of the equation. What is more, the support of the measure (the set of initial data) is disjoint from $L^2$.\end{abstract}

\tableofcontents

\section{Introduction}

Our aim is to build an invariant measure for the cubic defocusing Schr\"odinger equation on the real line. 

Such issue has been dealt with by Burq-Thomann-Tzvetkov in \cite{BTTlong} when the equation presents a potential. The interest of using a well-chosen potential is that it traps the solution, in the sense that it forbids it to be too big, or even make $0$ on some domain if the potential is infinite. In \cite{BTTlong} , the equation is given by
\begin{equation}\label{potschro}
i\partial_t u + \lap u - V u  \pm |u|^2 u = 0
\end{equation}
with $V = |x|^2$. The operator $-\lap + V$ admits a countable number of eigenvalues and eigenfunctions, which is a necessary condition to build an invariant with the following method. Namely, a function on $\R$ is considered as an element $(u_n)_n$ of $\C^\N$ by writing 
$$
u = \sum_n u_n e_n
$$
with $(e_n)_n$ the eigenfunctions of $-\lap + V$. The linear or Gaussian part of the measure is then built as the measure $\mu$ induced by the random variable
$$
\varphi = \sum_n \frac{g_n}{\sqrt{\lambda_n}}e_n
$$
where $g_n$ are independent complex centred and normalised Gaussian variables. This measure can be interpreted as 
$$
d\mu \Big( \sum_n u_n e_n \Big) = \prod_{n}e^{-\lambda_n |u_n|^2}\frac{ \lambda_n du_nd\overline{u_n}}{2\pi} =  e^{-\int \overline u (-\lap + V)u} ``dL(u)"\; ,
$$
that is, a measure with density $e^{-E_c(u)}$ with regard to the Lebesgue measure, would it exist on $\C^\N$, where $E_c(u) = \int \overline u (-\lap + V)u$ is the kinetic energy of the equation \eqref{potschro}. This interpretation acquires a meaning in finite dimension.

In the defocusing case $i\partial_t u + \lap u - V u - |u|^2 u = 0$, the invariant measure is
$$
d\rho(u) = De^{-H_p(u)}d\mu(u)
$$
where $H_p(u) = \frac12 \int |u|^4$ is the potential energy and $D = \|e^{-H_p(u)}\|_{L^1_\mu}^{-1}$ is a normalisation factor. Then, $\rho$ can be interpreted as 
$$
d\rho(u) = e^{-H(u)} ``dL(u)"
$$
with $H(u) = H_c(u) + H_p(u)$ the total energy, invariant under the flow of this equation. This measure is supported in some Sobolev space $H^s$.

In this paper, we deal with the following equation
\begin{equation}\label{oureq}
i\partial_t u + \lap u - \chi |u|^2 u = 0
\end{equation}
where the potential of interaction $ \chi$ is a non negative function smooth and integrable enough (we require $0\leq \chi \lesssim \an{x}^{-\alpha}$ and $|(1-\lap)^{s_0/2} \chi | \lesssim \an{x}^{-\alpha}$ for some $\alpha > 1$ and $s_0 > 1/4$). The absence of potential prevents us to use a decomposition of $u$ in eigenfunctions of $-\lap$. Nevertheless, we can use measures as in the work by McKean-Vaninsky \cite{MVsta} , such that the Gaussian part $\mu$ of the measure is the one induced by the random variable 
$$
\varphi(x) = \int_{\R} e^{inx} \frac{dW_n(\omega)}{\sqrt{1+n^2}}
$$
where $\omega$ is the random event and $W_n$ is a Brownian motion, which makes $\varphi(x)$ a It\^o integral. Note that $1+n^2$ does not correspond to the spectrum of $-\lap$ but of $1-\lap$, which makes the interpretation of $\mu$
$$
d\mu(u) = e^{- \|u\|_{L^2}^2 - H_c(u)}``dL(u)"
$$
with $H_c (u) = \int \overline u (-\lap) u$ instead of $d\mu(u) = e^{- H_c(u)}``dL(u)"$. However, we use the invariance of the $L^2$ norm under the flow of \eqref{oureq} to prove the invariance of the final measure. The potential energy does not present any problem to build the invariance measure as $d\rho(u) = e^{-H_p(u)}d\mu(u)$. 

The main difficulty with this measure is that it is supported in $H^s_\textrm{loc}$, $s<1/2$ or in a weighted $H^s$. In the case of the Klein-Gordon equation, this problem has been solved in \cite{dSito} using the finite propagation speed of the dispersion relation of this equation, $\omega (k) = \sqrt{1+k^2}$. Here, in the Schr\"odinger equation case, we cannot use such an argument as the dispersion relation is given by $\omega(k) = k^2$ and $\frac{d\omega}{dk} = 2k$, which goes to $\infty$ when $k$ goes to $\infty$. However, we deal with this problem by applying smooth cut-offs, in the spirit of the work by Burq, G\'erard and Tzvetkov in \cite{BGTstr} . We refer the reader to Appendix A. 

Another issue is that we need a topology defined on a space containing the support of the measure $\rho$ (a weak enough topology) and for which at least the linear flow (the flow of $i\partial_t u - \lap u = 0$) is continuous. We build this topology (we call it the topology of $X^{-1/2-}$) as the one induced by the family of norms $\|\cdot\|_{\sigma_n}$ defined as
$$
\|f\|_{\sigma_n}  = \| \an{t}^{-2} \an{x}^{-2} L(t) (1-\lap)^{\sigma_n/2}  f\|_{L^2_{t,x}}
$$
with $\sigma_n  = -\frac12 -\frac{1}{n}$ for all $n \geq 1$. 

The result is the following 
\begin{theo}\label{theo-result}There exists a measure $\rho$ such that \begin{itemize}
\item the support of $\rho$ does not intersect $L^2$, that is $\rho(L^2) = 0$,
\item the equation \eqref{oureq} is globally well-posed on the support of $\rho$, we call its flow $\psi(t)$,
\item the measure $\rho$ is invariant on the topological $\sigma$-algebra of $X^{-1/2-}$, that is, for all measurable set $A$ and all time $t \in \R$, $\rho(\psi(t)^{-1} A) = \rho(A)$.
\end{itemize}
\end{theo}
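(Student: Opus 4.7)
The plan is to follow the Bourgain--Lebowitz--Rose--Speer finite-dimensional approximation scheme, adapted to the real line in the spirit of McKean--Vaninsky and of Burq--G\'erard--Tzvetkov. I would introduce a spectral truncation $\Pi_N$ onto frequencies $|n|\le N$ and decompose $\varphi = \Pi_N\varphi + (I-\Pi_N)\varphi$. The truncated evolution $\psi_N(t)$ is defined by letting $(I-\Pi_N)u$ solve the free linear equation and letting $\Pi_N u$ solve the finite-dimensional Hamiltonian system
\begin{equation*}
i\partial_t u_N + \lap u_N - \Pi_N\bigl(\chi |u_N|^2 u_N\bigr) = 0,
\end{equation*}
which is a globally well-posed ODE on the (real finite-dimensional) subspace $\Pi_N L^2$. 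The truncated measure $d\rho_N = e^{-H_p(\Pi_N u)}\,d\mu$ is invariant under $\psi_N(t)$: the linear part preserves $\mu$ by unitarity and Gaussianity, while on the finite-dimensional subspace Liouville's theorem combined with conservation of the truncated Hamiltonian and of $\|\Pi_N u\|_{L^2}^2$ gives invariance of the density.

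For the first bullet, I would observe that $\mu$ is the law of a stationary centred Gaussian field with $\E|\varphi(x)|^2 = \int_{\R}\frac{dn}{1+n^2}$, a constant in $x$, so by Fubini $\E\|\varphi\|_{L^2(\R)}^2 = +\infty$, and a stationarity/zero-one argument upgrades this to $\|\varphi\|_{L^2}=+\infty$ $\mu$-almost surely. Since $d\rho = De^{-H_p(\varphi)}d\mu$ with $H_p \ge 0$, the density is bounded by $D$ and $\rho(L^2) = 0$ follows immediately.

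For the second and third bullets, the heart of the argument is the passage to the limit $N \to \infty$ in the topology of $X^{-1/2-}$. I would first establish, via the multilinear structure of $\chi |u|^2 u$ combined with the decay of $\chi$ (which substitutes for the missing $L^2$ conservation: morally $\chi|u|^2 \in L^1_x$ whenever $u$ lies in a suitable weighted-$L^\infty$ space) and the smooth cutoff machinery of Appendix A to handle infinite propagation speed, uniform-in-$N$ estimates of the type
\begin{equation*}
\|\psi_N(t)\varphi\|_{\sigma_n} \le C\bigl(T, \|\varphi\|_{\sigma_n}, \ldots\bigr), \qquad \|\psi_N(t)\varphi - \psi_M(t)\varphi\|_{\sigma_n} \xrightarrow[N,M\to\infty]{} 0,
\end{equation*}
on large-probability subsets. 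To globalise, I would apply Bourgain's trick to the $\psi_N$-invariance of $\rho_N$: for any $\tau,T,R>0$,
\begin{equation*}
\rho_N\Bigl(\sup_{|t|\le T}\|\psi_N(t)\varphi\|_{\sigma_n} > R\Bigr) \lesssim \frac{T}{\tau} \rho_N\bigl(\|\varphi\|_{\sigma_n}>R\bigr),
\end{equation*}
and the right-hand side decays in $R$ by Gaussian tail / Wiener chaos bounds, producing sets $\Sigma_{T,\varepsilon}$ of $\rho$-measure $\ge 1-\varepsilon$ on which the limit flow $\psi(t)$ exists for $|t|\le T$. Intersecting and taking complements yields a $\rho$-full set of global existence.

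Invariance is then transferred by a soft argument: for $F$ bounded continuous on $X^{-1/2-}$,
\begin{equation*}
\int F(\psi(t)\varphi)\,d\rho(\varphi) = \lim_{N}\int F(\psi_N(t)\varphi)\,d\rho_N(\varphi) = \lim_N \int F\,d\rho_N = \int F\,d\rho,
\end{equation*}
where the outer equalities use dominated convergence together with the $\rho$-a.s. convergence $\psi_N \to \psi$ and the total variation convergence $\rho_N \to \rho$ (the latter is easy, as $e^{-H_p(\Pi_N\varphi)} \to e^{-H_p(\varphi)}$ in $L^1_\mu$), and the middle equality is the finite-dimensional invariance. A standard monotone class argument extends the identity to the entire topological $\sigma$-algebra. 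The main obstacle I expect is the deterministic/probabilistic multilinear analysis producing the uniform Cauchy estimates for $\psi_N$ in the very weak $X^{-1/2-}$ norms: at regularity below $1/2$ there is no classical local theory for cubic NLS on the line, and one must simultaneously exploit the randomness of the initial data (Wiener chaos and Gaussian moment bounds), the integrability of $\chi$ (space localisation \`a la Burq--G\'erard--Tzvetkov), and the smoothing cut-offs compensating for the unbounded group velocity of the Schr\"odinger dispersion relation, all while keeping the estimates compatible with the finite-dimensional truncation $\Pi_N$.
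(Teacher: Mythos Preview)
Your proposal has a genuine structural gap: the space $\Pi_N L^2(\R)$ of band-limited functions on the line is \emph{not} finite-dimensional. A frequency cut-off alone, on a non-compact domain without confining potential, still leaves an infinite-dimensional space (parametrised by the continuous variable $n\in[-N,N]$). Consequently the truncated evolution is not an ODE, global well-posedness does not follow for free, and---most damagingly---Liouville's theorem is unavailable, so your argument for the invariance of $\rho_N$ under $\psi_N$ breaks down at the very first step.

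The paper repairs this by a \emph{double} approximation: it periodises with period $2\pi N_k$ (operator $P_k$) \emph{and} truncates frequencies at $M_k$ (operator $\Pi_k$), yielding genuinely finite-dimensional spaces $E_k$ of trigonometric polynomials on the torus $N_k\T$. The approximating initial data are not $\Pi_N\varphi$ but separate periodic random variables $\phi_k$ (Definition~2.1) built from Brownian increments, inducing distinct Gaussian measures $\mu_k$ supported on $E_k$. Invariance of $\rho_k$ under $\psi_k$ then follows from the honest finite-dimensional Liouville argument (Proposition~2.16). This forces two further deviations from your outline: first, one only gets \emph{weak} convergence $\rho_k\Rightarrow\rho$, not total variation (the base measures differ), so the final invariance argument must be the Portmanteau-type sandwich on closed sets used in Section~5.2 rather than your dominated-convergence identity for bounded continuous $F$; second, the local theory must be done on tori of growing period via the Burq--G\'erard--Tzvetkov Strichartz estimates with derivative loss, and the passage back to $\R$ requires the ``approximate finite propagation speed'' estimates of Appendix~A to control the interaction between the periodisation and the spatial weights. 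Your sketch of Bourgain's globalisation trick and of the $\rho(L^2)=0$ argument are fine in spirit, but they ride on top of this missing finite-dimensional scaffolding.
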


The proof can be sketched in the following way. We build finite dimensional spaces $E_n$ such that the closure of $\bigcup_n E_n$ is $X^{-1/2-}$. We build measures $ \rho_n$ on $E_n$ such that $\rho_n$ converges weakly towards $\rho$. We introduce approaching equations whose flows are written $\psi_n(t)$ such that $E_n$ is stable under $\psi_n(t)$ and $\rho_n$ is invariant under $\psi_n(t)$. We prove that $\psi_n(t)$ is globally well-defined on the support of $\rho$. We prove that \eqref{oureq} is locally well-posed, and that $\psi_n(t)$ converges towards $\psi(t)$ locally in time. Then, we extend the local properties to global times. We deduce the invariance of the measure $\rho$ under $\psi(t)$ using these convergences.

For the local analysis, we have to prove local well-posedness on periodic settings. Because of that, we heavily rely on Strichartz estimates on compact manifolds of the work of Burq-Gérard-Tzvetkov \cite{BGTstr}. It provides a Strichartz estimate as on $\R^d$ but with loss of derivative.

To sum up, we stress on the facts that the main difference with \cite{BTTlong} is the lack of a trapping potential, and the main difference with \cite{dSito} is the absence of finite speed propagation. 

What is more, we mention the paper by Bourgain, \cite{Binvinf}, which proves the weak convergence of Schr\"odinger equation on a periodic setting of period $L$ towards a solution of the Schr\"odinger equation on $\R$ when $L$ goes to $\infty$, provided that the initial data are taken in the support of specific measures. This problem is close to ours.

\paragraph{Organisation of the paper} The next subsection sums up the notations and the assumption we make on the potential of interaction $\chi$.

In Section 2, we define $\rho_n$, $\rho$, prove that $(\rho_n)_n$ converges weakly towards $\rho$, and discuss their supports. We also deal with the linear case $\chi = 0$, and prove finite dimensional invariance.

In Section 3, we prove all the local in time properties, like local well-posedness and local convergence of $\psi_n(t)$ towards $\psi(t)$, along with continuity of the flows. In this section, we do not use the measures.

In Section 4, we extend the local properties to global times on a certain set of initial data $A$. 

In Section 5, we prove that $A$ is of full measure and that the measure $\rho$ is invariant under the flow of \eqref{oureq}.

\paragraph{Acknowledgements} The first author is supported by the FIRB 2012 "Dispersive dynamics, Fourier analysis and variational methods".

\subsection{Notations}

For the rest of the paper, we need a certain amount of notations that we fix here.

First of all, we need two real numbers $s_0 \in ]1/4, 1/2[$ and $s_\infty \in ]1/4, s_0[$. The regularity of the solutions will be some $s$ taken between $s_\infty$ and $s_0$.

We also need a real number $\alpha \in ]1, \frac32[$.

We fix some $p \in ]4, \frac1{1/2 - s_\infty}[$. By $\gamma'$, we denote $1 - \frac3p$ and by $\gamma $, we denote $\frac1{\gamma'}$. The numbers $\alpha$ and $p$ are needed in the definition of the norms used in the control of the initial data.

The norm $\|.\|_{X_*,Y_\circ}$ means
$$
\|u\|_{X_*,Y_\circ} = \| * \mapsto \|\circ \mapsto u(x,\circ) \|_Y \|_X
$$
where $*$ and $\circ$ may be replaced by time variables ($t$, $\tau$), space variables ($x$), or random events ($\omega$). The space $X_*,Y_\circ$ is the space of functions normed by $\|.\|_{X_*,Y_\circ}$.

We set $D$ the operator $\sqrt{1-\lap}$. What is more, we note $\an x = \sqrt{1+x^2}$.

We work on the following spaces, with $t_0 \in \R$, $T\leq 1$ and $s\in [s_\infty, s_0]$ and $R \in \R_+ \cup \{+ \infty\}$ : 
$$
Y_{t_0,T} (s,R) = \mathcal C^\infty ([t_0-T,t_0+T], H^s(R)) \cap L^p([t_0-T,t_0+T], L^\infty (R))
$$
with $\|\cdot\|_{H^s(R)} = \|D^s . \|_{L^2([-R,R])}$ and $L^\infty(R) = L^\infty ([-R,R])$.

By definition $Y_T(s,R) = Y_{0,T}(s,R)$ and $Y_T(s) = Y_T (s,+ \infty)$. In particular, we will prove the local well-posedness of our equations in $Y_T(s, \cdot)$.

For all $s\leq s_0$, and all $t_0 \in \R$, we write $Z_{t_0}(s)$ the space corresponding to the norm
$$
\|\cdot\|_{Z_{t_0} (s)} = \|\an{x}^{- \alpha}D^s \cdot \|_{L^p([t_0-1,t_0+1], L^2(\R))}+ \|\an{x}^{-\alpha}\cdot\|_{L^p([t_0-1,t_0+1], L^\infty (\R))}
$$
and by definition $Z(s) = Z_0 (s)$.

We also introduce the space $Z'(s)$ associated to the norm
$$
\|\cdot \|_{Z'(s)} = \|\an{t}^{- 2}\an{x}^{- \alpha}D^s \cdot \|_{L^p_t,L^2_x}+ \|\an{t}^{- 2}\an{x}^{-\alpha}\cdot\|_{L^p_t, L^\infty_x}\; .
$$

The number $p$ has been chosen such that we have the following Strichartz estimate on the torus $ N_k \T$, with $f$ a $2\pi N_k$ periodic function belonging to $H^s$ :
$$
\|L(t) f \|_{Y_T(s, \pi N_k)} \leq C \|f\|_{H^s(\pi N_k)}
$$
with a constant $C$ independent from $T \leq 1$, $k$ (because Strichartz estimates are scale invariant) and $s$ (because it is chosen in a compact $[s_\infty, s_0]$). Thanks to a $TT^*$ argument, 
$$
\|\int_{0}^t L(t-\tau) f(\tau) d\tau\|_{Y_T(s,\pi N_k)} \leq \int_{0}^t \|f(\tau)\|_{H^s(\pi N_k)} \; .
$$

We also introduce the Fourier multiplier $\Pi_k$ such that
$$
\widehat{\Pi_k f}(n) = \eta \Big(\frac{n}{M_k}\Big) \hat f (n)
$$
with $\eta$ a non negative even $\mathcal C^\infty $ function with compact support included in $[-1,1]$ and such that $\eta = 1$ on $[-1/2,1/2]$ and $M_k$ a sequence going to $\infty$. In the sequel, $M_k$ will be $k$. We write $\eta_k(n) = \eta \Big(\frac{n}{M_k}\Big)$.

Thanks to the smoothness of $\eta_k$ , we have
$$
\|\Pi_k f\|_{L^p} \leq C_p \|f\|_{L^p}
$$
for all $p$ and all $k$, including $p=\infty$. The proof is essentially contained in \cite{BGTstr}, but in this particular case, a direct proof results from the smoothness of $\eta$ and consequent estimates on its Fourier transform.

The operator $\Pi_k$ is a smooth cutoff for the high frequencies. It allows us to consider the linear Schr\"odinger flow as one with finite propagation speed in the following sense. Let $1_R$ be function equal to $1$ on $[-R,R]$ and $0$ otherwise, we have for $T \geq |t|$,
$$
\|1_R L(t) \Pi_k f\|_{L^p}  \lesssim \|1_R L(t) \Pi_k ( 1_{R+3TM_k} f)\|_{L^p} + \frac1{TM_k^2}\sup_y \|f\|_{L^p([y-R,y+R])}\; .
$$
Indeed, the Kernel of $L(t) \Pi_k$ is given by 
$$
K_t (z) = \int_{\R} e^{-in^2 t} \eta (\frac{n}{M_k}) e^{inz}dn
$$
and if $|z| > 3T M_k$, then by  a double integration by parts, we get
$$
|K_t(z) | \leq C_\eta \frac1{M_k} \frac1{z^2}
$$
and hence its $L^1$ norm where $z$ is restricted as $|z|\geq 3M_k |t|$ is less than $C_\eta \frac1{TM_k^2}$. This proof is in Appendix A.

Finally, the map $\psi_k(t)$ is the flow of the equation
\begin{equation}\label{approach}
i\partial_t u + \lap u - \Pi_k P_k (\chi |\Pi_k u|^2 \Pi_k u) = 0 \; ,
\end{equation}
the map $\psi(t)$, the flow of 
\begin{equation}\label{NLS}
i\partial_t u + \lap u - \chi |u|^2 u = 0 \; ,
\end{equation}
and 
\begin{equation}\label{psiprime}
\psi_k'(t) = \psi_k(t) - L(t)\; ,\; \psi'(t) = \psi(t) - L(t) \; .
\end{equation}

\paragraph{Assumptions on $\chi$}

We assume that for the $\alpha \in ]1,\frac32[$ and for the $s_0 \in ]\frac14,\frac12[$ defined above, there exists $C$ such that for all $x$,
$$
\chi^{1/3}(x) \leq C \an{x}^{-\alpha} \mbox{ and } |D^{s_0}\chi^{1/3}(x)| \leq C \an{x}^{-\alpha} \; .
$$

\section{Definitions and properties of measures}

In this section, we introduce the Gaussian measure along with its approximation on finite dimension. We gives properties of these measures and their supports. We prove the invariance of the Gaussian measure under the linear flow. Then, we define the invariant measure $\rho$ and a sequence of approaching measures $(\rho_k)_k$ on finite dimension. We prove that this sequence converges weakly towards $\rho$ and that $\rho_k$ is invariant under the flow $\psi_k(t)$.

\subsection{Definition and approximation of the linear measure}

In this subsection, we define the random variable $\varphi$ needed to build the measure $\mu$ invariant through the linear flow and precise into which spaces it belongs.

The random variable $\varphi$ is defined as the limit of a sequence of random variables. Let us describe this sequence.

For the rest of this paper, we call $(\Omega, \mathcal F, \mathbb P)$ a probability space and $(W_n)_{n\in \R}$ the union of two complex Brownian motions with the same initial value.

The sequence, and random variable $\varphi$ is the same as in \cite{dSito}.

\begin{definition}\label{def-phi} Let $N,M \in \N^*$. We call $\varphi_{N,M}$ the random variable defined as : 
$$
\varphi_{N,M} (\omega, x) = \sum_{k=-NM}^{NM-1} \delta_{N,k} (\omega) \frac1{\sqrt{1+\frac{k^2}{N^2}}}e^{ikx/N}
$$
where $\omega \in \Omega$ is an event of the probability space, $x\in \R$ is the space variable and $\delta_{N,k} = W_{\frac{k+1}{N}}-W_{\frac{k}{N}}$.
\end{definition}

\begin{remark}\label{rem-Glaw} This random variable is a Gaussian vector. Indeed, $\varphi_{N,M}$ is entirely determined by $2NM$ Gaussian variables $a_{-NM},\hdots, a_{NM-1}$ with $a_k= \delta_{k,N} (1+\frac{k^2}{n^2})^{-1/2}$. The law of $a_k$ is $\mathcal N(0, \frac1{N(1+ k^2/N^2)})$ and the $a_k$ are independent from each other. Hence they form a Gaussian vector whose law is given by the covariance matrix $M(N)$ such that
$$
M(N)_{i,j} = \E (\overline a_i a_j) = \beta_i^j \frac1{N(1+\frac{j^2}{N^2})}\; 
$$
where $\beta_i^j = 1$ if $i=j$ and $0$ otherwise.

Its law is given by 
$$
(\mbox{det } M(N))^{-1/2}e^{-\langle a,M(N)^{-1} a\rangle}\prod_{k= -NM}^{NM-1} \frac{da_kd\overline a_k}{2\pi} 
$$
where
$$
\langle a,M(N)^{-1}a \rangle = \sum_{k= -NR}^{NR-1} |a_k|^2 N \left( 1+\frac{k^2}{N^2}\right)
$$
can be rewritten as 
$$
\frac1{2\pi}\int_{-\pi N}^{\pi N} \overline v(x) (1-\Delta)v(x)dx
$$
with $v$ given by
$$
v(x) = \sum_{k=-MN}^{MN-1} a_k e^{ikx/N}\; .
$$
\end{remark}

\begin{proposition}Let $s<-1/2$. Let $L(t)$ be the flow of the linear Schr\"odinger equation, that is $i\partial_t u = -\lap u $. The sequence $D^s L(t)\varphi_{2^n, M}$ converges in $\an t \an x L^\infty_t, L^\infty_x, L^2_\omega$ when $n$ goes to $\infty$, uniformly in $M$. We call its limit $D^sL(t)\varphi_M$. In other words, for all $\varepsilon >0$, there exists $n_0\in \N$ such that for all $M$ and all $n\geq n_0$, 
$$
\|(\an t \an x)^{-1}\Big( D^s L(t) \varphi_M -  D^sL(t)\varphi_{2^n,M}\Big)\|_{L^\infty_t,L^\infty_x,L^2_\Omega}\leq \varepsilon \; .
$$\end{proposition}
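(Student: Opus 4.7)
The plan is to realise $D^sL(t)\varphi_{N,M}(x)$ as a deterministic Wiener integral against $dW_\xi$ and then use the It\^o/Wiener isometry to reduce the $L^2_\omega$ estimate to a Lebesgue estimate on a smooth symbol. Using $\delta_{N,k}=W_{(k+1)/N}-W_{k/N}$ and writing $k(\xi)=\ent{N\xi}$, I would rewrite
$$D^s L(t)\varphi_{N,M}(\omega,x)=\int_{-M}^{M} f_N(t,x,\xi)\,dW_\xi, \qquad f_N(t,x,\xi)=\frac{e^{-i(k(\xi)/N)^2t+ik(\xi)x/N}}{(1+(k(\xi)/N)^2)^{(1-s)/2}},$$
so that each $f_N$ is a piecewise-constant approximation, on a mesh of size $1/N$, of the smooth symbol $g(t,x,\eta)=e^{-i\eta^2 t+i\eta x}(1+\eta^2)^{-(1-s)/2}$. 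Since $f_{2^{n+1}}-f_{2^n}$ is deterministic, the Wiener isometry gives
$$\bigl\|D^s L(t)(\varphi_{2^{n+1},M}-\varphi_{2^n,M})(x)\bigr\|_{L^2_\omega}^2=\int_{-M}^{M}|f_{2^{n+1}}(t,x,\xi)-f_{2^n}(t,x,\xi)|^2\,d\xi.$$

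Next I would compare the two piecewise-constant symbols pointwise. Both $f_{2^n}(t,x,\xi)$ and $f_{2^{n+1}}(t,x,\xi)$ coincide with $g(t,x,\cdot)$ at some point within distance $2\cdot 2^{-n}$ of $\xi$, so by the mean value inequality
$$|f_{2^{n+1}}(t,x,\xi)-f_{2^n}(t,x,\xi)|\leq C\,2^{-n}\sup_{|\eta-\xi|\leq 2\cdot 2^{-n}}|\partial_\eta g(t,x,\eta)|.$$
Direct differentiation yields
$$|\partial_\eta g(t,x,\eta)|\lesssim \frac{|\eta||t|+|x|}{(1+\eta^2)^{(1-s)/2}}+\frac{|\eta|}{(1+\eta^2)^{(3-s)/2}}.$$

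After squaring and integrating in $\xi$ (and observing that for $n$ large the $\sup$ over a ball of radius $2\cdot 2^{-n}$ is comparable to its value at $\xi$), the problem reduces to the three one-dimensional integrals
$$\int_\R \eta^2(1+\eta^2)^{s-1}\,d\eta,\quad \int_\R (1+\eta^2)^{s-1}\,d\eta,\quad \int_\R \eta^2(1+\eta^2)^{s-3}\,d\eta,$$
which converge precisely because $s<-1/2$ (the first one, with behaviour $\eta^{2s}$ at infinity, is the binding constraint). Collecting the estimates yields
$$\bigl\|D^s L(t)(\varphi_{2^{n+1},M}-\varphi_{2^n,M})(x)\bigr\|_{L^2_\omega}\leq C\,\an{t}\an{x}\,2^{-n}$$
uniformly in $M$ and in $(t,x)$. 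Dividing by $\an{t}\an{x}$, taking $L^\infty$ in $(t,x)$ and summing the geometric series $\sum 2^{-n}$ show that $(D^s L(t)\varphi_{2^n,M})_n$ is Cauchy in $\an{t}\an{x}L^\infty_t L^\infty_x L^2_\omega$, uniformly in $M$, with limit $D^s L(t)\varphi_M$ satisfying the quantitative bound announced.

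The main technical obstacle is to track the weight $\an{t}\an{x}$ cleanly through the derivative estimate: the two summands in the bound on $|\partial_\eta g|$ are precisely what produce the factors $\an{t}$ (via $|\eta t|$) and $\an{x}$ (via $|x|$), and the threshold $s<-1/2$ is forced by the extra $\eta^2$ coming from differentiating the phase against $t$. Everything else is a routine consequence of Gaussianity and the smoothness of $g$.
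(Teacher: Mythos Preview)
Your argument is correct and is essentially the paper's own proof, only recast in the language of Wiener integrals: the paper writes the difference $D^sL(t)\varphi_{2^n,M}-D^sL(t)\varphi_{2^m,M}$ as a discrete sum of independent Gaussian increments $\delta_{2^n,\cdot}$ multiplied by differences of the symbol $f_{t,x,s}(n)=e^{in^2t}e^{inx}(1+n^2)^{-(1-s)/2}$ at nearby mesh points, and then uses independence to compute the $L^2_\omega$ norm, which is exactly your It\^o isometry step. Your derivative bound on $g$ is the paper's bound $|f'_{t,x,s}(n)|\le C\an t\an x(1+n^2)^{s/2}$, and your integrability check for $\int\eta^2(1+\eta^2)^{s-1}\,d\eta$ is the paper's $\int(1+y^2)^s\,dy<\infty$ for $s<-1/2$; the resulting rate $2^{-n}$ (you) or $2^{-m}$ (paper) is the same Cauchy estimate. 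The only cosmetic difference is that you compare consecutive levels and sum a geometric series, while the paper compares two arbitrary levels $n\ge m$ directly.
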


\begin{proof} We follow the same argument as in \cite{dSito} .

We take $n \geq m$. We have for all $x,t,\omega, M$ 
$$
D^s L(t) \varphi_{2^n,M}(x, \omega) - D^sL(t) \varphi_{2^m,M}(x,\omega) = \sum_{l=-2^m M}^{2^mM-1} \sum_{j=0}^{2^{n-m}-1} \delta_{2^n,2^{n-m}l+j}(\omega) \Big( f_{t,x,s}\Big(\frac{2^{n-m}l+j}{2^n}\Big) - f_{t,x,s}\Big(\frac{l}{2^m}\Big)\Big)
$$
with 
$$
f_{t,x,s}(n) = \frac{e^{in^2 t}e^{inx}}{(1+n^2)^{(1-s)/2}}  \; .
$$
This is due to the fact that 
$$
\delta_{2^m, l} = W_{\frac{l+1}{2^m}} -W_{\frac{l}{2^m}} = \sum_{j=0}^{2^{n-m}-1} W_{\frac{l}{2^m}+\frac{j+1}{2^n}}-W_{\frac{l}{2^m}+\frac{j}{2^n}} \; .
$$

The derivative of $f_{t,x,s}$ is given by
$$
f'_{t,x,s}(n) = e^{in^2t}e^{inx}\Big( \frac{2int + ix}{(1+n^2)^{(1-s)/2}} + \frac{n(s-1)}{(1+n^2)^{(3-s)/2}}\Big)
$$
from which we deduce the bound
$$
|f'_{x,t,s}(n)|\leq |s-1|(\an t \an x) (1+n^2)^{s/2}  \; .
$$
By taking the $L^2_\omega$ norm to the square of the functions we compare, we get
$$
\|D^s L(t) \varphi_{2^n,M}(x, \omega) - D^sL(t) \varphi_{2^m,M}(x,\omega)\|_{L^2_\omega}^2 \leq 2 |s-1|^2 (\an t \an x)^{2} \sum_{l=-2^mM}^{2^m M- 1} \Big(1+\frac{l^2}{2^{2m}}\Big)^{s} \sum_{j=0}^{ 2^{n-m}-1} \frac1{2^n}\frac{j^2}{2^{2n}}\; .
$$
This is due to the independence of the $\delta_{2^n, 2^{n-m}l +j}$.

We remark that
$$
 \sum_{j=0}^{ 2^{n-m}-1} \frac1{2^n}\frac{j^2}{2^{2n}}\leq 2^{-3m}
$$
thus, since 
$$
\frac1{2^m} \sum_{l= -2^m M}^{2^m M -1} \Big( 1+ \frac{l^2}{2^{2m}}\Big)^s\leq 2 \int_{\R} (1+y^2)^s dy\; ,
$$
we get
$$
\|D^s L(t) \varphi_{2^n,M}(x, \omega) - D^sL(t) \varphi_{2^m,M}(x,\omega)\|_{L^2_\omega}^2 \leq 4 \cdot 2^{-2m} |s-1|^2 (\an t \an x)^{2} \int (1+y^2)^{s} dy\; .
$$
The integral converges since $s< -1/2$. Finally by dividing by $\an t \an x$ and taking the $L^\infty_t,L^\infty_x$ norm, we get 
$$
\|(\an t \an x)^{-1}\Big(D^s L(t) \varphi_{2^n,M}(x, \omega) - D^sL(t) \varphi_{2^m,M}(x,\omega)\Big)\|_{L^\infty_t,L^\infty_x,L^2_\Omega}\leq C_s 2^{-m}
$$
which concludes the proof.
\end{proof}

\begin{proposition} The sequence $(D^sL(t)\varphi_M)_M$ converges in $\an t \an x\an t \an xL^\infty_t,L^\infty_x, L^2_\omega$. We call its limit $D^sL(t)\varphi$. \end{proposition}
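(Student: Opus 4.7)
The plan is to prove that $(D^s L(t)\varphi_M)_M$ is Cauchy in the target weighted $L^\infty_t, L^\infty_x, L^2_\omega$ space by first establishing a uniform estimate at the discretised level $\varphi_{2^n,M}$, where the Gaussian increments are independent and so orthogonality in $\omega$ can be exploited, and then passing to the limit $n\to\infty$ by means of the preceding proposition.

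The first step will be to fix $M_2>M_1\geq 1$ and, for each $n\geq 1$, write
$$
D^s L(t)\bigl(\varphi_{2^n,M_2}-\varphi_{2^n,M_1}\bigr)(x,\omega)=\sum_{M_1 2^n\leq |k|<M_2 2^n}\delta_{2^n,k}(\omega)\Bigl(1+\tfrac{k^2}{2^{2n}}\Bigr)^{(s-1)/2}e^{i(k/2^n)^2 t}e^{i(k/2^n)x}.
$$
Since the $\delta_{2^n,k}$ are independent complex centred Gaussians with $\E|\delta_{2^n,k}|^2=1/2^n$, the $L^2_\omega$-norm squared collapses the oscillations and yields, uniformly in $t,x$,
$$
\bigl\|D^s L(t)(\varphi_{2^n,M_2}-\varphi_{2^n,M_1})(t,x,\cdot)\bigr\|_{L^2_\omega}^{2}=\frac1{2^n}\sum_{M_1 2^n\leq |k|<M_2 2^n}\Bigl(1+\tfrac{k^2}{2^{2n}}\Bigr)^{s-1}.
$$
I then recognise the right-hand side as a Riemann sum for $\int_{M_1\leq |y|<M_2}(1+y^2)^{s-1}dy$. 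Because $s<-1/2$ gives $s-1<-3/2$, the integrand is integrable on $\R$, so the tail
$$
\varepsilon(M_1):=\int_{|y|\geq M_1}(1+y^2)^{s-1}dy
$$
tends to $0$ as $M_1\to\infty$. This produces the bound
$$
\bigl\|D^s L(t)(\varphi_{2^n,M_2}-\varphi_{2^n,M_1})\bigr\|_{L^\infty_t,L^\infty_x,L^2_\omega}\leq C\sqrt{\varepsilon(M_1)}
$$
with $C$ independent of $n$, $M_2$, $t$, $x$.

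In the final step I would pass to the limit $n\to\infty$. The previous proposition furnishes, for each fixed $M$, the random variable $D^s L(t)\varphi_M$ as the limit of $D^s L(t)\varphi_{2^n,M}$ in the $\an t\an x L^\infty_t,L^\infty_x,L^2_\omega$ topology. Combining this with the uniform-in-$n$ bound above via the triangle inequality in that weighted norm (the unweighted estimate is automatically stronger than the weighted one) gives
$$
\bigl\|(\an t\an x)^{-1}D^s L(t)(\varphi_{M_2}-\varphi_{M_1})\bigr\|_{L^\infty_t,L^\infty_x,L^2_\omega}\leq C\sqrt{\varepsilon(M_1)},
$$
so $(D^s L(t)\varphi_M)_M$ is Cauchy and the limit $D^s L(t)\varphi$ is well defined. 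The delicate point is orchestrating the two limits consistently: the $L^2_\omega$-orthogonality estimate must be uniform in $n$ and must carry no weight in $t,x$, since the weight is only needed to quote the $n\to\infty$ convergence of the previous proposition. Once this uniformity is secured, the remainder of the argument is essentially bookkeeping and the integrability condition $s<-1/2$ does the work.
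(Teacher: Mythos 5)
Your proposal is correct and follows essentially the argument the paper delegates to the reference \cite{dSito}: it is the exact analogue, in the parameter $M$, of the paper's own proof of the preceding proposition for the parameter $n$ (orthogonality of the independent Gaussian increments $\delta_{2^n,k}$ to collapse the $L^2_\omega$ norm, comparison of the resulting Riemann sum with the convergent integral $\int(1+y^2)^{s-1}\,dy$, and a uniform-in-$n$ tail bound combined with the already established $n\to\infty$ convergence). The orchestration of the two limits is handled correctly, since the tail estimate is uniform in $n$ and the weight $(\an t\an x)^{-1}\leq 1$ lets you pass from the unweighted bound to the weighted norm.
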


\begin{proof}We refer to \cite{dSito}. \end{proof}

\begin{definition}We call $\phi_k = \varphi_{2^k,k}$. The sequence $(\phi_k)_k$ converges towards $\varphi$ in the norm \\
$\|(\an x \an t))^{-1} D^s L(t)\cdot \|_{L^\infty_t,L^\infty_x,L^2_\Omega}$ for $s<-1/2$.\end{definition}

\begin{remark} In the rest of the paper, we call $M_k = k$ and $N_k = 2^k$. We use these notations only when $M_k$ refers to the cutoff in frequency and $N_k$ the period of some functions. \end{remark}

\subsection{Properties of \texorpdfstring{$\mu$}{mu}}

We precise the spaces to which $\varphi$ belongs.

\begin{proposition}\label{prop-belong} For all $1\leq p < \infty$ and $s<1/2$ and $t\in \R$, $L(t)\varphi$ belongs to $ L^p_\omega,W^{s,p}_{\textrm{loc},x}$ and for all $\xi \in L^p_x$, $\xi L(t)\varphi$ belongs to $L^p_\omega, L^p_x$. We also have that for all $1\leq q < \infty$, $\xi L(\tau) \varphi$ belongs to $L^q_{\textrm{loc},\tau }(\R, L^p_x)$ which we write as $L^q_{\textrm{loc},\tau }, L^p_x$ and $L(\tau)\varphi$ belongs to $L^q_{\textrm{loc},\tau},L^p_{\textrm{loc},x}$.  \end{proposition}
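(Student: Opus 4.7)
The plan is to exploit Gaussianity to reduce everything to a pointwise-in-$(t,x)$ variance bound. For each $k$ and each fixed $(t,x)$, $D^sL(t)\phi_k(x,\omega)$ is a finite linear combination of the independent centred complex Gaussian increments $\delta_{N_k,j}(\omega)$, hence a centred complex Gaussian in $\omega$. By independence its variance is the Riemann sum
\[
\|D^sL(t)\phi_k(x)\|_{L^2_\omega}^2 \;=\; \sum_{j=-N_kM_k}^{N_kM_k-1}\frac{1}{N_k}\Bigl(1+\tfrac{j^2}{N_k^2}\Bigr)^{s-1}\;\leq\; 1+\int_\R(1+y^2)^{s-1}\,dy \;=:\; C_s,
\]
and $C_s$ is finite precisely when $s<1/2$. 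Since the bound is uniform in $(t,x,k)$, the equivalence of Gaussian moments yields
\[
\|D^sL(t)\phi_k(x)\|_{L^p_\omega}\leq K_p\sqrt{C_s}\qquad\text{for every }1\leq p<\infty,
\]
uniformly in $(t,x,k)$.

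Next I would identify a pointwise limit by showing that $(D^sL(t)\phi_k(x))_k$ is Cauchy in $L^2_\omega$ at every fixed $(t,x)$. The difference is a centred complex Gaussian whose variance splits into a mesh-refinement part on the common window $|n|\leq\min(M_k,M_\ell)$, vanishing at fixed $(t,x)$ by the continuity of $n\mapsto(1+n^2)^{(s-1)/2}e^{in(x-nt)}$ on compacts, and a high-frequency tail part bounded by $\int_{|y|>\min(M_k,M_\ell)}(1+y^2)^{s-1}\,dy$, which vanishes as $k,\ell\to\infty$ because $s<1/2$. The key improvement over the proof of the previous proposition is that working pointwise in $(t,x)$ obviates the weight $(\an t\an x)^{-1}$. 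The limit, which we call $D^sL(t)\varphi(x,\omega)$, is a centred complex Gaussian with variance at most $C_s$.

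The claims of the proposition then follow by Fubini applied to the pointwise moment bound. For any bounded interval $[-R,R]$,
\[
\E\bigl\|L(t)\varphi\bigr\|_{W^{s,p}([-R,R])}^p \;=\; \int_{-R}^R \|D^sL(t)\varphi(x)\|_{L^p_\omega}^p\,dx \;\leq\; 2R\,K_p^p\,C_s^{p/2}
\]
gives the localised Sobolev claim. Replacing $\mathbf 1_{[-R,R]}$ by $|\xi|^p$ and taking $s=0$ yields
\[
\E\bigl\|\xi L(t)\varphi\bigr\|_{L^p_x}^p \;=\; \int_\R|\xi(x)|^p\,\|L(t)\varphi(x)\|_{L^p_\omega}^p\,dx \;\leq\; K_p^p\,C_0^{p/2}\,\|\xi\|_{L^p}^p.
\]
For the mixed space-time norms, the same pointwise bound together with Fubini over a bounded time window $I$ gives $\E\|\xi L(\tau)\varphi\|_{L^r_{\tau,x}(I\times\R)}^r<\infty$ for every $r\geq 1$; one then embeds $L^r_{\tau,x}(I\times\R)\hookrightarrow L^q_\tau(I,L^p_x)$ via H\"older in $\tau$ (and Minkowski in $x$ when $p<q$) for $r=\max(p,q)$, obtaining almost sure membership in $L^q_{\textrm{loc},\tau}L^p_x$.

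The main obstacle is attaining the sharp threshold $s<1/2$. The mesh-refinement estimate of the previous proposition bounded Gaussian increments by $|f'_{t,x,s}|^2\lesssim(\an t\an x)^2(1+n^2)^s$, which required $\int(1+n^2)^s\,dn<\infty$, i.e.\ $s<-1/2$. Here, controlling the tail directly through the It\^o integrand $(1+n^2)^{(s-1)/2}$ rather than its derivative relaxes the integrability constraint to $\int(1+n^2)^{s-1}\,dn<\infty$, recovering the full range $s<1/2$. The remaining manipulations are routine Fubini applications together with the equivalence of Gaussian moments.
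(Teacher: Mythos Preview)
Your approach is the one the paper points to: the paper does not give a self-contained argument but refers to \cite{dSito}, and the remark immediately after the proposition spells out exactly your mechanism, namely that the $L^r_\omega,L^p_x$ norm of $G\phi_k$ is bounded uniformly in $k$ for every $G\in L^p$, which is your pointwise variance bound $\|D^sL(t)\phi_k(x)\|_{L^2_\omega}^2\leq C_s$ combined with the equivalence of Gaussian moments and Fubini.

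There is one technical wrinkle in the mixed space--time step. You claim $\E\|\xi L(\tau)\varphi\|_{L^r_{\tau,x}(I\times\R)}^r<\infty$ by Fubini, but that Fubini computation produces $\|\xi\|_{L^r}^r$ on the right-hand side, and you only assume $\xi\in L^p$; when $r=\max(p,q)>p$ this is not given. The subsequent embedding $L^r_{\tau,x}\hookrightarrow L^q_\tau L^p_x$ ``via Minkowski in $x$ when $p<q$'' is likewise unavailable on $\R$ (infinite measure). The fix is to drop the $L^r_{\tau,x}$ detour: with $r=\max(p,q)$, use Minkowski's integral inequality twice (since $r\geq q$ and $r\geq p$) to move the $\omega$-norm inside,
\[
\|\xi L(\tau)\varphi\|_{L^r_\omega, L^q_\tau(I), L^p_x}\;\leq\;\|\xi L(\tau)\varphi\|_{L^q_\tau(I), L^p_x, L^r_\omega}\;\leq\; K_r\sqrt{C_0}\,|I|^{1/q}\,\|\xi\|_{L^p_x},
\]
which only requires $\xi\in L^p$. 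With this correction your argument is complete and coincides with the intended one.
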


\begin{proposition}For $\mathbb P$-almost every $\omega \in \Omega$, $\varphi (\omega)$ does not belong to $L^2_x$. \end{proposition}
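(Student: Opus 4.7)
The plan is to prove the stronger statement that $\|\varphi\|_{L^2(\R)}^2=+\infty$ for $\mathbb{P}$-almost every $\omega$; combined with the local $L^2$ regularity already established in Proposition \ref{prop-belong}, this forces $\varphi\notin L^2$ almost surely. My approach is a direct second-moment estimate: I will compute the expectation and variance of $\|\varphi\|_{L^2([-R,R])}^2$ explicitly, and then apply Chebyshev and Borel--Cantelli.

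First I would pass to the limit in the approximations $\phi_k$ to identify the two-point covariance of $\varphi$ as a stationary circular complex Gaussian field on $\R$. Independence of the Brownian increments gives
\[
\E[\phi_k(x)\overline{\phi_k(y)}] = \sum_{j=-N_k M_k}^{N_k M_k -1} \frac{e^{ij(x-y)/N_k}}{N_k(1+j^2/N_k^2)},
\]
which is a Riemann sum for $\int_\R \frac{e^{i\xi(x-y)}}{1+\xi^2}\,d\xi = \pi e^{-|x-y|}$, so $C(x-y):=\E[\varphi(x)\overline{\varphi(y)}] = \pi e^{-|x-y|}$; similarly $\E[\varphi(x)\varphi(y)]=0$ because complex Brownian increments are circularly symmetric. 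Fubini then gives $\E\bigl[\|\varphi\|_{L^2([-R,R])}^2\bigr] = 2\pi R$. The Isserlis formula for circular complex Gaussians yields $\mathrm{Cov}(|\varphi(x)|^2,|\varphi(y)|^2) = |C(x-y)|^2 = \pi^2 e^{-2|x-y|}$, so
\[
\mathrm{Var}\Bigl(\int_{-R}^{R}|\varphi|^2\,dx\Bigr) = \int_{-R}^{R}\!\!\int_{-R}^{R}\pi^2 e^{-2|x-y|}\,dx\,dy \le \pi^2 R \int_\R e^{-2|z|}\,dz = \pi^2 R.
\]

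Second, Chebyshev's inequality yields
\[
\mathbb{P}\Bigl(\|\varphi\|_{L^2([-R,R])}^2 \le \pi R\Bigr) \le \frac{\pi^2 R}{(\pi R)^2} = \frac{1}{R}.
\]
Specialising to $R = 2^n$ produces a summable sequence of probabilities, and Borel--Cantelli gives that for $\mathbb{P}$-almost every $\omega$, $\|\varphi(\omega)\|_{L^2([-2^n,2^n])}^2 > \pi 2^n$ for all sufficiently large $n$. Letting $n \to \infty$ shows $\|\varphi\|_{L^2(\R)} = +\infty$ almost surely, as desired.

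The main technical obstacle is justifying the covariance and Isserlis identities directly for $\varphi$: they are immediate for the finite-dimensional approximations $\phi_k$ (which depend on only finitely many independent Gaussians), and I would propagate them to $\varphi$ using the $L^2_\omega$ convergence of $\phi_k$ tested against smooth compactly supported test functions in $x$, or alternatively by recasting $\varphi$ as a genuine It\^o integral against the complex Brownian motion $W$ in the frequency variable and invoking It\^o's isometry.
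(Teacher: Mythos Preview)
Your argument is correct and self-contained; the paper itself does not give a proof here but simply refers to \cite{dSito}, so there is no in-paper argument to compare against. Your second-moment/Borel--Cantelli approach is the natural one for showing a stationary Gaussian process with nonzero variance fails to lie in $L^2(\R)$.

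Two minor remarks. First, in your variance bound the length of the $x$-interval is $2R$, not $R$, so the estimate should read $\mathrm{Var}\le 2\pi^2 R$; the Chebyshev bound then becomes $2/R$, which is of course still summable along $R=2^n$. Second, your stated technical obstacle---passing the Isserlis identity from $\phi_k$ to $\varphi$---is indeed harmless: for Gaussian vectors, convergence in $L^2_\omega$ automatically implies convergence in every $L^p_\omega$, so the fourth-moment identity transfers. Alternatively, as you note, one can write $\varphi(x)=\int_\R\langle m\rangle^{-1}e^{imx}\,dW_m$ directly as a Wiener integral (the paper does this explicitly in the proof of Proposition~\ref{prop-infty}) and read off both the covariance $\pi e^{-|x-y|}$ via It\^o isometry and the fourth moment via the chaos structure.
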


\begin{proof}We refer to \cite{dSito} for the proofs of those two propositions. Indeed, the proofs only rely on the fact that $L(t)$ is a Fourier multiplier $\widehat{L(t) f}(n) = \alpha_t(n) \widehat f (n)$ with $|\alpha_t(n)|=1$.\end{proof}

\begin{remark} We have that for all $k$ the $L^r_\omega, L^p_x$ of $G \phi_k$ is bounded independently from $k$ for all $G$ in $L^p$. This is how we get Proposition \ref{prop-belong}. \end{remark}

\begin{remark} As $\varphi$ and $\phi_k$ are Gaussian variables, we have that if $N$ is a norm on functions and if $N(\varphi)$ (resp. $N(\phi_k)$) is $\mathbb P$-almost surely finite then there exist $C$ and $a$ depending continuously in $\|N(\varphi)\|_{L^2_\omega}$ (resp. $\|N(\phi_k)\|_{L^2_\omega}$) such that 
$$
\mathbb P( N(\varphi)\geq \Lambda) \leq C e^{-a \Lambda^2} \; , \; \mathbb P( N(\phi_k)\geq \Lambda) \leq C e^{-a \Lambda^2} \; .
$$
In particular, with $s\in [s_\infty, s_0]$ and $N = \|\an{x}^{-\alpha}D^s \cdot\|_{L^2}$, $a$ is bounded from below independently from $s$ and $C$ is bounded independently from $s$ too. This is a part of Fernique's theorem, \cite{Fint} .\end{remark}

\begin{definition} We call $\mu$ the measure induced by $\varphi$ and $\mu_k$ the one induced by $\phi_k$. The $\sigma$-algebras on which $\mu$ is defined are the same as the one of the spaces to which $\varphi$ belongs. \end{definition}

\begin{remark}The sequence $\mu_k$ converges weakly towards $\mu$ in the topological $\sigma$- algebra of $\|\an{ t}^{-1} \an{ x}^{-1} D^s L(t) \cdot \|_{L^2_{t,x}} $. But $\mu$ is defined for bigger $\sigma$ algebras as the one of $\|\an{x}^{-\alpha} D^{s_0} \cdot\|_{L^2}$ or $\|\an{x}^{-\alpha}\cdot \|_{L^\infty}$. \end{remark}

\begin{proposition}\label{prop-infty} Let $p\geq 1$, $\alpha > 1$ and $\beta > 1/p$. The random variable $\langle x \rangle^{-\alpha} \langle t \rangle^{-\beta} L(t) \varphi $ belongs almost surely to $L^p_t,L^\infty_x$.\end{proposition}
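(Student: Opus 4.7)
The plan is to bound $\E\|\an{x}^{-\alpha}\an{t}^{-\beta} L(t)\varphi\|_{L^p_tL^\infty_x}^p$, since by Chebyshev a finite expectation forces almost-sure finiteness of the norm itself. By Fubini this expectation equals
$$\int_{\R} \an{t}^{-\beta p}\,\E\|\an{x}^{-\alpha} L(t)\varphi\|_{L^\infty_x}^p\,dt,$$
and $\int \an{t}^{-\beta p}\,dt$ converges exactly because $\beta p>1$. It therefore suffices to bound $\E\|\an{x}^{-\alpha}L(t)\varphi\|_{L^\infty_x}^p$ uniformly in $t\in\R$.

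For that uniform bound I would split $\R=\bigcup_{k\in\Z} I_k$ with $I_k=[k,k+1]$. On $I_k$ one has $\an{x}^{-\alpha}\lesssim \an{k}^{-\alpha}$, hence
$$\|\an{x}^{-\alpha}L(t)\varphi\|_{L^\infty_x}^p \lesssim \sum_{k\in\Z}\an{k}^{-\alpha p}\,\|L(t)\varphi\|_{L^\infty(I_k)}^p.$$
At each fixed $t$ the complex Gaussian field $x\mapsto L(t)\varphi(x)$ is stationary in $x$: by the It\^o isometry its covariance $\int_{\R} e^{in(y-x)}(1+n^2)^{-1}dn$ depends only on $y-x$, and its pseudo-covariance $\E[L(t)\varphi(x)L(t)\varphi(y)]$ vanishes by the independence of the two real Brownian motions composing $W_n$. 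Consequently $\E\|L(t)\varphi\|_{L^\infty(I_k)}^p$ is independent of $k$, and the series $\sum_k \an{k}^{-\alpha p}$ converges thanks to $\alpha p>\alpha>1$.

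It remains to bound $\E\|L(t)\varphi\|_{L^\infty(I_0)}^p$ uniformly in $t$. I would pick an auxiliary exponent $q>\max(p,2)$ and $s\in(1/q,1/2)$, and combine Sobolev embedding $W^{s,q}(I_0)\hookrightarrow L^\infty(I_0)$ with Proposition \ref{prop-belong}, which furnishes a bound on $\E\|L(t)\varphi\|_{W^{s,q}(I_0)}^q$. Since the proof of Proposition \ref{prop-belong} only uses that $L(t)$ is a unit-modulus Fourier multiplier, this bound is uniform in $t$. The Fernique-type estimate recalled in the remark just after Proposition \ref{prop-belong} then upgrades the $q$-th moment to finiteness of every moment of $\|L(t)\varphi\|_{L^\infty(I_0)}$, in particular the $p$-th.

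The only real subtlety is the bookkeeping of exponents: the Sobolev threshold $s>1/q$ must be compatible with the ceiling $s<1/2$ coming from the divergence of $\int(1+n^2)^{s-1}\,dn$, so one is forced into $q>2$ and must descend back to the desired $p$-th moment through Gaussianity. Apart from this, everything is the two-step scheme (Fubini plus stationarity in $x$), which places both threshold conditions $\alpha>1$ and $\beta>1/p$ precisely where they are needed.
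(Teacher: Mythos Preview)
Your proof is correct and follows the same overall architecture as the paper's (Fubini to separate the time integral, decomposition of $\R$ into unit intervals in space, Sobolev embedding $W^{s,q}\hookrightarrow L^\infty$ on each interval), but with one genuine simplification. To obtain the uniform-in-$k$ bound on $\E\|L(t)\varphi\|_{L^\infty(I_k)}^p$, the paper localises with a cutoff $\xi_n$, writes $D^s(\xi_n L(t)\varphi)$ as an It\^o integral, and carries out an explicit Fourier-series computation to show $\|D^s(\xi_n e^{imx})\|_{L^q}\lesssim\an{m}^s$ independently of $n$; you instead observe that the Gaussian field $x\mapsto L(t)\varphi(x)$ is stationary (covariance depends only on $x-y$, pseudo-covariance vanishes), so that these expectations coincide for all $k$, and then invoke Proposition~\ref{prop-belong} on a single interval. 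Your route is shorter and more conceptual; the paper's is fully self-contained and does not rely on the probabilistic structure beyond the It\^o isometry. One minor remark: since you chose $q>p$, the passage from the $q$-th moment to the $p$-th is just H\"older, so the appeal to Fernique is not actually needed at that step.
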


\begin{proof} We divide $\R$ as the union of the $[n,n+1]$ to get
\begin{eqnarray}\label{somme}
\|\langle x \rangle^{-\alpha} L(t) \varphi\|_{L^\infty (\R)}  & \leq & \sum_{n\in\Z} \|\langle x \rangle^{-\alpha} L(t) \varphi\|_{L^\infty([n,n+1])} \\
 & \lesssim & \sum_{n\in\Z} \langle n \rangle^{-\alpha}\| L(t) \varphi\|_{L^\infty([n,n+1])} 
\end{eqnarray}

Let $\xi$ a $\mathcal C^\infty$ function such that $\xi = 1$ on $[0,1]$ and $\xi = 0$ on the complementary set of $]-1,2[$. Let $\xi_n (x) = \xi (x-n)$, we have 
$$
\|\langle x \rangle^{-\alpha} L(t) \varphi\|_{L^\infty (\R)}   \lesssim  \sum_{n\in\Z} \langle n \rangle^{-\alpha}\| \xi_n L(t) \varphi\|_{L^\infty([n-1,n+2])} \; .
$$
We use Sobolev embedding with $s< 1/2$ and $q > 1/s$ to get
\begin{eqnarray*}
\| \xi_n L(t) \varphi\|_{L^\infty([n-1,n+2])} &\lesssim & \| \xi_n L(t) \varphi\|_{W^{s,q}([n-1,n+2])} \\
 & \lesssim & \| D^s\xi_n L(t) \varphi\|_{L^q([n-1,n+2])} \; .
\end{eqnarray*}
Taking the $L^r_\omega$ norm (in probability) of this quantity with $r=\max (q,p)$, we get thanks to \\ Minkowski inequality
$$
\| \xi_n L(t) \varphi\|_{L^r_\omega,L^\infty([n-1,n+2])} \leq \|D^s \xi_n L(t) \varphi\|_{L^q([n-1,n+2]),L^r_\omega}\; .
$$
By definition of $\varphi$, we have that $L(t) \varphi$ is the It\^o integral
$$
L(t)\varphi = \int \langle m \rangle^{-1} e^{i m^2 t }e^{imx } dW_m
$$
hence, by multiplying it by $\xi_n$ and applying $D^s$ we get
$$
D^s \xi_n L(t) \varphi = \int \langle m \rangle^{-1} e^{i m^2 t }D^s (\xi_n e^{imx }) dW_m
$$
and its $L^r_\omega$ norm is bounded by
$$
\| D^s \xi_n L(t) \varphi\|_{L^r_\omega} \leq C \sqrt r \Big( \int \langle m \rangle^{-2}| D^s (\xi_n e^{imx })|^2 dm \Big)^{1/2}\; .
$$
Taking its $L^q$ norm gives 
$$
\| D^s \xi_n L(t) \varphi\|_{L^q,L^r_\omega} \leq C \sqrt r \Big( \int \langle m \rangle^{-2}\| D^s (\xi_n e^{imx })\|_{L^q}^2 dm \Big)^{1/2}\; .
$$

In order to evaluate $\| D^s (\xi_n e^{imx })\|_{L^q}$, we consider $ \xi_n e^{imx }$ as a $3$-periodic function. We write 
$$
\xi_n (x) e^{imx} = \sum_{k \in \Z} \alpha_k e^{i2\pi k x /3} 3^{-1/2}
$$
with 
$$
\alpha_k = \int_{n-1}^{n+1} \xi_n (x) e^{imx} e^{-i2\pi kx/3} dx \; .
$$
Let $k_0(m)$ be the unique $k\in \Z$ such that $|m-2\pi k/3| < \frac \pi 3$. We have 
$$
|\alpha_{k_0(m)} | \leq 3\|\xi\|_{L^\infty}
$$
and for $k\neq k_0(m)$, thanks to a double integration by parts,
$$
|\alpha_k| \leq \frac{3\|\xi"\|_{L^\infty}}{(m-2\pi k/3)^2} \leq \frac{6  \|\xi"\|_{L^\infty}}{\langle m-2\pi k/3\rangle^2}\; .
$$
We write $D^s \xi_n e^{imx}$ as
$$
\sum_k \langle 2\pi k/3 \rangle^s \alpha_k e^{2i \pi kx/3}
$$
and take its $L^q$ norm to get
$$
\| D^s (\xi_n e^{imx })\|_{L^q} \leq \sum_k \langle 2\pi k/3 \rangle^s |\alpha_k| 3^{1/p}\; .
$$
By inputting the estimate on $\alpha_k$, we get
$$
\| D^s (\xi_n e^{imx })\|_{L^q} \leq C (\langle \frac{2\pi k_0(m)}{3}\rangle^s + \sum_k \langle \frac{2\pi k}{3}\rangle^s (\langle \frac{2\pi k}{3}-m\rangle^{-2})
$$
with a constant $C$ depending on $\xi$. Since by definition of $k_0(m)$, we have $\langle \frac{2\pi k_0(m)}{3}\rangle^s \leq C_s \langle m\rangle^s$, and because $s\geq 0$, we have  $\langle\frac{2\pi k}{3}\rangle^s\leq C_s \langle \frac{2\pi k}{3}-m\rangle^s  + \langle m\rangle^s$, we have 
$$
\| D^s (\xi_n e^{imx })\|_{L^q} \leq C \langle m \rangle^s 
$$
with $C$ depending on $s$ and $\xi$.

Finally, we get
$$
\|D^s \xi_n L(t) \varphi\|_{L^q([n-1,n+2], L^r_\omega)} \leq C \Big(  \int \langle m \rangle^{2s-2} dm \Big)^{-1/2}\; .
$$
with $C$ depending on $s$ and $\xi$ and the integral converges as $s<1/2$. By inputting this estimate in \eqref{somme} after taking the $L^r_\omega$ norm, we get
$$
\|\langle x\rangle^{-\alpha} L(t) \varphi\|_{L^r_\omega,L^\infty} \lesssim \sum_n \langle n \rangle^{-\alpha} 
$$
and the sum converges since $\alpha > 1$. We have, as $r\geq p$,
\begin{eqnarray*}
\|\langle t \rangle^{-\beta} \langle x\rangle^{-\alpha} L(t) \varphi\|_{L^r_\omega, L^p_t, L^\infty_x} &\leq & \|\langle t \rangle^{-\beta} \langle x\rangle^{-\alpha} L(t) \varphi\|_{L^p_t, L^r_\omega, L^\infty_x}\\
 & \lesssim & \|\langle t \rangle^{-\beta}\|_{L^p}
\end{eqnarray*}
and this norm is finite since $\beta > 1/p$. Therefore, the $L^r_\omega, L^p_t, L^\infty_x$ of $\langle t \rangle^{-\beta} \langle x\rangle^{-\alpha} L(t) \varphi$ is finite which means that the $L^p_t, L^\infty_x$ norm of $\langle t \rangle^{-\beta} \langle x\rangle^{-\alpha} L(t) \varphi$ is almost surely finite and concludes the proof. \end{proof}

\begin{corollary} For all $p \geq 1$, all $\alpha > 1$ and all $\beta > 1/p$, there exists $C, c > 0$ such that for all $\Lambda$
$$
\mathbb P \Big( \|\langle t \rangle^{-\beta} \langle x\rangle^{-\alpha} L(t) \varphi\|_{L^p_t,L^\infty_x} > \Lambda \Big) \leq C e^{-c\Lambda^2}\; .
$$
\end{corollary}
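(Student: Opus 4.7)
The plan is to recognize that this corollary is the Fernique-type tail estimate corresponding to the previous proposition, and to invoke the abstract principle already recorded in the paper (see the remark stating that Gaussian-induced norms which are almost surely finite enjoy sub-Gaussian tails with constants depending continuously on the $L^2_\omega$-norm).

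First, I would recall that $\varphi$ is a Gaussian random variable: it is the $L^2_\omega$ limit (in suitable weighted norms) of the sequence $\varphi_{N,M}$ defined as Itô integrals against complex Brownian motions, hence a centred Gaussian element in every space in which it lives. The operator $\langle t \rangle^{-\beta}\langle x \rangle^{-\alpha} L(t)$ is linear and deterministic, so the random field $\langle t \rangle^{-\beta}\langle x \rangle^{-\alpha} L(t)\varphi$ is Gaussian too, and the quantity
$$
N(\varphi) := \|\langle t \rangle^{-\beta}\langle x \rangle^{-\alpha} L(t)\varphi\|_{L^p_t,L^\infty_x}
$$
is a norm of this Gaussian field.

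Second, by Proposition \ref{prop-infty}, we know that $N(\varphi) < \infty$ almost surely, and in fact the proof of that proposition actually produces $\|N(\varphi)\|_{L^r_\omega} < \infty$ for some $r \geq p$ (and the bound is an explicit constant depending on $\alpha$, $\beta$, $p$). In particular $\|N(\varphi)\|_{L^2_\omega} < \infty$.

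Third, I would apply Fernique's theorem, as quoted in the remark above Definition \ref{def-phi}'s analogue for $\mu$: whenever $N$ is a norm and $N(\varphi)$ is a.s.\ finite for the Gaussian variable $\varphi$, there exist constants $C, c > 0$ (depending continuously on $\|N(\varphi)\|_{L^2_\omega}$) such that
$$
\mathbb P\bigl(N(\varphi) \geq \Lambda\bigr) \leq C e^{-c\Lambda^2}.
$$
Applying this with our specific $N$ gives exactly the claimed inequality. There is no real obstacle here; the only subtle point is confirming that $\langle t \rangle^{-\beta}\langle x \rangle^{-\alpha} L(t)\varphi$ indeed defines a measurable Gaussian element in $L^p_t L^\infty_x$, which is guaranteed by the almost sure finiteness of $N(\varphi)$ established in Proposition \ref{prop-infty}.
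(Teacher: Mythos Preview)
Your proposal is correct and follows exactly the paper's approach: the paper's proof is the single line ``The proof is a consequence of Fernique's theorem,'' and you have simply unpacked this by invoking the earlier remark recording the Fernique-type tail for almost-surely finite norms of the Gaussian $\varphi$, together with Proposition~\ref{prop-infty} for the a.s.\ finiteness.
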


\begin{proof}The proof is a consequence of Fernique's theorem, \cite{Fint}. \end{proof}

\subsection{Invariance of \texorpdfstring{$\mu$}{mu} under the linear flow}

We introduce the family of norms
\begin{equation}\label{semin}
p_{s}(f):f\rightarrow \left\|\langle t\rangle^{-2}\langle x\rangle^{-2}D^sL(t)f(x)\right\|_{L^2_tL^2_x},\qquad s\in\mathcal{A}:=\left\{-\frac12-\frac1l:l\in\mathbb{N}^*\right\}.
\end{equation}

Since this family is countable ($\mathcal{A}$ is countable), the metric $d$ given by
\begin{equation}\label{distance}
d(u,v)=\sum_{l\in\mathbb{N}^*}2^{-l}\frac{p_{1/2-1/l}(u-v)}{1+p_{1/2-1/l}(u-v)}
\end{equation}
is equivalent to the topology induced by the norms $p_{q,s}$. We call $X^{-1/2-}$ the functional space equipped with that metric.

\paragraph{Approximation of $\mu$ and invariance under the linear flow}

We here mean to show the invariance of the measures $\mu_k$ and $\mu$ under the linear flow.

\begin{definition}
We denote with $\mu^t$ (resp. $\mu_k^t$) the image measures of $\mu$ (resp. $\mu_k)$ under the flow $L(t)$, that is for all measurable set $A\subset X^{-1/2-}$ 
\begin{equation*}
\mu^t(A)=\mu(L(t)^{-1}A)
\end{equation*}
\begin{equation*}
\mu_k^t(A)=\mu_k(L(t)^{-1}A).
\end{equation*}
\end{definition}

We now prove the invariance of these measures under the linear flow.
\begin{theorem}\label{invariance}
The measures $\mu_k$ and $\mu$ are invariant under the flow $L(t)$.
\end{theorem}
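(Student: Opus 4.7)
The plan is to handle $\mu_k$ first by direct rotation-invariance of complex Gaussians and then transfer the invariance to $\mu$ using the weak convergence $\mu_k\to\mu$ and the continuity of $L(t)$ on $X^{-1/2-}$.

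First I would treat the finite-dimensional measures $\mu_k$. The random variable $\phi_k=\varphi_{2^k,k}$ is a finite Fourier sum in the modes $e^{ijx/2^k}$ with independent centered complex Gaussian coefficients $\delta_{2^k,j}$ (Remark \ref{rem-Glaw}). Since $L(t)$ acts as the Fourier multiplier $e^{ijx/2^k}\mapsto e^{-i(j/2^k)^2 t}e^{ijx/2^k}$, one has
$$
L(t)\phi_k=\sum_{j=-2^k k}^{2^k k-1}\bigl(e^{-i(j/2^k)^2t}\delta_{2^k,j}\bigr)\frac{1}{\sqrt{1+j^2/2^{2k}}}e^{ijx/2^k}.
$$
The rotated coefficients $e^{-i(j/2^k)^2 t}\delta_{2^k,j}$ remain independent centered complex Gaussians with unchanged variance, so the full Gaussian vector of Fourier coefficients has the same law as that of $\phi_k$. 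This identifies the laws of $\phi_k$ and $L(t)\phi_k$, hence $\mu_k^t=\mu_k$.

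Next I would check that $L(\tau)$ is continuous on $X^{-1/2-}$. Using $L(t)L(\tau)=L(t+\tau)$, the change of variable $t'=t+\tau$ and the elementary inequality $\langle t'-\tau\rangle^{-1}\leq\langle\tau\rangle\langle t'\rangle^{-1}$, one obtains for every $s\in\mathcal{A}$
$$
p_s(L(\tau)f)=\bigl\|\langle t\rangle^{-2}\langle x\rangle^{-2}D^sL(t+\tau)f\bigr\|_{L^2_{t,x}}\leq \langle\tau\rangle^2\,p_s(f),
$$
which shows that $L(\tau)$ is a bounded, hence continuous, linear operator on $X^{-1/2-}$ equipped with the metric $d$ from \eqref{distance}.

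Finally I would combine these two facts with the weak convergence of $\mu_k$ to $\mu$ (stated in the remark following the definition of $\mu$). Continuity of $L(t)$ implies that the push-forward map $\nu\mapsto L(t)_*\nu$ is continuous for the weak topology on Borel probability measures of $X^{-1/2-}$; thus $\mu_k^t\to\mu^t$ weakly. On the other hand, Step 1 gives $\mu_k^t=\mu_k$, and by hypothesis $\mu_k\to\mu$ weakly. By uniqueness of weak limits one concludes $\mu^t=\mu$, which is the sought invariance.

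The main obstacle I anticipate is ensuring that the weak convergence $\mu_k\to\mu$ is taken against the test functions for which one wants to apply the push-forward argument, i.e.\ that the topology on $X^{-1/2-}$ is compatible both with the continuity of $L(t)$ and with the $\sigma$-algebra on which the measures are defined. Once it is verified that $X^{-1/2-}$ (metrized by $d$) is a separable metric space carrying the Borel $\sigma$-algebra generated by the seminorms $p_s$, the portmanteau-style argument above goes through without further difficulty.
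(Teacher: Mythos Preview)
Your proposal is correct and follows essentially the same route as the paper: rotation-invariance of the complex Gaussian coefficients gives $\mu_k^t=\mu_k$, continuity of $L(t)$ on $X^{-1/2-}$ is obtained by the same change of variable $t\mapsto t+\tau$ and the inequality $\langle t-\tau\rangle^{-1}\le\langle\tau\rangle\langle t\rangle^{-1}$, and the passage from $\mu_k$ to $\mu$ uses the weak convergence $\mu_k\to\mu$. The only difference is cosmetic: the paper unpacks the last step as an explicit portmanteau argument on closed sets (first proving $\mu^t(K)\le\mu(K)$ and then invoking the reversibility of $L(t)$ for the reverse inequality), whereas you cite the continuous mapping theorem and uniqueness of weak limits directly---your packaging is slightly cleaner and avoids the separate appeal to reversibility.
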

\begin{proof}
We begin with the approximating measures $\mu_k$. The random variable $\phi_k$ can be seen as a Gaussian vector $(x_1,...,x_{N_k})$ where $x_1,...,x_{N_k}$ are independent and $\mathbb{E}(x_i)=0$ (see Remark 0.1). Applying $L(t)$ to it yields
\begin{equation*}
\left(e^{i\alpha_1t}x_1,...,e^{i\alpha_{N_k}t}x_{N_k}\right)
\end{equation*}
with $\alpha_i\in\mathbb{R}$. The Gaussian variable $e^{i\alpha_jt}x_j$ has the same law of $x_j$ and is independent from all the other $x_i$, hence the law of $\phi_k$ is the same as the law of $L(t)\phi_k$. In other words, the measure $\mu_k$ is invariant under $L(t)$.

We now turn to $\mu$: we therefore need to show that for every measurable set $A\subset X^{-1/2-}$,  $\mu(L(t)^{-1}A)=\mu(A)$. We need two preliminary results: first of all we show the following approximating property.

\begin{lemma}\label{applem}
For all open set $U\subset X^{-1/2-}$ and closed set $F\subset X^{-1/2-}$ we have
\begin{equation}\label{appr}
\mu(U)\leq \liminf_{k\rightarrow+\infty}\mu_k(U),\qquad
\mu(F)\geq \limsup_{k\rightarrow+\infty}\mu_k(F).
\end{equation}
\end{lemma}
\begin{proof}
This consists in the weak convergence of the measures $\mu_k$ towards $\mu$. To prove the weak convergence, it is enough to show that
\begin{equation*}
\mathbb{E}(p_{1/2-1/l}(\varphi-\phi_k))\underset{k\rightarrow\infty}\longrightarrow0.
\end{equation*}
In fact we have
\begin{eqnarray*}
\mathbb{E}(p_{1/2-1/l}(\varphi-\phi_k))&\leq&\|p_{1/2-1/l}(\varphi-\phi_k)\|_{L^2(\Omega)}
\\
&\leq&
\left\|\langle\tau\rangle^{-2}\langle x\rangle^{-2}D^sL(\tau)(\varphi-\phi_k)\right\|_{L^2_\tau L^2_xL^2(\Omega)}
\\
&\leq&
\|D^sL(\tau)(\varphi-\phi_k)\|_{L^\infty_\tau L^\infty_xL^2(\Omega)}.
\end{eqnarray*}
Since $D^s L(t)\phi_k$ converges towards $D^sL(t)\varphi$ in $\langle x\rangle \langle t\rangle L^\infty_tL^\infty_xL^2_\omega$ and thus in $\langle x\rangle^2 \langle t\rangle^2 L^\infty_tL^\infty_xL^2_\omega$, the proof is concluded.
\end{proof}

Then we need the continuity of the linear flow on $X^{-1/2-}$.
\begin{lemma}\label{contl}
The flow $L(t)$ is continuous on $X^{1/2-}$.
\end{lemma}
\begin{proof}
We prove that for every for every $s\in\mathcal{A}$ there exists a constant $C=C(t)$ such that
\begin{equation}\label{targ}
p_s(L(t)f)\leq C(t)p_s(f).
\end{equation}
By definition \eqref{semin} and the change of variable $\tau\rightarrow\tau-t$  we have, recalling that $\langle\tau\rangle\leq\langle t\rangle\langle\tau-t\rangle$,
\begin{eqnarray*}
p_s(L(t)f)&=&\left(\int\int\langle\tau\rangle^{-4}\langle x\rangle^{-4}|D^sL(t+\tau)f(x)|^2d\tau dx\right)^{1/2}
\\
&=&
\left(\int\int\langle\tau-t\rangle^{-4}\langle x\rangle^{-4}|D^sL(\tau)f(x)|^2d\tau dx\right)^{1/2}
\\
&\leq&
\left(\int\int\left(\frac{\langle t\rangle}{\langle\tau\rangle}\right)^{4}\langle x\rangle^{-4}|D^sL(\tau)f(x)|^2d\tau dx\right)^{1/2}
\\
&\leq&
C(t)p_s(f)
\end{eqnarray*}
and the proof is concluded.

\end{proof}
We now return to the proof of Theorem \ref{invariance}; the argument will now follow closely some previous works such as \cite{Binvkdv,Zoni}, anyway, we include it here for the seek of completeness.

Let $K\subset X^{-1/2-}$ be a closed set, and
\begin{equation*}
K_\epsilon:=\{u\in X^{-1/2-}:\exists v\in K: d(u,v)<\epsilon\}
\end{equation*}
where the distance $d$ is given in \eqref{distance}. Since $K_\epsilon\subset X^{-1/2-}$ is open and $L(t)$ is continuous in $X^{-1/2-}$ (see Lemma \ref{contl}), the set $L(t)^{-1}K_\epsilon$ is open, too. Lemma \ref{applem} then implies
\begin{equation*}
\mu^t(K)\leq \mu^t(K_\epsilon)\leq \liminf_{k\rightarrow\infty}\mu^t_k(K_\epsilon).
\end{equation*}
Using the first part of the Theorem (invariance of $\mu_k$) then yields
\begin{equation*}
\mu^t(K)\leq\liminf_{k\rightarrow\infty}\mu_k(K_\epsilon)\leq\limsup_{k\rightarrow+\infty}\mu_k(\overline{K_\epsilon}),
\end{equation*}
where $\overline{K_\epsilon}$ is the closure of $K_\epsilon$. Lemma \ref{applem} implies again ($\overline{K_\epsilon}$ is closed)
\begin{equation*}
\mu^t(K)\leq\mu(\overline{K_\epsilon})
\end{equation*}
which in turns implies
\begin{equation*}
\mu^t(K)\leq \inf_{\epsilon>0}\mu(\overline{K_\epsilon}).
\end{equation*}
The dominated convergence theorem for $\epsilon\rightarrow 0$ then implies
\begin{equation*}
\mu^t(K)\leq\mu(K).
\end{equation*}
Since now $L(t)$ is continuous on $X^{-1/2-}$, we use reversibility of the flow to conclude that $L(t)^{-1}(K)$ is closed and hence
\begin{equation*}
\mu(K)=\mu^{-t}(L(t)^{-1}K)\leq \mu(L(t)^{-1}K)=\mu^t(K).
\end{equation*}
For all closed sets $K$ we have therefore proved that $\mu^t(K)=\mu(K)$; since closed sets in $X^{-1/2-}$ generate its topological $\sigma$-algebra, the proof is concluded.
\end{proof}

\subsection{Definition and approximation of the non linear measure}

In this subsection, we define the invariant measure $\rho$, the approaching sequence $(\rho_k)_k$ and prove that the sequence converges weakly towards $\rho$.

\begin{definition} We set 
$$
f_k(u) = e^{-\frac1{2\pi}\int_{-\pi N_k}^{\pi N_k}\chi |\Pi_k u|^4} \; , \; f(u) = e^{-\frac1{2\pi}\int_{\R}\chi |u|^4} 
$$
and we define the measures $\rho$ and $\rho_k$ as 
$$
d\rho_k(u) = \frac1{J_k} f_k(u) d\mu_k(u) \; , \; d\rho(u) = \frac1J f(u) d\mu(u)
$$
with $J_k = \|f_k\|_{L^1_{\mu_k}}$ and $J = \|f\|_{L^1_\mu}$.
\end{definition}

The rest of this subsection is dedicated to the proof of the weak convergence of $\rho_k$ towards $\rho$ in the topological $\sigma$-algebra of $X^{-1/2-}$.

\begin{lemma} \label{lem-cvm1}For all $k$ and all $j$, we have 
$$
\|\chi^{1/2} (1-\Pi_k) \varphi \|_{L^2_\omega,L^2_x} \lesssim \|\chi\|_{L^1}^{1/2} \frac1{\sqrt{M_k}} \; , \; \|\chi^{1/2} (1-\Pi_k) \phi_j \|_{L^2_\omega,L^2_x} \lesssim \|\chi\|_{L^1}^{1/2} \frac1{\sqrt{M_k}}\; .
$$
\end{lemma}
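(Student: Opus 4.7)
The plan is to reduce both inequalities to computing the second moment pointwise in $x$ and then integrating against $\chi$. The random variable $(1-\Pi_k)\varphi$ is still Gaussian, given by the Itô integral
\[
(1-\Pi_k)\varphi(x) = \int_\R (1-\eta_k(m))\langle m\rangle^{-1} e^{imx}\, dW_m,
\]
so by Itô isometry (using that $W$ is the union of two independent Brownian motions),
\[
\E|(1-\Pi_k)\varphi(x)|^2 \lesssim \int_\R (1-\eta_k(m))^2 \langle m\rangle^{-2}\, dm.
\]
Because $\eta_k(m)=1$ on $|m|\leq M_k/2$, the integrand is supported in $|m|\geq M_k/2$, and there $\langle m\rangle^{-2}\lesssim m^{-2}$, so the integral is $\lesssim 1/M_k$. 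The bound is independent of $x$, so Fubini gives
\[
\|\chi^{1/2}(1-\Pi_k)\varphi\|_{L^2_\omega,L^2_x}^2 = \int_\R \chi(x)\, \E|(1-\Pi_k)\varphi(x)|^2\, dx \lesssim \frac{\|\chi\|_{L^1}}{M_k},
\]
which is the first inequality.

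For the discrete approximant $\phi_j=\varphi_{N_j,M_j}$, I would argue in the same spirit but in Fourier series form. Since $\phi_j$ is a finite sum of independent Gaussians indexed by the lattice $l/N_j$, $l=-N_jM_j,\dots,N_jM_j-1$, applying $1-\Pi_k$ multiplies each mode by $1-\eta_k(l/N_j)$. Using $\E|\delta_{N_j,l}|^2\lesssim 1/N_j$ and independence of the $\delta_{N_j,l}$,
\[
\E|(1-\Pi_k)\phi_j(x)|^2 \;\lesssim\; \sum_l \frac{(1-\eta_k(l/N_j))^2}{N_j(1+l^2/N_j^2)}.
\]
The right-hand side is a Riemann sum for $\int (1-\eta_k(y))^2(1+y^2)^{-1}\,dy$, which is again $\lesssim 1/M_k$ by the same support argument as above. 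As this bound is uniform in $x$ (and in $j$), integrating against $\chi$ and taking the square root yields the claimed estimate.

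The main technical point to watch is the Riemann-sum comparison in the discrete case: one needs that the sum is controlled by the integral uniformly in $j$, which follows from the monotonicity and smoothness of $y\mapsto (1-\eta_k(y))^2/(1+y^2)$. Everything else is a one-line application of the Itô isometry (or independence of the $\delta_{N_j,l}$) together with Fubini, so no deeper input is required beyond the Gaussian/Itô structure of $\varphi$ and $\phi_j$ already recorded in Remark~0.1.
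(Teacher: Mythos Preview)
Your proof is correct and follows essentially the same approach as the paper's: compute the pointwise $L^2_\omega$ norm via the It\^o isometry (resp.\ independence of the $\delta_{N_j,l}$), observe that the resulting integral/sum is supported in $|m|\gtrsim M_k$ and hence is $\lesssim 1/M_k$ uniformly in $x$, and then integrate against $\chi$. The only cosmetic difference is that the paper first passes to $\|(1-\Pi_k)\varphi\|_{L^\infty_x,L^2_\omega}$ before multiplying by $\|\chi\|_{L^1}^{1/2}$, whereas you use Fubini directly; also, for the Riemann-sum comparison it suffices to bound $(1-\eta_k)^2\le 1$ and use the monotonicity of $(1+y^2)^{-1}$ rather than of the full integrand.
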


\begin{proof} For $\varphi$, we have 
$$
\|\chi^{1/2} (1-\Pi_k) \varphi \|_{L^2_\omega,L^2_x} \leq \|\chi\|_{L^1}^{1/2} \|(1-\Pi_k) \varphi\|_{L^\infty_x, L^2_\omega} \; .
$$
As $(1-\Pi_k) \varphi$ is given by
$$
(1-\Pi_k)\varphi(x) = \int (1-\eta_k) (n) \frac{e^{inx}}{\sqrt{1+n^2}} dW_n(\omega)
$$
we have 
$$
\|(1-\Pi_k)\varphi(x)\|_{L^2_\omega}^2 = \int |(1-\eta_k) (n)|^2 \frac{1}{1+n^2} dn 
$$
and given the definition of $\eta_k$
$$
\|(1-\Pi_k)\varphi(x)\|_{L^2_\omega}^2 \lesssim \frac1{M_k} \; .
$$

For $\phi_j$, we use the same argument with a sum instead of an integral
$$
(1-\Pi_k) \phi_j (x) = \sum_{l = -N_j M_j}^{N_j M_j -1 } (1-\eta_k(j/N_j)) \frac{e^{ijx/N_j}}{\sqrt{1+(j/N_j)^2}} \delta_{N_j, l} 
$$
hence 
$$
\|(1-\Pi_k) \phi_j (x)\|_{L^2_\omega} = \sum_{l = -N_j M_j}^{N_j M_j -1 } |(1-\eta_k(j/N_j))|^2 \frac{1}{1+(j/N_j)^2} \frac1{N_j} \lesssim \frac1{M_k}\; .
$$\end{proof}

\begin{lemma}\label{lem-cvm2} The sequence $\|\chi^{1/2} (\varphi - \phi_k)\|_{L^2_\omega, L^2_x}$ goes to $0$ when $k$ goes to $\infty$. \end{lemma}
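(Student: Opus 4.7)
The plan is to insert the smooth frequency cutoff $\Pi_k$ and write
\begin{equation*}
\varphi - \phi_k = (1-\Pi_k)\varphi + (\Pi_k\varphi - \Pi_k\phi_k) + (\Pi_k\phi_k - \phi_k).
\end{equation*}
By Lemma \ref{lem-cvm1}, the first and third terms contribute $O(M_k^{-1/2}) = O(k^{-1/2})$ to the $\|\chi^{1/2}\cdot\|_{L^2_\omega,L^2_x}$ norm and thus tend to $0$. Hence it suffices to estimate the cross term $\|\chi^{1/2}(\Pi_k\varphi - \Pi_k\phi_k)\|_{L^2_\omega,L^2_x}$; this is where the real work is.

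The key point is that, after applying $\Pi_k$, both $\varphi$ and $\phi_k$ can be written as sums of It\^o integrals over the common partition of $\R$ with stepsize $1/N_k$. Setting $h_n(x) := \eta_k(n) e^{inx}/\sqrt{1+n^2}$, the definitions of $\varphi$ and $\phi_k$ give
\begin{equation*}
\Pi_k\varphi(x) - \Pi_k\phi_k(x) = \sum_{l\in\Z} \int_{l/N_k}^{(l+1)/N_k} \bigl( h_n(x) - h_{l/N_k}(x) \bigr)\, dW_n,
\end{equation*}
(the sum is effectively finite, since $\eta_k$ is supported in $[-M_k,M_k]$). The It\^o isometry together with the independence of the Brownian increments on disjoint intervals then yields
\begin{equation*}
\|\Pi_k\varphi(x) - \Pi_k\phi_k(x)\|_{L^2_\omega}^2 = \sum_l \int_{l/N_k}^{(l+1)/N_k}|h_n(x) - h_{l/N_k}(x)|^2\, dn.
\end{equation*}

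It remains only to estimate the integrand pointwise. Differentiating $h_n$ in $n$ and using $|\eta_k'| \lesssim 1/M_k \leq 1$, I get $|\partial_n h_n(x)| \lesssim (1+|x|)(1+n^2)^{-1/2}$ on the support of $\eta_k$. The mean value theorem then produces $|h_n(x) - h_{l/N_k}(x)|^2 \lesssim (1+|x|)^2 / (N_k^2 (1+n^2))$, and summing on $l$ amounts to integrating $(1+n^2)^{-1}$ over $\R$, which is $\pi$. Therefore
\begin{equation*}
\|\Pi_k\varphi(x) - \Pi_k\phi_k(x)\|_{L^2_\omega}^2 \lesssim \frac{(1+|x|)^2}{N_k^2}.
\end{equation*}
Multiplying by $\chi(x)$ and integrating in $x$, and using $\chi(x) \lesssim \an{x}^{-3\alpha}$ together with the assumption $\alpha>1$ (so that $\int \chi(x)(1+|x|)^2\,dx < \infty$), I obtain the cross term bound $\lesssim 1/N_k = 2^{-k}$. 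Combined with the $O(k^{-1/2})$ contribution of the outer two terms, this proves the convergence to zero. The only mild subtlety is correctly identifying $\Pi_k\varphi$ and $\Pi_k\phi_k$ as It\^o integrals over the same partition so that the isometry can be applied cleanly; once that is in hand, the estimates are entirely routine.
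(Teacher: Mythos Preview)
Your argument is correct. The decomposition via $\Pi_k$, the application of Lemma~\ref{lem-cvm1} to the outer pieces, and the direct It\^o-isometry computation for the middle piece all go through as written; the mean-value estimate on $\partial_n h_n$ and the integrability of $\chi(x)(1+|x|)^2$ (which holds since $\chi \lesssim \an{x}^{-3\alpha}$ with $3\alpha-2>1$) are exactly what is needed.

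The paper's own proof is rather different and much shorter: it simply invokes the convergence $\phi_k \to \varphi$ in $\an{x} L^\infty_x, L^2_\omega$ (quoted from \cite{dSito}) and then bounds
\[
\|\chi^{1/2}(\varphi-\phi_k)\|_{L^2_\omega,L^2_x} \leq \|\an{x}\chi^{1/2}\|_{L^2_x}\,\|\an{x}^{-1}(\varphi-\phi_k)\|_{L^\infty_x,L^2_\omega},
\]
checking that $\an{x}\chi^{1/2}\in L^2$ because $3\alpha/2-1>1/2$. So the paper treats the convergence as a black box from an earlier reference, while you give a self-contained argument that reuses Lemma~\ref{lem-cvm1} and reproduces, in effect, the $t=0$, $s=0$ case of the Cauchy estimate already proved for $D^sL(t)\varphi_{2^n,M}$. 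Your route is longer but avoids the external citation and makes the rate explicit ($O(k^{-1/2})$ from the frequency tails, $O(2^{-k})$ from the cross term); the paper's route is quicker but opaque unless one unpacks the cited result.
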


\begin{remark} The loss of regularity in the convergence in $d$ is due to the evolution in time. \end{remark}

\begin{proof}Due to \cite{dSito}, we know that $\phi_k$ converges to $\varphi$ in $\an{x} L^\infty_x , L^2$. We have 
$$
\|\chi^{1/2} (\varphi - \phi_k)\|_{L^2_\omega, L^2_x} \leq \|\an x \chi^{1/2}\|_{L^2_x} \|\an{x}^{-1} (\varphi - \phi_k)\|_{L^\infty, L^2_\omega}\; .
$$
As $\chi$ is less than $\an{x}^{-3 \alpha}$ with $\alpha > 1$, we have that $\an x \chi^{1/2}$ is less than $\an{x}^{- (3\alpha/2 -1)}$, and since $3\alpha/2 -1 > 1/2$, $\an x \chi^{1/2}$ belongs to $L^2$.
\end{proof}

\begin{lemma} \label{lem-cvm3} The sequences $\|f-f_k\|_{L^1_{\mu_k}} $ and $\|f-f_k\|_{L^1_\mu}$ go to $0$ when $k$ goes to $\infty$. \end{lemma}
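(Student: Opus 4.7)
My plan is to reduce the claim to an $L^2$ convergence statement already established in Lemma \ref{lem-cvm1}, using the elementary inequality $|e^{-a}-e^{-b}|\leq|a-b|$ valid for $a,b\geq 0$. This applies since $\chi\geq 0$ makes both exponents non-negative, giving the pointwise-in-$u$ bound
$$
|f(u) - f_k(u)| \leq \frac{1}{2\pi}\Big(\underbrace{\int_{\R} \chi\big||u|^4 - |\Pi_k u|^4\big|dx}_{I_1(u)} \;+\; \underbrace{\int_{|x|>\pi N_k}\chi|\Pi_k u|^4 dx}_{I_2(u)}\Big).
$$
It then suffices to show $\mathbb{E}[I_1(u)]\to 0$ and $\mathbb{E}[I_2(u)]\to 0$ for $u=\varphi$ (which yields the $L^1_\mu$ statement) and for $u=\phi_k$ (which yields the $L^1_{\mu_k}$ statement).

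For $I_1$ I would use the algebraic bound $\big||a|^4-|b|^4\big|\leq 4(|a|^3+|b|^3)|a-b|$ with $a=u,\ b=\Pi_k u$, factorize $\chi=\chi^{1/2}\cdot\chi^{1/2}$, and apply Cauchy--Schwarz in $x$, then in $\omega$, to obtain
$$
\mathbb{E}[I_1(u)] \lesssim \big\|\chi^{1/2}(|u|^3+|\Pi_k u|^3)\big\|_{L^2_\omega L^2_x}\cdot\big\|\chi^{1/2}(1-\Pi_k)u\big\|_{L^2_\omega L^2_x}.
$$
The second factor is $O(M_k^{-1/2})$ by Lemma \ref{lem-cvm1}, uniformly in the two choices $u\in\{\varphi,\phi_k\}$. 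For the first factor I would write $\big\|\chi^{1/2}|u|^3\big\|_{L^2_\omega L^2_x}^2 = \int\chi(x)\,\mathbb{E}[|u(x)|^6]\,dx$ and invoke the fact that $u(x)$ is a (complex) Gaussian variable whose variance is uniformly bounded: $\mathbb{E}|\varphi(x)|^2=\int(1+n^2)^{-1}dn=\pi$, while $\mathbb{E}|\phi_k(x)|^2$ is a Riemann sum for the same integral, uniformly bounded in $k$; the same holds for $\Pi_k u$ since the Fourier multiplier $\eta_k$ satisfies $0\leq\eta_k\leq 1$. By Wick's formula $\mathbb{E}|u(x)|^6\leq C$ uniformly in $x,k$, and combining with $\chi\in L^1(\R)$ (which follows from the hypothesis $\chi\lesssim\an x^{-3\alpha}$ with $\alpha>1$) gives a uniform bound on the first factor.

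For $I_2$, Fubini yields $\mathbb{E}[I_2(u)] = \int_{|x|>\pi N_k}\chi(x)\,\mathbb{E}|\Pi_k u(x)|^4\,dx \lesssim \int_{|x|>\pi N_k}\chi(x)\,dx$, which tends to $0$ as $k\to\infty$ because $\chi\in L^1$ and $N_k=2^k\to\infty$. Putting the two contributions together concludes both convergence statements. The only point requiring some care---rather than a genuine obstacle---is the verification of the uniform second-moment bound $\sup_{x,k}\mathbb{E}|\phi_k(x)|^2<\infty$, which is where the precise form of $\phi_k$ (as a tempered Riemann-sum approximation of the Itô integral defining $\varphi$) is used; once this is in place, Gaussianity promotes it to a uniform sixth-moment bound and the remainder of the argument is a straightforward chain of Cauchy--Schwarz inequalities.
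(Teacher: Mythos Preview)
Your proof is correct and follows essentially the same approach as the paper's: bound $|f-f_k|$ by the difference of the exponents, split into a tail term and a $|u|^4-|\Pi_k u|^4$ difference term, handle the latter via Cauchy--Schwarz together with Lemma~\ref{lem-cvm1}, and handle the former via the integrability of $\chi$. The only cosmetic differences are that the paper puts the tail on $|u|^4$ rather than on $|\Pi_k u|^4$ and controls it by H\"older ($\|\chi^{1/2}\|_{L^2(|x|\geq\pi N_k)}\|\chi^{1/2}|u|^4\|_{L^2_x}$) instead of your direct Fubini-plus-pointwise-moments argument; your explicit justification of the uniform sixth-moment bound via Gaussianity is a welcome detail that the paper leaves implicit.
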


\begin{proof}We use that
$$
\Big| f(u) - f_k(u) \Big| \leq \int_{|x|\geq \pi N_k} \chi |u|^4 + \|\chi^{1/2} (u- \Pi_k u)\|_{L^2_x} (\|\chi^{1/2} |u|^3\|_{L^2_x} + \|\chi^{1/2} |\Pi_k u|^3\|_{L^2_x})\; .
$$
For the first term, we use H\"older inequality,
$$
\int_{|x|\geq \pi N_k} \chi |u|^4  \leq \|\chi^{1/2}\|_{L^2(|x|\geq \pi N_k)} \|\chi^{1/2} |u|^4\|_{L^2_x}
$$
and the fact that 
$$
\|\chi^{1/2} |u|^4\|_{L^1_{\mu_k},L^2_x}
$$
 is finite and bounded uniformly in $k$. The sequence $ \|\chi^{1/2}\|_{L^2(|x|\geq \pi N_k)}$ goes to $0$ when $k$ goes to $\infty$.

For the second one, we use that 
$$ 
(\|\chi^{1/2} |u|^3\|_{L^2_{\mu_k},L^2_x} + \|\chi^{1/2} |\Pi_k u|^3\|_{L^2_{\mu_k},L^2_x})
$$
is finite and uniformly bounded in $k$, that
$$ 
(\|\chi^{1/2} |u|^3\|_{L^2_{\mu},L^2_x} + \|\chi^{1/2} |\Pi_k u|^3\|_{L^2_{\mu},L^2_x})
$$
satisfies the same properties and that
$$
\|\chi^{1/2} (u- \Pi_k u)\|_{L^2_{\mu_k},L^2_x } \; , \; \|\chi^{1/2} (u- \Pi_k u)\|_{L^2_{\mu},L^2_x }
$$
converge towards $0$ thanks to Lemma \ref{lem-cvm1}.
\end{proof}

\begin{lemma}\label{lem-cvm4} The sequence $f\circ \phi_k$ converges towards $f\circ \varphi$ in $L^1_\omega$. \end{lemma}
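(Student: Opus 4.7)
The plan is to use the elementary bound $|e^{-a}-e^{-b}|\le |a-b|$, valid since both exponents $\tfrac{1}{2\pi}\int\chi|\phi_k|^4$ and $\tfrac{1}{2\pi}\int\chi|\varphi|^4$ are non-negative. This reduces the claim to showing
\[
\E\Big|\int_\R \chi(x)\bigl(|\phi_k(x)|^4-|\varphi(x)|^4\bigr)\,dx\Big|\xrightarrow[k\to\infty]{}0,
\]
which I will bound by the integral of the absolute value.

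Next I would factor the pointwise difference of fourth powers. Writing $|u|^4-|v|^4=(|u|^2-|v|^2)(|u|^2+|v|^2)$ and using $\bigl||u|^2-|v|^2\bigr|\le|u-v|\,(|u|+|v|)$, one obtains
\[
\bigl||\phi_k|^4-|\varphi|^4\bigr|\le |\phi_k-\varphi|\,(|\phi_k|+|\varphi|)\,(|\phi_k|^2+|\varphi|^2).
\]
Splitting $\chi=\chi^{1/2}\cdot\chi^{1/2}$ and applying Cauchy--Schwarz in $x$ then yields
\[
\int\chi\bigl||\phi_k|^4-|\varphi|^4\bigr|\,dx\le \|\chi^{1/2}(\phi_k-\varphi)\|_{L^2_x}\,\bigl\|\chi^{1/2}(|\phi_k|+|\varphi|)^3\bigr\|_{L^2_x}.
\]
A further Cauchy--Schwarz in $\omega$ gives
\[
\E|f(\phi_k)-f(\varphi)|\le \tfrac{1}{2\pi}\,\|\chi^{1/2}(\phi_k-\varphi)\|_{L^2_\omega,L^2_x}\,\bigl\|\chi^{1/2}(|\phi_k|+|\varphi|)^3\bigr\|_{L^2_\omega,L^2_x}.
\]

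The first factor tends to $0$ directly from Lemma \ref{lem-cvm2}. The second factor must be shown to be uniformly bounded in $k$. By Fubini,
\[
\bigl\|\chi^{1/2}(|\phi_k|+|\varphi|)^3\bigr\|_{L^2_\omega,L^2_x}^2=\int_\R \chi(x)\,\E\bigl(|\phi_k(x)|+|\varphi(x)|\bigr)^6 dx,
\]
and since $\phi_k(x)$ and $\varphi(x)$ are centred complex Gaussian variables whose variances $\sum_l \tfrac{1}{N_k(1+l^2/N_k^2)}$ resp.\ $\int\tfrac{dn}{1+n^2}$ are bounded uniformly in $x$ and $k$, the sixth moment is bounded by a constant $C$ uniformly. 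Combined with $\int\chi\le\int\an{x}^{-3\alpha}dx<\infty$ (from the standing assumption $\chi^{1/3}\lesssim\an{x}^{-\alpha}$ with $\alpha>1$), the second factor is indeed uniformly bounded in $k$, and the product tends to zero.

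The only step that requires mild care is verifying the uniform-in-$k$ Gaussian moment bound, but this is immediate from the explicit expansion of $\phi_k$ in Remark \ref{rem-Glaw} together with the fact that the associated Riemann sums converge to the variance of $\varphi(x)$; no further analytic obstacle arises, so the argument is essentially a two-step Cauchy--Schwarz reduction to Lemma \ref{lem-cvm2}.
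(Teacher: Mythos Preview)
Your proof is correct and follows essentially the same approach as the paper: the paper also reduces to the inequality $|f(u)-f(v)|\le \|\chi^{1/2}(u-v)\|_{L^2_x}\bigl(\|\chi^{1/2}|u|^3\|_{L^2_x}+\|\chi^{1/2}|v|^3\|_{L^2_x}\bigr)$, then applies Cauchy--Schwarz in $\omega$, invokes Lemma~\ref{lem-cvm2} for the difference factor, and the uniform-in-$k$ bound on $\|\chi^{1/2}|\phi_k|^3\|_{L^2_\omega,L^2_x}+\|\chi^{1/2}|\varphi|^3\|_{L^2_\omega,L^2_x}$ for the cubic factor. Your write-up simply makes the derivation of that inequality and the Gaussian moment bound more explicit.
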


\begin{proof} This is due to the fact that 
$$
|f(u) - f(v)| \leq \|\chi^{1/2} (u-v)\|_{L^2_x} (\|\chi^{1/2}|u|^3\|_{L^2_x} + \|\chi^{1/2}|v|^3\|_{L^2_x})\; ,
$$
that $\|\chi^{1/2}|\varphi|^3\|_{L^2_\omega,L^2_x} + \|\chi^{1/2}|\phi_k|^3\|_{L^2_\omega,L^2_x}$ is uniformly bounded  in $k$ and Lemma \ref{lem-cvm2}.\end{proof}

\begin{proposition}\label{prop-cvm} The sequence $\rho_k$ converges weakly towards $\rho$. \end{proposition}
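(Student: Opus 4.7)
The plan is to verify weak convergence via the standard criterion on the metric space $(X^{-1/2-},d)$: for every bounded continuous $G:X^{-1/2-}\to\R$, one must show
$$
\frac{1}{J_k}\E[G(\phi_k) f_k(\phi_k)] \longrightarrow \frac{1}{J}\E[G(\varphi) f(\varphi)].
$$
I would begin by disposing of the normalising constants. Writing $J_k - J = \E[f_k(\phi_k) - f(\phi_k)] + \E[f(\phi_k) - f(\varphi)]$, the first piece is controlled by $\|f_k - f\|_{L^1_{\mu_k}}$ via Lemma \ref{lem-cvm3}, and the second is exactly the content of Lemma \ref{lem-cvm4}. Since $f$ is strictly positive on a set of full $\mu$-measure, $J > 0$, so $1/J_k \to 1/J$, and it suffices to establish the unnormalised convergence.

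Next I would introduce the three-term telescoping
$$
\E[G(\phi_k) f_k(\phi_k)] - \E[G(\varphi) f(\varphi)] = A_k + B_k + C_k
$$
with $A_k = \E[G(\phi_k)(f_k(\phi_k) - f(\phi_k))]$, $B_k = \E[G(\phi_k)(f(\phi_k) - f(\varphi))]$, and $C_k = \E[(G(\phi_k) - G(\varphi)) f(\varphi)]$. The first two are handled immediately: $|A_k| \leq \|G\|_\infty \|f_k - f\|_{L^1_{\mu_k}} \to 0$ by Lemma \ref{lem-cvm3}, and $|B_k| \leq \|G\|_\infty \|f(\phi_k) - f(\varphi)\|_{L^1_\omega} \to 0$ by Lemma \ref{lem-cvm4}.

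The real obstacle is $C_k$, which requires the continuity of $G$ to kick in despite having no strong control on $G(\phi_k) - G(\varphi)$. Here I would exploit the fact that the proof of Lemma \ref{applem} already establishes $\|p_s(\phi_k - \varphi)\|_{L^2_\omega} \to 0$ for every $s \in \mathcal{A}$; because $\mathcal{A}$ is countable, this upgrades to $d(\phi_k,\varphi) \to 0$ in probability on $X^{-1/2-}$. The standard subsequence-of-subsequences trick then closes the argument: from any subsequence, extract a further subsequence along which $d(\phi_{k_j},\varphi) \to 0$ almost surely, so by continuity of $G$ the convergence $G(\phi_{k_j}) \to G(\varphi)$ holds a.s.; since $|(G(\phi_{k_j}) - G(\varphi)) f(\varphi)| \leq 2\|G\|_\infty$ (using $f \leq 1$) is dominated by an integrable constant, dominated convergence yields $C_{k_j} \to 0$. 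As every subsequence admits such a further subsequence, the whole sequence $C_k$ vanishes, concluding the proof. The bridge between the $L^2_\omega$ convergence of $p_s(\phi_k - \varphi)$ and the merely pointwise evaluation of $G$ is provided precisely by this subsequence argument, which is where the weakness of the topology on $X^{-1/2-}$ is essential.
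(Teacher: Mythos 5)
Your proof is correct and follows essentially the same three-term telescoping as the paper: your $A_k$, $B_k$, $C_k$ correspond to the paper's $III$, $II$, $I$, and you invoke the same Lemmas \ref{lem-cvm3} and \ref{lem-cvm4} together with the same argument for $J_k\to J$. The one genuine point of divergence is the term $C_k$. The paper tests weak convergence only against bounded \emph{Lipschitz} functions $F$ (which suffices by the Portmanteau theorem on a metric space) and so disposes of the corresponding term in one line, $|I|\le \|f\|_{\infty}J^{-1}\,\mathrm{Lip}(F)\,\E\big(d(\varphi,\phi_k)\big)\to 0$; you instead work with general bounded continuous $G$, which forces the convergence-in-probability, subsequence-of-subsequences, dominated-convergence argument. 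Both routes are valid: the paper's buys brevity at the price of quoting the equivalence between weak convergence and convergence of integrals of bounded Lipschitz test functions, while yours proceeds directly from the definition. One step worth making explicit in your version is the passage from $\|p_s(\phi_k-\varphi)\|_{L^2_\omega}\to 0$ for each fixed $s\in\mathcal{A}$ to $d(\phi_k,\varphi)\to 0$ in probability: since $\tfrac{p}{1+p}\le\min(p,1)$, one truncates the sum over $l$ in the definition of $d$ uniformly in $k$ and then sends $k\to\infty$ in the finitely many remaining terms, giving in fact $\E\big(d(\phi_k,\varphi)\big)\to 0$.
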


\begin{proof} Let $F$ be a bounded Lipschitz continuous function on $X^{-1/2-}$. We have 
$$
\Big| \int F(u)  d \rho(u)  - \int F(u) d\rho_k(u) \Big| \leq I + II +III
$$
 with 
\begin{eqnarray*}
I &=& \Big| \int \frac{f(\varphi)}{J} F(\varphi) d\mathbb P -  \int \frac{f(\varphi)}{J} F(\phi_k) d\mathbb P\Big| \\
II &=& \Big| \int \frac{f(\varphi)}{J} F(\phi_k) d\mathbb P -  \int \frac{f(\phi_k)}{J} F(\phi_k) d\mathbb P\Big| \\
III &=& \Big| \int \frac{f(\phi_k)}{J} F(\phi_k) d\mathbb P -  \int \frac{f_k(\phi_k)}{J_k} F(\phi_k) d\mathbb P\Big|  \; .
\end{eqnarray*}

For $I$, we use that $f$ is bounded, $F$ is Lipschitz-continuous and that $\E_{\mathbb P}(d(\varphi, \phi_k))$ converges towards $0$ as was proved in Subsection 2.1 since $\|\cdot\|_{L^1_\omega} \leq \|\cdot\|_{L^2_\omega}$.

For $II$, we use that $F$ is bounded and Lemma \ref{lem-cvm4}.

 For $III$, we use Lemmas \ref{lem-cvm1} \ref{lem-cvm2} to prove that $J_k \rightarrow J$, that $F$ is bounded and that $\|f-f_k\|_{L^1_{\mu_k}} \rightarrow 0$.
\end{proof}

\subsection{Invariance of \texorpdfstring{$\rho_k$}{rhok} under the finite dimensional non linear flow}

In this subsection, we prove that the flow $\psi_k(t)$ is Hamiltonian on $E_k$ and that $\rho_k$ is invariant under $\psi_k(t)$.

\begin{lemma}\label{lem-hamil} The flow $\psi_k(t)$ is a a flow of a Hamiltonian equation on $E_k$ with Hamiltonian
$$
H_k = \frac1{2\pi} \Big( -\int_{-\pi N_k}^{\pi N_k} \overline u \lap u  + \frac12 \int_{-\pi N_k}^{\pi N_k} \chi |\Pi_k u|^4 \Big) \; .
$$
\end{lemma}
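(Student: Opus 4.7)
The plan is to verify that \eqref{approach}, namely
$$
i\partial_t u + \lap u - \Pi_k P_k(\chi |\Pi_k u|^2 \Pi_k u) = 0,
$$
coincides with the Hamilton equation on $E_k$ for the stated functional $H_k$, once $E_k$ is endowed with a natural symplectic form. I would first identify $E_k$ explicitly as the finite-dimensional space of $2\pi N_k$-periodic functions whose Fourier series (on the torus of period $2\pi N_k$) is supported where $\eta_k \neq 0$, i.e. in frequencies $|m|\leq N_k M_k$. On such functions $\Pi_k$ acts as the Fourier multiplier by the real even symbol $\eta_k(m/N_k)$ and is therefore self-adjoint on $L^2([-\pi N_k,\pi N_k])$. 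The operator $P_k$ I read as the periodization operator, which to a function on $\R$ associates the unique $2\pi N_k$-periodic function agreeing with it on $[-\pi N_k,\pi N_k]$; it is adjoint to inclusion of periodic functions into $L^2(\R)$. Stability of $E_k$ under $\psi_k(t)$ is immediate from the outer $\Pi_k$ in the nonlinearity, and I equip $E_k$ with the symplectic form $\omega(u,v) = \frac{1}{\pi}\textrm{Im}\int_{-\pi N_k}^{\pi N_k}\overline u\, v\,dx$, normalized to absorb the $\frac{1}{2\pi}$ prefactor of $H_k$.

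Next, I compute the Fréchet differential $DH_k(u)\cdot h$ for $u,h \in E_k$. For the kinetic part, two integrations by parts over one period (no boundary terms by periodicity) give $-\frac{1}{\pi}\textrm{Re}\int_{-\pi N_k}^{\pi N_k} \overline{\lap u}\cdot h\,dx$. For the potential part, the chain rule yields
$$
\frac{d}{d\varepsilon}\Big|_{\varepsilon=0}\int_{-\pi N_k}^{\pi N_k}\chi |\Pi_k(u+\varepsilon h)|^4\,dx = 4\,\textrm{Re}\int_{-\pi N_k}^{\pi N_k} \chi |\Pi_k u|^2 \Pi_k u \cdot \Pi_k\overline h\,dx.
$$
Since $\Pi_k\overline h$ is $2\pi N_k$-periodic, I can replace $\chi |\Pi_k u|^2\Pi_k u$ inside this integral by its periodization $P_k(\chi |\Pi_k u|^2\Pi_k u)$ without changing the value, and then use self-adjointness of $\Pi_k$ on the torus to transfer $\Pi_k$ onto $\overline h$. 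Combining both pieces,
$$
DH_k(u)\cdot h = \frac{1}{\pi}\textrm{Re}\int_{-\pi N_k}^{\pi N_k} \overline{\big(-\lap u + \Pi_k P_k(\chi |\Pi_k u|^2 \Pi_k u)\big)}\cdot h\,dx.
$$

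Finally, Hamilton's equation is $\omega(\partial_t u,h) = DH_k(u)\cdot h$ for every $h\in E_k$. Using $\textrm{Im}\int\overline{\partial_t u}\,h = \textrm{Re}\int\overline{i\partial_t u}\,h$, this rewrites as the real pairing identity
$$
\textrm{Re}\int_{-\pi N_k}^{\pi N_k}\overline{\big(i\partial_t u + \lap u - \Pi_k P_k(\chi |\Pi_k u|^2\Pi_k u)\big)}\cdot h\,dx = 0
$$
for all $h\in E_k$. Since the expression in the parentheses itself lies in $E_k$ (the outer $\Pi_k$ and $P_k$ keep it there), testing against $h$ ranging over $E_k$ forces this expression to vanish, which is precisely \eqref{approach}. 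The one genuinely subtle point of the argument is the handling of $P_k$ in the potential term: one must recognize that $\chi |\Pi_k u|^2\Pi_k u$, though not periodic, is pairing against a periodic function over a single period and is therefore indistinguishable from its periodization, and that this is exactly what conjures up the composite $\Pi_k P_k$ operator appearing in the approaching equation. Everything else reduces to integration by parts and finite-dimensional linear algebra.
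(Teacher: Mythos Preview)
Your argument is correct and is, at bottom, the same computation the paper carries out: differentiate $H_k$ and match the result against the equation \eqref{approach}. The paper does this in Fourier coordinates on $E_k$, writing $u=\sum_j u_j e^{ijx/N_k}$, computing the Wirtinger derivatives $\partial H_c/\partial u_j$ and $\partial H_p/\partial u_j$ explicitly, and identifying the complex structure as $J=i/N_k$; you do it coordinate-free, fixing the symplectic form $\omega(u,v)=\frac{1}{\pi}\,\textrm{Im}\int \overline u\,v$ and computing the Fr\'echet differential directly. The two are equivalent (your $\omega$ is exactly the symplectic form whose matrix in the $u_j$-coordinates is $J^{-1}$), and your handling of the $P_k$ is the same observation the paper uses when it rewrites $H_p$ with $P_k\chi$ in place of $\chi$: on the single period $[-\pi N_k,\pi N_k]$ the two agree. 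Your presentation is perhaps cleaner in that it avoids the index bookkeeping, while the paper's coordinate computation makes the link to the finite-dimensional Lebesgue measure in Proposition~\ref{prop-fininvar} more transparent.
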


\begin{proof} Let 
$$
H_c(u) = -\frac1{2\pi} \int_{-\pi N_k}^{\pi N_k} \overline u \lap u = N_k\sum_j |u_j|^2 \frac{j^2}{N_k^2} \; .
$$
We have 
$$
\frac{dH_c(u)}{du_j}  = N_k \frac{j^2}{N_k^2} \overline{u_j} \; .
$$

Let 
$$
H_p (u) = \frac1{4\pi } \int (P_k \chi) |\Pi_k u|^4 = \frac12 \sum_{j_1 + \hdots + j_5 = 0} (P_k \chi)_{j_1} (\overline{\Pi_k u})_{j_2} (\overline{\Pi_k u})_{j_3} (\Pi_k u)_{j_4} (\Pi_k u)_{j_5} \; .
$$
We have 
\begin{eqnarray*}
\frac{dH_p(u)}{du_j} &=& N_k \sum_{j_1 + \hdots + j_4 = -j}  (P_k \chi)_{j_1} (\overline{\Pi_k u})_{j_2} (\overline{\Pi_k u})_{j_3} (\Pi_k u)_{j_4}\\
 &=& N_k \Big( P_k\chi |\Pi_k u|^2 \overline{\Pi_k u}\Big)_{-j} \\
 &= & N_k \overline{(P_k \chi |\Pi_k u|^2 \Pi_k u)_j} \; .
\end{eqnarray*}
By summing these two identities, we get
$$
\frac{dH_k(u)}{du_j} = N_k \overline{(i\partial_t u)_j} = -i N_k \dot{\overline{u_j}}
$$
which gives 
$$
\partial_t u = J \bigtriangledown_u H_k(u) 
$$
with $J = \frac{i}{N_k}$.
\end{proof}

\begin{lemma}The $L^2$ norm is invariant under $\psi_k(t)$. \end{lemma}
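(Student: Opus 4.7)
My plan is to prove mass conservation by the classical argument: multiply the equation by $\overline{u}$, integrate over the period $[-\pi N_k,\pi N_k]$, and take imaginary parts. The flow is defined on $E_k$, which is the finite-dimensional subspace used to parametrize $\phi_k$; on $E_k$ both $\Pi_k$ and $P_k$ act as the identity, and both are self-adjoint (as Fourier multipliers with real even symbols, resp.\ as an orthogonal projection onto periodic functions). These are the only facts I will use about them.

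First, I would rewrite the equation \eqref{approach} in the form
$$
\partial_t u = i\lap u - i\,\Pi_k P_k\bigl(\chi|\Pi_k u|^2 \Pi_k u\bigr),
$$
and compute
$$
\tfrac{1}{2}\tfrac{d}{dt}\int_{-\pi N_k}^{\pi N_k}|u|^2\,dx = \re\!\int \overline{u}\,\partial_t u\,dx = -\im\!\int \overline{u}\,\lap u\,dx + \im\!\int \overline{u}\,\Pi_k P_k\bigl(\chi|\Pi_k u|^2\Pi_k u\bigr) dx.
$$
The first term vanishes because integration by parts together with periodicity gives $\int \overline{u}\,\lap u\,dx = -\int |\partial_x u|^2\,dx \in \R$, whose imaginary part is zero.

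For the nonlinear term I would use self-adjointness of $\Pi_k$ and $P_k$ to move them onto $\overline{u}$, and then the fact that $u \in E_k$ implies $P_k \Pi_k u = u$, yielding
$$
\int \overline{u}\,\Pi_k P_k\bigl(\chi|\Pi_k u|^2\Pi_k u\bigr) dx = \int \overline{P_k\Pi_k u}\,\chi|\Pi_k u|^2\Pi_k u\,dx = \int \chi|\Pi_k u|^4\,dx \;\in\; \R,
$$
since on $E_k$ we also have $\overline{u}\,\Pi_k u = |u|^2 = |\Pi_k u|^2$. Its imaginary part is therefore zero, so $\tfrac{d}{dt}\|u(t)\|_{L^2(-\pi N_k,\pi N_k)}^2 = 0$, which is exactly the claimed invariance.

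There is no real obstacle here beyond bookkeeping: the only point one has to keep straight is that $E_k$ is contained in the range of $P_k\Pi_k$ (so that both projectors act as the identity on solutions of \eqref{approach}), which is built into the very definition of $E_k$ and the choice of $\eta_k$. Alternatively, the same conclusion follows from the Hamiltonian structure established in Lemma \ref{lem-hamil}: the Hamiltonian $H_k$ is invariant under the phase rotations $u \mapsto e^{i\theta} u$ (since $|u|^2$, $\overline{u}\lap u$ and $|\Pi_k u|^4$ are), so Noether's theorem for the symplectic form $\omega = \tfrac{1}{N_k}\im \sum_j du_j\wedge d\overline{u_j}$ yields conservation of the generator $\tfrac{1}{2\pi}\int |u|^2$; but I prefer the direct computation as it is self-contained and does not require unpacking the symplectic structure.
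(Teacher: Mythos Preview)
Your approach is exactly the one the paper takes---differentiate $\tfrac12\int|u|^2$, substitute the equation, and observe that both contributions are real---and the conclusion is correct. There is, however, a factual slip in your justification: $\Pi_k$ does \emph{not} act as the identity on $E_k$. The space $E_k$ carries frequencies $l/N_k$ with $|l|\leq N_kM_k$, so $|l/N_k|$ ranges up to $M_k$, while $\eta(\cdot/M_k)$ equals $1$ only on $[-M_k/2,M_k/2]$; hence $\Pi_k u\neq u$ in general, and neither $P_k\Pi_k u=u$ nor $\overline u\,\Pi_k u=|\Pi_k u|^2$ holds.

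The fix is immediate and you essentially already have it on the page: since $u\in E_k$ is $2\pi N_k$-periodic, so is $\Pi_k u$ (Fourier multipliers preserve periodicity), hence $P_k\Pi_k u=\Pi_k u$. Self-adjointness then gives
\[
\int \overline{u}\,\Pi_k P_k\bigl(\chi|\Pi_k u|^2\Pi_k u\bigr)\,dx
=\int \overline{\Pi_k u}\,\chi|\Pi_k u|^2\Pi_k u\,dx
=\int \chi|\Pi_k u|^4\,dx\in\R,
\]
which is what you want. So the argument is sound once you replace ``$P_k\Pi_k u=u$'' by ``$P_k\Pi_k u=\Pi_k u$'' and drop the claim about $\Pi_k$ being the identity on $E_k$.
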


\begin{proof}The proof follows from the differentiation of 
$$
\frac12 \int |u|^2
$$
which gives 
$$
\partial_t \frac12 \int |u|^2 = \re\Big( -i \int \overline u (\lap u - (P_k \chi) |\Pi_k u|^2 \Pi_k u) \Big) = 0\; .
$$
\end{proof}

\begin{proposition}\label{prop-fininvar} The measure $\rho_k$ is invariant under the flow $\psi_k(t)$. \end{proposition}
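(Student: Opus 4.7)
The plan is to exploit the finite-dimensional Hamiltonian structure of $\psi_k(t)$ on $E_k$ together with Liouville's theorem, in the classical Gibbs-measure style. By construction $\mu_k$ is the law of $\phi_k$, hence supported on the finite-dimensional complex space $E_k := \mathrm{span}\{e^{ijx/N_k} : -N_k M_k \leq j < N_k M_k\}$; consequently $\rho_k$ is supported on $E_k$, and it suffices to prove invariance there. Using the coordinates $(u_j,\overline{u_j})$, Lemma \ref{lem-hamil} realises $\psi_k(t)$ as the Hamiltonian flow of $H_k$ for the symplectic form $\omega_k \propto i\sum_j du_j \wedge d\overline{u_j}$, and Liouville's theorem tells us that the associated Liouville volume --- a constant multiple of Lebesgue measure $dL$ on $E_k$ --- is preserved by $\psi_k(t)$.

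From Remark \ref{rem-Glaw}, the density of $\mu_k$ with respect to $dL$ is $C_k\exp\bigl(-\tfrac{1}{2\pi}\int\overline u(1-\Delta)u\bigr)$, so
\begin{equation*}
\frac{d\rho_k}{dL}(u) \;=\; \frac{C_k}{J_k}\,\exp\bigl(-G_k(u)\bigr), \qquad G_k(u) = \frac{1}{2\pi}\int \overline u(1-\Delta)u\,dx + \frac{1}{2\pi}\int\chi|\Pi_k u|^4\,dx.
\end{equation*}
The crucial step is then to show that $G_k$ is conserved along $\psi_k(t)$. Morally this should follow by writing $G_k$ as an affine combination of the two functionals $\|u\|_{L^2}^2$ (conserved by the preceding lemma) and $H_k$ (the Hamiltonian of Lemma \ref{lem-hamil}), both of which are constants of motion. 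Alternatively, one can differentiate $G_k$ directly along the flow, using self-adjointness of $\Pi_k$ and $P_k$ together with the fact that $P_k u = u$ for $u \in E_k$, to see that the nontrivial cross terms cancel after integration by parts.

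Granted that $G_k$ is a constant of motion, invariance follows by the standard change-of-variable argument: for any Borel $A \subset E_k$, the Liouville property (unit Jacobian of $\psi_k(t)$ with respect to $dL$) yields
\begin{equation*}
\rho_k(\psi_k(t)^{-1}A) = \frac{C_k}{J_k}\int_{\psi_k(t)^{-1}A} e^{-G_k(u)}\,dL(u) = \frac{C_k}{J_k}\int_A e^{-G_k(\psi_k(-t)v)}\,dL(v) = \rho_k(A),
\end{equation*}
and this extends to arbitrary measurable subsets of $X^{-1/2-}$ since $\rho_k$ is concentrated on $E_k$. I expect the main obstacle to lie in the algebraic identification of $G_k$ with a combination of conserved quantities: one must keep careful track of the various $\tfrac{1}{2\pi}$ and $\tfrac{1}{4\pi}$ factors distinguishing the quadratic form of $\mu_k$, the exponent of $f_k$ and the normalisation chosen in $H_k$. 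Everything else --- Liouville, $L^2$ conservation, the change of variables, and the extension off $E_k$ --- is routine.
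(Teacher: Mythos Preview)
Your proposal is correct and follows essentially the same approach as the paper: write $d\rho_k = c_k\,e^{-G_k}\,dL$ on the finite-dimensional space $E_k$, invoke Liouville's theorem for the invariance of $dL$ under the Hamiltonian flow, and recognise $G_k$ as a linear combination of the conserved quantities $\tfrac{1}{2\pi}\|u\|_{L^2}^2$ and $H_k$. Your caution about the $\tfrac{1}{2\pi}$ versus $\tfrac{1}{4\pi}$ factors is in fact warranted --- the constant in the paper's definition of $f_k$ does not quite match the potential part of $H_k$ in Lemma~\ref{lem-hamil}, so the identification $G_k = \tfrac{1}{2\pi}\|u\|_{L^2}^2 + H_k$ used in the paper's one-line proof is off by a factor of two in the quartic term; this is a harmless normalisation slip (fixing either constant makes everything consistent), and your explicit flagging of it is the only respect in which your write-up goes beyond the paper's.
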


\begin{proof} The measure $\rho_k$ is such that
$$
d\rho_k(u) =d_k e^{-\frac1{2\pi}\int |u|^2 - H_k(u)} dL_k(u)\; .
$$
The Lebesgue measure $L_k$ is invariant under the flows of Hamiltonian equations, and the $L^2$ norm and $H_k$ are invariant under $\psi_k(t)$. Hence $\rho_k$ is invariant under $\psi_k(t)$.\end{proof}

\section{Local analysis}

In this section, we prove all the local in time properties of the flows $\psi(t)$ and $\psi_k(t)$ that we extend in the next section to prove the invariance of the measure. We prove local well-posedness, local uniform convergence of $\psi_k(t)$ towards $\psi(t)$, and local continuity.

\subsection{Local well-posedness}

In this subsection, we prove the local well-posedness of \eqref{NLS} and \eqref{approach} with a time of existence independent from the regularity $s \in [s_\infty,s_0]$ in which we solve the Cauchy problem and independent from $k$, the index of the approaching sequence.

\begin{proposition}\label{prop-lwp} Let $t_0 \in \R$. There exists $C$ such that for all $s \in [s_\infty, s_0]$, all $\Lambda \geq 1$, all $k \in \N$, all $u_0$ such that
$$
\|L(t) u_0\|_{Z_{t_0}(s)} \leq \Lambda
$$
and all $v_0\in E_k $ such that $\|v_0\|_{H^s(\pi N_k)}\leq \Lambda$, the Cauchy problem 
$$
\left \lbrace{ \begin{tabular}{ll}
$i \partial_t v + \lap v - \Pi_k P_k \Big( \chi |\Pi_k (L(t) u_0 + v)|^2(\Pi_k(L(t) u_0 + v)) \Big)= 0 $\\
$v_{|t=t_0} = v_0$ \end{tabular}} \right.
$$
has a unique solution in $Y_{t_0,T}(s, \pi N_k)\cap E_k$ with $T =  \frac1{C \Lambda^{2\gamma}} $ and 
$$
\|v\|_{Y_{t_0,T}(s,\pi N_k)} \leq C \Lambda \; .
$$
\end{proposition}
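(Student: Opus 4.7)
The plan is to solve the Duhamel formulation
$$
\Phi(v)(t) = L(t-t_0)v_0 - i\int_{t_0}^t L(t-\tau)\,\Pi_k P_k\bigl(\chi|w(\tau)|^2 w(\tau)\bigr)\,d\tau,\qquad w:=\Pi_k(L(\tau)u_0+v),
$$
by a Banach fixed point on the ball $B:=\{v\in Y_{t_0,T}(s,\pi N_k)\cap E_k:\|v\|_{Y_{t_0,T}(s,\pi N_k)}\leq 2C\Lambda\}$. The homogeneous and inhomogeneous (via $TT^\ast$) torus Strichartz estimates stated in the preliminaries, together with the uniform $L^p$- and $H^s(\pi N_k)$-boundedness of $\Pi_k P_k$, give
$$
\|\Phi(v)\|_{Y_{t_0,T}(s,\pi N_k)}\leq C\|v_0\|_{H^s(\pi N_k)}+C\int_{t_0-T}^{t_0+T}\|\chi|w|^2w\|_{H^s(\pi N_k)}\,d\tau,
$$
with the constant $C$ uniform in $k$, $T\leq 1$ and $s\in[s_\infty,s_0]$.

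The core of the proof is the trilinear estimate
$$
\int_{t_0-T}^{t_0+T}\|\chi|w|^2w\|_{H^s(\pi N_k)}\,d\tau\leq CT^{\gamma'}\bigl(\|\Pi_k L(\cdot)u_0\|_{Z_{t_0}(s)}+\|v\|_{Y_{t_0,T}(s,\pi N_k)}\bigr)^3.
$$
To obtain the pointwise bound I would split $\chi=\chi^{1/3}\cdot\chi^{1/3}\cdot\chi^{1/3}$ and apply the fractional Leibniz rule (valid since $s\leq s_0<1/2$). The hypotheses $|\chi^{1/3}|,|D^{s_0}\chi^{1/3}|\lesssim\langle x\rangle^{-\alpha}$ allow every fractional derivative falling on a $\chi^{1/3}$-factor to be absorbed into the weight $\langle x\rangle^{-\alpha}$; every derivative falling on $w$ is controlled by $\|\langle x\rangle^{-\alpha}D^s w\|_{L^2}$, and the remaining factors by $\|\langle x\rangle^{-\alpha}w\|_{L^\infty}$. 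Integration in time is by Hölder with three factors each in $L^p_t$ and the constant $1$ in $L^{p/(p-3)}_t$, producing the prefactor $T^{1-3/p}=T^{\gamma'}$. Finally the weighted norms of $w$ on the time slab $[t_0-T,t_0+T]$ are bounded by $\|\Pi_k L(\cdot)u_0\|_{Z_{t_0}(s)}+\|v\|_{Y_{t_0,T}(s,\pi N_k)}$, using the uniform $L^p$-boundedness of $\Pi_k$ on weighted spaces and the periodicity of $v$ (which makes $\|\langle x\rangle^{-\alpha}v\|_{L^\infty}$ on $[-\pi N_k,\pi N_k]$ dominated by its unweighted $L^\infty$-norm on the fundamental domain).

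Combining the two displays with $\|v_0\|_{H^s(\pi N_k)}\leq\Lambda$ and $\|L(\cdot)u_0\|_{Z_{t_0}(s)}\leq\Lambda$ gives
$$
\|\Phi(v)\|_{Y_{t_0,T}(s,\pi N_k)}\leq C\Lambda+CT^{\gamma'}(\Lambda+\|v\|_{Y_{t_0,T}(s,\pi N_k)})^3\leq 2C\Lambda
$$
on $B$, as soon as $CT^{\gamma'}\Lambda^2\leq c$ for a small absolute $c$, which is precisely the choice $T=1/(C\Lambda^{2\gamma})$ with $\gamma=1/\gamma'$. An entirely analogous trilinear difference estimate, using the identity $|a|^2a-|b|^2b=(a-b)Q(a,b)$ with $Q$ homogeneous of degree two, gives contractivity of $\Phi$ on $B$; the Banach fixed point theorem produces the unique solution $v$ with the announced bound. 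Stability of $E_k$ under $\Phi$ follows from $v_0\in E_k$, the invariance of $E_k$ under $L(t)$, and the fact that $\Pi_k P_k$ projects onto $E_k$, so the fixed point lies in $E_k$.

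The main obstacle I expect is the trilinear estimate, specifically the fractional Leibniz rule on the torus with the rough weight $\chi$ of only $H^{s_0}$ regularity, together with bookkeeping to ensure every constant is uniform simultaneously in $k$ and in $s\in[s_\infty,s_0]$; the $k$-uniformity relies on the scale invariance of the torus Strichartz estimate and the $k$-uniform $L^p$-boundedness of $\Pi_k$, both recorded in the preliminaries.
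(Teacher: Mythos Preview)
Your proposal is correct and follows the same contraction-mapping strategy as the paper: torus Strichartz estimates from \cite{BGTstr} combined with a trilinear bound exploiting the decay and $H^{s_0}$-regularity of $\chi^{1/3}$, then Banach fixed point. The only cosmetic difference is that the paper separates the nonlinearity into the pure-$u_0$ contribution $\chi|\Pi_k L(\tau)u_0|^2\Pi_k L(\tau)u_0$ (estimated in $H^s(\R)$ via the $Z_{t_0}(s)$ norm) and the pure-$v$ contribution $(P_k\chi)|\Pi_k v|^2\Pi_k v$ (estimated directly on the torus), whereas you distribute one factor $\chi^{1/3}$ per copy of $w$ and treat all terms uniformly in weighted norms; your formulation makes the cross terms more transparent. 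One small point to clean up: since $w$ is not periodic, the trilinear estimate should be stated for $\|\chi|w|^2w\|_{H^s(\R)}$ rather than $H^s(\pi N_k)$, passing through $\|\Pi_k P_k f\|_{H^s(\pi N_k)}\lesssim\|f\|_{H^s(\R)}$ (the decay of $\chi$ makes the periodization bounded).
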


\begin{proof}Following the proof of \cite{BGTstr}, we rely on Strichartz estimates to prove that the map 
$$
A(v)(t) = L(t-t_0) v_0 +i  \int_{t_0}^t L(t-\tau) \Pi_k P_k \Big( \chi |\Pi_k(L(\tau) u_0 + v(\tau))|^2(\Pi_k(L(\tau) u_0 + v(\tau))) \Big)
$$
is contracting over some closed set. The solution $v$ is the fixed point of $A$. 

Thanks to Strichartz estimates, we have 
$$
\|A(v)\|_{Y_{t_0,T}(s,\pi N_k)} \leq C\|v_0\|_{H^s(\pi N_k)} + \int_{t_0}^t \big\| \Pi_k P_k \Big( \chi |\Pi_k(L(\tau) u_0 + v(\tau))|^2(\Pi_k(L(\tau) u_0 + v(\tau))) \Big) \big\|_{H^s(\pi N_k)}\; .
$$
We have that
\begin{multline*}
\big\| \Pi_k P_k \Big( \chi |L(\tau) u_0 + v(\tau)|^2(L(\tau) u_0 + v(\tau)) \Big) \big\|_{H^s(\pi N_k)} \leq \\
\|\chi |\Pi_k L(\tau) u_0|^2\Pi_k L(t) u_0\|_{H^s(\R)} + \|(P_k \chi) |\Pi_k v|^2 \Pi_k v\|_{H^s(\pi N_k)}.
\end{multline*}
We have by definition $\chi \in L^\infty \cap H^s$ and 
$$
\||\Pi_k f|^2 \Pi_k f\|_{H^s} \leq C (\|\Pi_k f\|_{H^s}+\|\Pi_k f\|_{L^\infty})^3\; .
$$
Besides, $\Pi_k$ is a smooth cutoff, hence $\|\Pi_k f\|_{L^\infty} \leq C \|f\|_{L^\infty}$. What is more, by definition of $\chi$, $\chi^{1/3} \lesssim \an{x}^{-\alpha}$ and $|D^{s_0}\chi^{1/3}| \lesssim \an{x}^{-\alpha}$, hence 
$$
\int_{t_0}^t \|\chi |\Pi_k L(\tau) u_0|^2\Pi_k L(t) u_0\|_{H^s(\R)}d\tau \leq T^{\gamma'} \|L(t) u_0\|_{Z_{t_0}(s)}\; .
$$
Finally, we get
$$
\|A(v)\|_{Y_{t_0,T}(s,\pi N_k)} \leq C\|v_0\|_{H^s(\pi N_k)} + C T^{\gamma'} \Big( \|L(t) u_0\|_{Z_{t_0}(s)}^3 + \|\chi\|_{L^\infty \cap H^s} \|v\|_{Y_{t_0,T}(s, \pi N_k)}^3\big)
$$
and with the hypothesis on $\chi$, $u_0$ and $v_0$
$$
\|A(v)\|_{Y_{t_0,T}(s,\pi N_k)} \leq C\Lambda + C T^{\gamma'} \Big(\Lambda^3 +  \|v\|_{Y_{t_0,T}(s, \pi N_k)}^3\Big)\; .
$$
Therefore, for $T = \frac1{C \Lambda^{2\gamma}}$, the ball of radius $C\Lambda$ is stable under $A$. 

For the same reasons,
$$
\|A(v)- A(w)\|_{Y_{t_0,T}(s,\pi N_k)} \leq CT^{\gamma'} \Lambda^2 \|v-w\|_{T_{t_0,T}(s,\pi N_k)}
$$
which makes it contracting for $T = \frac1{C\Lambda^{2\gamma}}$ with $C$ big enough. Therefore, we have the existence, uniqueness of the solution as well as the bound on it.\end{proof}

\begin{remark} This involves in particular that the solutions $\psi_k(t) u_0$ exist and are unique. Indeed, $\psi'_k(t) u_0 = \psi(t) u_0 - L(t) u_0$ is the solution of the equation of \ref{prop-lwp} when $t_0 = 0$ and $v_0 = 0$. Besides, the proposition is still true if we solve 
$$
\left \lbrace{ \begin{tabular}{ll}
$i \partial_t v + \lap v -  \Big( \chi |L(t) u_0 + v|^2(L(t) u_0 + v) \Big)= 0 $\\
$v_{|t=t_0} = v_0$ \end{tabular}} \right.
$$
in $Y_{t_0, T}(s, + \infty)$ as the Strichartz are even more allowing in terms of regularity in non-compact manifolds. Hence, the solution $\psi(t)u_0$ is locally well-defined.
\end{remark}

\subsection{Local uniform convergence}

In this subsection, we prove that $\psi_k(t)$ converges uniformly in the initial datum, locally in time for several metrics.

\begin{definition}Let $Y'_T(s)$ be the space associated to the norm
$$
\|\cdot\|_{Y'_T(s)} = \|\an{x}^{-\alpha}D^{s} \cdot\|_{L^\infty_t([-T,T],L^2_x)}+\|\an{x}^{-\alpha} \cdot\|_{L^p_t([-T,T],L^\infty_x)}\; .
$$
It is a weighted version of $Y_T(s)$.
\end{definition}

\begin{lemma}\label{lem-jenaigravemarre}Let $u_0$ such that
$$
\| L(\tau) u_0\|_{Z(s)} \leq \Lambda \; .
$$

We have for all $R$, and all $s'< s$,
$$
\|\psi(t)u_0 - \psi_k(t) u_0\|_{Y'_T(s')} \leq C_\alpha(R) + C \|\psi(t)u_0 - \psi_k(t) u_0\|_{Y_T(s' , R)}
$$
with $C_\alpha(R)  = C ( \langle R \rangle^{-\alpha} + \langle R \rangle^{1-\alpha} (\pi N_k)^{-1})\rightarrow 0$ when $R$ goes to $\infty$.
\end{lemma}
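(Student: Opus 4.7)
The plan is to exploit the trivial but crucial cancellation $\psi(t)u_0 - \psi_k(t)u_0 = \psi'(t)u_0 - \psi'_k(t)u_0$, so that the slowly decaying linear flow $L(t)u_0$ drops out and the difference is split into two well-controlled pieces: $\psi'(t)u_0$, which by local well-posedness of \eqref{NLS} lies in the unweighted space $Y_T(s,+\infty)$ with $\|\psi'(t)u_0\|_{Y_T(s,+\infty)}\leq C\Lambda$, and $\psi'_k(t)u_0$, which by Proposition \ref{prop-lwp} lies in $Y_T(s,\pi N_k)\cap E_k$ and, as an element of $E_k$, extends to a $2\pi N_k$-periodic function on $\R$.

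With this decomposition, the two ingredients of $\|w\|_{Y'_T(s')}$ (with $w=\psi'(t)u_0-\psi'_k(t)u_0$) are handled by splitting the spatial integration at $|x|=R$. On $[-R,R]$ the weight satisfies $\langle x\rangle^{-\alpha}\leq 1$, so the contribution is controlled by the corresponding unweighted norm on $[-R,R]$, i.e.\ by $\|w\|_{Y_T(s',R)}$, producing the second term on the right-hand side. All the rest of the work concerns the outer region $|x|>R$, where the weight decay must do the job and no hypothesis on $w$ is available beyond the a priori bounds above; this region is exactly what should produce $C_\alpha(R)$.

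For the $L^p_t L^\infty_x$ part, one just writes $\langle x\rangle^{-\alpha}\leq \langle R\rangle^{-\alpha}$ on $|x|>R$ and uses that $\|w\|_{L^p_t L^\infty_x(\R)}\leq C\Lambda$ (the bound on $\psi'(t)u_0$ comes from $Y_T(s,+\infty)$, the bound on the periodic $\psi'_k(t)u_0$ from the equality of its $L^\infty_x(\R)$ and $L^\infty(\pi N_k)$ norms). This gives a contribution of size $C\langle R\rangle^{-\alpha}$, accounting for the first term of $C_\alpha(R)$. For the $L^\infty_tL^2_x$ weighted part on $|x|>R$, treat $\psi'(t)u_0$ and $\psi'_k(t)u_0$ separately: the first is in $L^\infty_t L^2_x$ globally, so the weight gives $\langle R\rangle^{-\alpha}\|D^{s'}\psi'(t)u_0\|_{L^\infty_tL^2_x}\leq C\Lambda\langle R\rangle^{-\alpha}$. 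The second requires decomposing $\{|x|>R\}$ into periods $[(2n-1)\pi N_k,(2n+1)\pi N_k]$ with $|n|\gtrsim R/(\pi N_k)$, bounding the weight on each period by $\langle 2\pi N_k n\rangle^{-2\alpha}$, using $\|D^{s'}\psi'_k\|_{L^2(\pi N_k)}\leq C\Lambda$, and summing: the resulting tail $\sum_{|n|\gtrsim R/(\pi N_k)}\langle 2\pi N_kn\rangle^{-2\alpha}$ is precisely what generates the $(\pi N_k)^{-1}$ gain predicted by the statement.

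The main obstacle is controlling the periodic part $\psi'_k(t)u_0$ in the weighted $L^2_x$ norm on $|x|>R$: since this function is not square integrable on $\R$ (it is merely periodic), the weight is indispensable, and one must take some care in converting the per-period $H^{s'}(\pi N_k)$ bound into a weighted global bound. Everything else — Minkowski inequality to commute norms in $t$ and $x$, the triangle inequality in $w=\psi'(t)u_0-\psi'_k(t)u_0$, and the trivial estimate $\langle x\rangle^{-\alpha}\leq \langle R\rangle^{-\alpha}$ for $|x|>R$ — is routine once the decomposition is in place.
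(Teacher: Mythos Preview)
Your proposal is correct and follows essentially the same route as the paper: split at $|x|=R$, bound the inner part trivially by $\|w\|_{Y_T(s',R)}$, and on the outer region handle $\psi'(t)u_0$ by weight decay and $\psi'_k(t)u_0$ by tiling $\{|x|>R\}$ into translates of the fundamental period and summing $\langle R+2\pi m N_k\rangle^{-\alpha}$. Your explicit passage to the primed quantities via $\psi(t)u_0-\psi_k(t)u_0=\psi'(t)u_0-\psi'_k(t)u_0$ is in fact cleaner than the paper's write-up, which applies the triangle inequality directly to $\psi(t)u_0$ and $\psi_k(t)u_0$ before invoking bounds that are really only available for the primed parts.
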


\begin{proof} We focus on 
$$
\|1_{|x|\geq R} (\psi_k(t) u_0 - \psi(t) u_0)\|_{Y'_T(s')}\; .
$$
We bound it by
$$
\|1_{|x|\geq R} \psi_k(t) u_0 \|_{Y'_T(s')} + \|1_{|x|\geq R}  \psi(t) u_0\|_{Y'_T(s')} \; .
$$
We divide $\{|x|\geq R\}$ in a union of intervals $\cup J_m (R) \cup J'_m(R)$ with $J_m(R) = [R+2\pi m N_k, R +2\pi (m+1) N_k]$ and $J'_m(R) = [-R - 2\pi (m+1) N_k, R +2\pi m N_k]$. We get
$$
\|1_{|x|\geq R} \psi_k(t) u_0 \|_{Y'_T(s')} \leq C \sum_m \langle R + 2 m \pi N_l\rangle^{-\alpha} 2\pi N_k \|\psi_k(t) u_0 \|_{Y_T(s', \pi N_k)} \leq C(R) \Lambda 
$$
with $C(R) = C  ( \langle R \rangle^{-\alpha} + \langle R \rangle^{1-\alpha} (\pi N_k)^{-1})$.

For the second one, we have 
$$
\|1_{|x|\geq R} \psi(t) u_0\|_{Y'_T(s')}\leq C\langle R \rangle^{-\alpha} \Lambda \; .
$$
\end{proof}

\begin{proposition}\label{prop-luc}There exists $C$ such that for all $s\in [s_\infty, s_0]$, all $s' < s$, all $\Lambda \geq 1$, all $\varepsilon > 0$,  all $u_0$ such that
$$
\|L(t)u_0\|_{Z(s)} \leq \Lambda
$$
and all $k$,
$$
\| (\psi(t)u_0 - \psi_k(t)u_0)\|_{Y'_T(s')} \leq  C M_k^{s'-s}T^{-1} \Lambda\; .
$$
\end{proposition}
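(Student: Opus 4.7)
The idea is to exploit the fact that the difference $\psi(t)u_0-\psi_k(t)u_0$ satisfies a Duhamel formula whose driving term splits naturally into a cutoff error (which decays like $M_k^{s'-s}$) and a Lipschitz-in-the-difference term that can be absorbed via a bootstrap.

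Write $N(u)=\chi|u|^2u$ and $N_k(u)=\Pi_k P_k(\chi|\Pi_k u|^2\Pi_k u)$. Both $\psi(t)u_0$ and $\psi_k(t)u_0$ agree at $t=0$, so Duhamel's formula on $[0,t]$ gives
\[
\psi(t)u_0-\psi_k(t)u_0=-i\int_0^t L(t-\tau)\Big(N(\psi(\tau)u_0)-N_k(\psi_k(\tau)u_0)\Big)d\tau.
\]
Split the integrand as $\bigl(N(\psi u_0)-N_k(\psi u_0)\bigr)+\bigl(N_k(\psi u_0)-N_k(\psi_k u_0)\bigr)$. The first bracket involves only the true solution and the frequency cutoffs; using the identity $N-N_k=(1-\Pi_k P_k)N+\Pi_k P_k(\chi(|u|^2u-|\Pi_k u|^2\Pi_k u))$ together with the standard smoothing bound $\|(1-\Pi_k)f\|_{H^{s'}}\lesssim M_k^{s'-s}\|f\|_{H^s}$, $\|(1-P_k)f\|_{H^{s'}}\lesssim M_k^{s'-s}\|f\|_{H^s}$, as well as the algebra-type estimate $\||u|^2u\|_{H^s}\lesssim(\|u\|_{H^s}+\|u\|_{L^\infty})^3$ combined with the hypotheses on $\chi$, one gets a bound of the form $C M_k^{s'-s}\Lambda^3$ uniformly on the local existence interval, using Proposition \ref{prop-lwp} to control $\psi(\tau)u_0$.

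The second bracket $N_k(\psi u_0)-N_k(\psi_k u_0)$ is Lipschitz-continuous in $\psi u_0-\psi_k u_0$, with Lipschitz constant controlled by $\Lambda^2$, again via the algebra property and the $L^\infty$-boundedness of $\Pi_k$. Combining these two pieces with the Strichartz estimate on the torus $\pi N_k\mathbb{T}$ (scale-invariant constants), I would obtain, on a sub-interval of length $\tau_0\lesssim\Lambda^{-2\gamma}$ given by Proposition \ref{prop-lwp},
\[
\|\psi(t)u_0-\psi_k(t)u_0\|_{Y_{\tau_0}(s',\pi N_k)}\leq C\tau_0^{\gamma'}\Lambda^3 M_k^{s'-s}+C\tau_0^{\gamma'}\Lambda^2\|\psi(t)u_0-\psi_k(t)u_0\|_{Y_{\tau_0}(s',\pi N_k)}.
\]
Choosing the implicit constant so that $C\tau_0^{\gamma'}\Lambda^2\leq 1/2$, the second term is absorbed and one obtains a local bound $\lesssim M_k^{s'-s}\Lambda$.

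To reach time $T\leq 1$, divide $[0,T]$ into $\sim T/\tau_0\sim T\Lambda^{2\gamma}$ sub-intervals, restart the argument at each step using a uniform bound $\|L(\tau)(\psi(t_j)u_0)\|_{Z_{t_j}(s)}\lesssim\Lambda$ (which follows from the local well-posedness and the persistence of regularity), and sum the contributions: the total error grows at most like the number of iterations times $M_k^{s'-s}\Lambda$, that is, $\lesssim T\Lambda^{2\gamma+1}M_k^{s'-s}\sim T^{-1}\cdot\Lambda\cdot M_k^{s'-s}$ after expressing $\Lambda^{2\gamma}$ as $\tau_0^{-1}$. Finally apply Lemma \ref{lem-jenaigravemarre} with a suitable choice of $R$ (large enough that $C_\alpha(R)$ is absorbed in the remainder) to pass from the torus-localized norm $Y_T(s',\pi N_k)$ to the weighted $Y'_T(s')$ norm on the full line.

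The main obstacle is the bookkeeping of the iteration over sub-intervals: one must ensure that the a priori $Z_{t_j}(s)$-norm of $L(\tau)$ applied to $\psi(t_j)u_0$ (and similarly for $\psi_k(t_j)u_0$) remains controlled by $\Lambda$ independently of $j$, so that the local existence time $\tau_0$ does not shrink as one iterates; this requires careful use of Strichartz together with the $L^2$-invariance, or equivalently a uniform bound coming from the fact that all constants in Proposition \ref{prop-lwp} are independent of $k$ and $s\in[s_\infty,s_0]$.
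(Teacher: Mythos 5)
Your overall skeleton (Duhamel, split into a cutoff error of size $M_k^{s'-s}$ plus a Lipschitz term absorbed by smallness of $CT^{\gamma'}\Lambda^2$, then pass from the periodic norm to $Y'_T(s')$ via Lemma \ref{lem-jenaigravemarre}) matches the paper's strategy for the terms it calls $F_1$ and $F_3$. But there are two genuine gaps.

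First, your treatment of the periodization error is wrong. You write $N-N_k=(1-\Pi_kP_k)N+\cdots$ and claim a smoothing bound $\|(1-P_k)f\|_{H^{s'}}\lesssim M_k^{s'-s}\|f\|_{H^s}$. The operator $P_k$ is periodization (it makes $\chi|u|^2u$ a $2\pi N_k$-periodic function), not a frequency cutoff, so there is no frequency gain from $1-P_k$ and this estimate is false. The paper isolates this contribution as $F_2=\Pi_k(1-P_k)\chi|\psi(\tau)u_0|^2\psi(\tau)u_0$ and handles it by an entirely different mechanism: it measures the difference only on the window $|x|\leq R_k=\pi N_k(1-\tfrac1k)$ and invokes the finite-propagation-speed estimate for $L(t)\Pi_k$ from Appendix A. Since the spurious periodic copies created by $1-P_k$ sit at distance at least $\pi N_k/k$ from that window, choosing $T$ with $3TM_k\leq \pi N_k/k$ makes the main contribution $A$ vanish, and the remaining kernel tail gives $B\leq T^{\gamma'-1}M_k^{-2}\Lambda^3$. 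This is in fact where the factor $T^{-1}$ in the stated bound comes from; your proposal produces no mechanism for it.

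Second, the iteration over $\sim T\Lambda^{2\gamma}$ subintervals is both unnecessary and unsound. The proposition is a purely local statement: $T$ here is the local existence time $T=\frac1{C\Lambda^{2\gamma}}$ of Proposition \ref{prop-lwp} (this is how Propositions \ref{prop-xmoinscv} and \ref{prop-zcv} use it), so a single application of the contraction argument suffices. If you did iterate, the recursion would be $e_{j+1}\leq Ce_j+CM_k^{s'-s}\Lambda$ with $C>1$, giving geometric (not linear) growth in the number of steps; the uniform bound $\|L(\tau)\psi(t_j)u_0\|_{Z_{t_j}(s)}\lesssim\Lambda$ at every restart does not follow from local well-posedness or ``persistence of regularity'' (it is exactly the probabilistic globalization handled in Section 4 via the sets $A_{k,n}(\Lambda)$ and measure invariance, and note $\varphi\notin L^2$ a.s., so $L^2$-conservation gives nothing here); and the closing arithmetic $T\Lambda^{2\gamma+1}\sim T^{-1}\Lambda$ is incorrect, since $T\Lambda^{2\gamma}=T/\tau_0$ equals $T^{-1}$ only if $T^2=\tau_0$, which is not the situation at hand.
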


\begin{proof} Because $\psi(t)$ is the solution of \eqref{NLS}, we have 
$$
\psi(t) u_0  = L(t) u_0 + i\int_{0}^t L(t-\tau) \chi |\psi(\tau)u_0|^2\psi(\tau) u_0 d\tau\; .
$$
Similarly, we have 
$$
\psi_k(t) u_0  = L(t) u_0 + i\int_{0}^t L(t-\tau)\Pi_k P_k (\chi | \Pi_k \psi_k(\tau)u_0|^2\Pi_k \psi_k(\tau) u_0 )d\tau\; .
$$
Therefore, we can write
$$
\psi(t) u_0 - \psi_k(t) u_0  = i\int_{0}^t L(t-\tau) \Big( \chi |\psi(\tau)u_0|^2\psi(\tau) u_0  - \Pi_k P_k (\chi |\Pi_k \psi_k(\tau)u_0|^2\Pi_k \psi_k(\tau) u_0 )\Big) d\tau \; .
$$
We set
$$
F = \Big( \chi |\psi(\tau)u_0|^2\psi(\tau) u_0  - \Pi_k P_k (\chi |\Pi_k \psi_k(\tau)u_0|^2\Pi_k \psi_k(\tau) u_0 )\Big) \; .
$$
We divide $F$ into three parts $F = F_1+ F_2+F_3$ where 
\begin{eqnarray*}
F_1 &=& (1-\Pi_k) \chi|\psi(\tau) u_0|^2 \psi(\tau)u_0 \; , \\
F_2 &=& \Pi_k (1- P_k) \chi|\psi(\tau) u_0|^2 \psi(\tau) u_0\; , \\
F_3 &=& \Pi_k P_k \chi \Big( |\psi(\tau) u_0|^2 \psi(\tau) u_0 - |\Pi_k \psi_k(\tau) u_0|^2 \Pi_k \psi_k(\tau)u_0\Big) \; .
\end{eqnarray*}

Let $R_k = \pi N_k (1-\frac1k)$.

For $i=1,2,3$, let 
$$
I.i = \|\int_{0}^t L(t-\tau) F_i (\tau) d\tau\|_{Y_T(s',R_k)}
$$
such that $\|\psi(t) u_0 - \psi_k(t) u_0\|_{Y_T(s',R_k)} \leq I.1 +I.2+I.3$. 

We estimate $I.1$. We have 
$$
I.1 \leq \|\int_{0}^t L(t-\tau) F_1 (\tau) d\tau\|_{Y_T(s')} \leq \int_{0}^t \|F_1 (\tau) \|_{H^{s'}(+\infty)} d \tau
$$
As $\Pi_k$ is a smooth cutoff over the frequency $M_k$, we have 
$$
I.1 \leq \frac{M_k}{2}^{s'-s} \int_{0}^t \|\chi|\psi(\tau) u_0|^2 \psi(\tau)u_0\|_{H^{s}} \leq C M_k^{s'-s} T^{\gamma'}\|1_{t\in [-T,T]}\chi^{1/3} \psi(\tau) u_0\|_{Z(s)}^3\; .
$$
We have that $\psi(t)u_0 = L(t)u_0 + \psi'(t)u_0$ with 
$$
\|\chi^{1/3}L(t)u_0\|_{Z(s)} \leq \Lambda
$$
by hypothesis and 
$$
\|1_{[-T,T]}\chi^{1/3} \psi'(t)u_0\|_{Z(s)} \leq C_\chi \|\psi'(t)u_0\|_{Y_T(s)}\lesssim \Lambda
$$
thanks to local well-posedness. Therefore,
$$
I.1 \leq C T^{\gamma'} \Lambda^3 M_k^{s'-s}\; .
$$

For $I.2$, we use the finite propagation speed. We have $I.2 \leq A+B$ with
$$
A = \| \int_{0}^t L(t-\tau) \Pi_k 1_{R_k + 2TM_k}(1-P_k) \chi|\psi(\tau) u_0|^2 \psi(\tau) u_0\|_{Y_T(s')}\; ,
$$
and 
$$
B = \frac1{T M_k} \int_{0}^t \sup_y \| D^{s'} (1-P_k) |\psi(\tau) u_0|^2 \psi(\tau) u_0\|_{L^2([y-R_k, y+R_k])}\; .
$$
We choose $T$ such that $3T M_k  \leq \frac{\pi N_k}{k}$, that is $T \leq \frac{ \pi N_k}{3k M_k}$, we can choose $T \leq C = \min_k  \frac{ \pi N_k}{3k M_k} $. This makes $A = 0$.

For $B$, we have 
$$
\| D^{s'}  |\psi(\tau) u_0|^2 \psi(\tau) u_0\|_{L^2([y-R_k, y+R_k])} \leq \|  |\psi(\tau) u_0|^2 \psi(\tau) u_0 \|_{H^{s'}(\R)}
$$
and
$$
\| D^{s'} P_k  |\psi(\tau) u_0|^2 \psi(\tau) u_0\|_{L^2([y-R_k, y+R_k])} \leq \|  |\psi(\tau) u_0|^2 \psi(\tau) u_0 \|_{H^{s'}(\pi N_k)}
$$
as $R_k$ is less than $\pi N_k$ and $P_k f$ is $2\pi N_k$ periodic for all $f$. Finally,
$$
B\leq \frac{T^{\gamma' - 1}}{M_k^2} \Lambda^3\; .
$$
 
For $I.3$, we set $q'$ such that $\frac12 -\frac1{q'} = s-s'$ and $q = \frac1{1 - \frac1{q'}}$. We have 
\begin{multline*}
 \|\chi \Big( |\psi(\tau) u_0|^2 \psi(\tau) u_0 - |\Pi_k \psi_k(\tau) u_0 |^2 \Pi_k \psi_k(\tau) u_0\|_{H^{s'}(\pi N_k)}   \\
\leq  \|\chi \Big( |\psi(\tau) u_0|^2 \psi(\tau) u_0 - |\Pi_k \psi(\tau) u_0 |^2 \Pi_k \psi(\tau) u_0\|_{H^{s'}(\pi N_k)} + \\
\|\chi \Big( |\Pi_k \psi(\tau) u_0|^2 \Pi_k \psi(\tau) u_0 - |\Pi_k \psi_k(\tau) u_0 |^2 \Pi_k \psi_k(\tau) u_0\|_{H^{s'}(\pi N_k)} \; .
\end{multline*}
We have 
\begin{multline*}
\|\chi \Big( |\psi(\tau) u_0|^2 \psi(\tau) u_0 - |\Pi_k \psi(\tau) u_0 |^2 \Pi_k \psi(\tau) u_0\|_{H^{s'}(\pi N_k)} \leq \|\chi^{1/3} (1-\Pi_k) \psi(\tau)u_0\|_{H^{s'}}\|\chi^{2/3}(|P_k \psi(\tau) u_0|^2 +\\
|\psi(\tau) u_0|^2)|\|_{L^\infty} + \|\chi^{1/3} (1-\Pi_k) \psi(\tau)u_0\|_{L^q}\|\chi^{2/3}(|P_k \psi(\tau) u_0|^2 + |\psi(\tau) u_0|^2)|\|_{W^{s',q'}}\; .
\end{multline*}
 Since $H^s$ is embedded in $W^{s',q'}$ we have 
$$
\|\chi^{2/3}|P_k \psi(\tau) u_0|^2 + |\psi(\tau) u_0|^2|\|_{W^{s',q'}} \lesssim (\|\psi(\tau) u_0\|_{H^s} + \|\psi(\tau) u_0\|_{L^\infty})^2\; .
$$
We also have 
$$
 \|\chi^{1/3} (1-\Pi_k) \psi(\tau)u_0\|_{H^{s'}} \leq C_\chi M_k^{s-s'} \|\an{x}^{-\alpha} D^s \psi(\tau) u_0\|_{L^2}
$$
and 
\begin{multline*}
 \|\chi^{1/3} (1-\Pi_k) \psi(\tau)u_0\|_{L^q} \leq  \|\chi^{1/3} (1-\Pi_k) \psi(\tau)u_0\|_{L^2}^\theta  \|\chi^{1/3} (1-\Pi_k) \psi(\tau)u_0\|_{L^\infty}^{1-\theta}\\
 \leq C_\chi M_k^{-s \theta} \|\psi(\tau)u_0\|_{H^s}^\theta \|\psi(\tau) u_0\|_{L^\infty}^{1-\theta}
\end{multline*}
with $\theta = \frac2{p}$ which makes $s\theta \geq s-s'$. Finally, we get
$$
I.3 \leq C T^{\gamma'} \Lambda^2 \|\chi^{1/3} \psi(t)u_0 - \psi_k(t) u_0\|_{Y_T(s', \pi N_k)} + C T^{\gamma' }M_k^{s'-s} \Lambda^3
$$
$$
I.3 \leq C T^{\gamma'} \Lambda^2 \| \psi(t)u_0 - \psi_k(t) u_0\|_{Y'_T(s')}\; .
$$

In the end, we have 
$$
\| \psi(t)u_0 - \psi_k(t) u_0\|_{Y'_T(s')} \leq C(R_k) + C M_k^{s'-s}
$$
and $C(R_k) = C ( \langle R \rangle^{-\alpha} + \langle R \rangle^{1-\alpha} (\pi N_k)^{-1}) \leq C (\pi N_k)^{-\alpha} \leq C M_k^{s'-s}$ as $N_k = 2^k$ and $M_k = k$. \end{proof}

\paragraph{Further convergences}

We need further convergence to be able to extend the local properties to global times and to prove the invariance of the measure.

\begin{proposition}\label{prop-xmoinscv}Assume that $u_0$ is such that 
$$
\| L(t)u_0\|_{Z(s)} \leq \Lambda \; .
$$
Then, for all $t \in [-T,T]$ with $T = \frac1{C \Lambda^{2\gamma}}$, the sequence $\psi_k(t)u_0$ converges towards $\psi(t) u_0$ in $X^{-1/2-}$ with a rate of convergence independent from $u_0$, namely
$$
d(\psi_k(t), \psi(t)) \leq C M_k^{-\beta} \Lambda \frac1T
$$
for some $\beta > 0$.
\end{proposition}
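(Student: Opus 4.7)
The plan is to control each seminorm $p_\sigma$ (for $\sigma=-\tfrac12-\tfrac1l$, $l\in\N^{*}$) of the difference $f(t):=\psi(t)u_0-\psi_k(t)u_0$ by reducing it via the Duhamel formula to an integral of $L^2_x$-norms of the difference of nonlinearities, and then applying the splitting $G=-F_1-F_2-F_3$ used in the proof of Proposition \ref{prop-luc}.

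Since $\psi$ and $\psi_k$ share the linear part $L(t)$, Duhamel gives $f(t)=i\int_0^t L(t-s)\,G(s)\,ds$ with
$$
G(s) \;=\; \chi|\psi(s)u_0|^2\psi(s)u_0 \;-\; \Pi_k P_k\bigl(\chi|\Pi_k\psi_k(s)u_0|^2\Pi_k\psi_k(s)u_0\bigr).
$$
Applying $D^\sigma L(\tau)$, using commutativity with $L(t-s)$ and Minkowski in $s$, and performing the change of variable $\tau\mapsto\tau+t-s$ (which only distorts $\langle\tau\rangle$ by a bounded factor since $|t-s|\leq 2T\leq 2$), then bounding $\langle x\rangle^{-2}\leq 1$, using the $L^2_x$-isometry of $L(\tau)$ and the fact that $\langle\tau\rangle^{-2}\in L^2_\tau$, yields the reduction
$$
p_\sigma(f(t)) \;\leq\; C\int_0^t \|D^\sigma G(s)\|_{L^2_x}\,ds.
$$

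I then estimate each $F_i$ separately. On $F_1=(1-\Pi_k)\chi|\psi(s)u_0|^2\psi(s)u_0$ the Fourier cutoff localizes to $|\xi|\geq M_k/2$, so $D^\sigma$ gains the factor $M_k^\sigma$; rewriting $\chi|\psi(s)u_0|^2\psi(s)u_0=(\chi^{1/3}\psi(s)u_0)^3$ and applying H\"older in time together with the $L^p_sL^6_x$ bound on $\chi^{1/3}\psi(s)u_0$ (which comes from the hypothesis $\|L(t)u_0\|_{Z(s)}\leq\Lambda$ and from LWP applied to $\psi'(s)u_0$) controls the time integral by $CM_k^\sigma T^{\gamma'}\Lambda^3$. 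On $F_2=\Pi_k(1-P_k)\chi|\psi(s)u_0|^2\psi(s)u_0$, the tail estimate $\|\chi\|_{L^2(|x|\geq\pi N_k)}\lesssim N_k^{1/2-3\alpha}$, valid because $\alpha>1$, provides the gain in $N_k=2^k$. On $F_3=\Pi_k P_k\chi(|\psi(s)u_0|^2\psi(s)u_0-|\Pi_k\psi_k(s)u_0|^2\Pi_k\psi_k(s)u_0)$, the Lipschitz bound $|\,|z|^2z-|w|^2w\,|\leq 3(|z|^2+|w|^2)|z-w|$ combined with Proposition \ref{prop-luc} yields the gain $M_k^{s'-s}T^{-1}\Lambda^3$ for any $s'<s$. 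With $\beta:=\min(|\sigma|,\,3\alpha-\tfrac12,\,s-s')>0$, the three contributions sum to
$$
p_\sigma(f(t)) \;\leq\; C\,M_k^{-\beta}\,\frac{\Lambda^{p_0}}{T}
$$
for some $p_0\geq 1$, and the $2^{-l}$-weighted sum defining $d$ then delivers the stated estimate, the higher powers of $\Lambda$ being absorbed into a slight reduction of $\beta$ using $\Lambda\geq 1$ and $T\Lambda^{2\gamma}\sim 1$.

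The main obstacle is to estimate $\|D^\sigma F_1\|_{L^2}$ (and analogously $F_2$) using only the weighted $Z$-norm on the linear evolution $L(s)u_0$, since $L(s)u_0$ is not in $L^\infty_x$ as a function of $x$ alone. The systematic use of $\chi^{1/3}\lesssim\langle x\rangle^{-\alpha}$ to absorb $\chi$ into each factor of $u$ is what converts the unavailable unweighted $L^\infty_x$ estimate into a weighted estimate covered by the hypothesis, and this is the reason the assumption $\alpha>1$ appears in the rate via the $F_2$ tail.
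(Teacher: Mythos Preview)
There is a genuine gap in your reduction step. You write
\[
p_\sigma(f(t)) \;\leq\; C\int_0^t \|D^\sigma G(s)\|_{L^2_x(\R)}\,ds
\]
after ``bounding $\langle x\rangle^{-2}\leq 1$ and using the $L^2_x$-isometry of $L(\tau)$''. But the right-hand side is infinite. The forcing $G(s)$ contains the term $\Pi_kP_k\bigl(\chi|\Pi_k\psi_k(s)u_0|^2\Pi_k\psi_k(s)u_0\bigr)$, and $P_k$ is the periodisation operator onto $2\pi N_k$-periodic functions. A nontrivial $2\pi N_k$-periodic function is never in $L^2(\R)$, so $G(s)\notin L^2(\R)$ and neither is $D^\sigma G(s)$. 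The same obstruction kills your treatment of $F_2$ and $F_3$ individually: both carry the $P_k$ and are therefore periodic, so the estimates you write for them (the $\|\chi\|_{L^2(|x|\geq\pi N_k)}$ tail bound for $F_2$, and the Lipschitz bound combined with Proposition~\ref{prop-luc} for $F_3$) do not control $\|D^\sigma F_i\|_{L^2(\R)}$. Your $F_2$ argument additionally misreads $(1-P_k)$ as a spatial cutoff to $\{|x|\geq\pi N_k\}$, which it is not.

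The weight $\langle x\rangle^{-2}$ in the seminorm $p_\sigma$ is precisely what makes the periodic piece integrable, and it cannot be discarded. The paper's proof therefore does \emph{not} pass through Duhamel and an unweighted $L^2$ bound on $G$. Instead it works directly with the difference $\psi'(t)u_0-\psi'_k(t)u_0$, splits the $x$-integral at a scale $R=M_k^\varepsilon$, and uses the weight to gain $\langle R\rangle^{-1}$ in the region $|x|\geq R$ (together with Sobolev embedding $D^\sigma:L^2\to L^\infty$, valid since $\sigma<-1/2$, applied separately on $\R$ and on the torus). In the region $|x|\leq R$ it introduces a further frequency cutoff $\Pi'_k$ at scale $M_k^\varepsilon$ and invokes the finite-propagation-speed estimate of Appendix~A to localise $L(\tau)\Pi'_k$, which is what allows one to feed in the \emph{weighted} convergence of Proposition~\ref{prop-luc}. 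To repair your argument you would need to keep the weight throughout and handle the periodic part by a spatial decomposition of this kind; the Duhamel-then-drop-the-weight route does not close.
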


\begin{proof}Let $\sigma < -1/2$. We estimate
$$
\|\langle x \rangle^{-2} \an{\tau}^{-2} L(\tau) D^\sigma (\psi(t) u_0 - \psi_k(t) u_0) \|_{L^2_{\tau,x}} \; .
$$
We fix $\tau$. 

Let $R = M_k^\beta $. Above $R$, we have 
$$
\|1_{|x|\geq R}\langle x \rangle^{-2} L(\tau) D^\sigma (\psi(t) u_0 - \psi_k(t) u_0) \|_{L^2_x} \leq \langle R \rangle^{-1} \| L(\tau) D^\sigma (\psi(t) u_0 - \psi_k(t) u_0) \|_{L^\infty_x} \; .
$$
We use that $\psi(t) u_0 - \psi_k(t) u_0  = \psi'(t) u_0 - \psi_k'(t) u_0) $ and that with the choice of $u_0$, $\|\psi'_k(t) u_0\|_{H^s} \leq C \Lambda$,  $\|\psi'(t) u_0\|_{H^s} \leq C \Lambda$. We have 
$$
\| L(\tau) D^\sigma (\psi(t) u_0 - \psi_k(t) u_0) \|_{L^\infty_x} \leq \| L(\tau) D^\sigma (\psi'(t) u_0) \|_{L^\infty_x} + \| L(\tau) D^\sigma (\psi'_k(t) u_0) \|_{L^\infty_x} \; .
$$
Thanks to Sobolev embedding ($\sigma < -1/2$), we have 
$$
\| L(\tau) D^\sigma (\psi(t) u_0 - \psi_k(t) u_0) \|_{L^\infty_x} \leq C \| L(\tau)  (\psi'(t) u_0) \|_{H^s} + \| L(\tau)  (\psi'_k(t) u_0) \|_{H^s} \leq C \Lambda \; .
$$

Under $R$, we  fix  $\varepsilon < s/ \alpha$. Write $\psi(t) u_0 - \psi_k(t) u_0$ as 
$$
\psi(t) u_0 - \psi_k(t) u_0 = (1 - \Pi'_k)\psi'(t) u_0 + \Pi'_k ( \psi(t)u_0 - \psi_k(t) u_0) - (1-\Pi'_k) \psi'_k(t)u_0 
$$
where $\Pi_k'$ is the Fourier multiplier by $\eta (\frac{n}{M_k^\varepsilon})$. We have 
$$
\|D^\sigma L(\tau) (1- \Pi'_k) \psi'(t)u_0\| _{L^\infty} \leq C\|L( \tau) (1- \Pi'_k)\|_{L^2} \leq M_k^{-s\varepsilon}  \|\psi'(t)\|_{H^s}
$$
and with the norm over time 
$$
\|\an{\tau}^{-2}D^\sigma L(\tau) (1- \Pi'_k) \psi'(t)u_0\| _{L^2_t,L^\infty} \leq C \Lambda M_k^{-s\varepsilon} \; .
$$
For the same reasons,
$$
\|\an{\tau}^{-2} D^\sigma L(\tau) (1- \Pi'_k) \psi'_k(t)u_0\| _{L^2_t,L^\infty} \leq C M_k^{-s \varepsilon} \Lambda \; .
$$
For the last term, we use the finite speed propagation
$$
\|D^\sigma L(\tau) \Pi'_k ( \psi(t)u_0 - \psi_k(t) u_0)\| _{L^\infty (|x|\leq R)} \leq A+B
$$
with
$$
A= \|D^\sigma L(\tau) \Pi'_k 1_{|x|\leq R + 3\an \tau M_k^\varepsilon} ( \psi(t)u_0 - \psi_k(t) u_0)\|_{L^\infty}
$$
and 
\begin{multline*}
B = \frac1{M_k^{2\varepsilon}} \|\an{\tau}^{-3}\sup_y \|D^\sigma \Pi_k' (\psi(t) u_0 - \psi_k(t) u_0)\|_{L^\infty([y-R,y+R])}\|_{L^2_t} \leq \\
\frac1{M_k^{2\varepsilon}T} (\|\an{\tau}^{-3} \psi'(t)\|_{L^2_t,L^2(\R)} + \|\an{\tau}^{-3}\psi'_k(t)\|_{L^2_t,L^2(\pi N_k)}
\end{multline*}
(we can assume $R \leq \pi N_k$). Hence $B \leq \frac{1}{ M_k^{2\varepsilon}} \Lambda^3 T^{\gamma'}$. 

By Sobolev embedding
$$
A \leq \|  1_{|x|\leq R + 3|\tau| M_k^\varepsilon} ( \psi(t)u_0 - \psi_k(t) u_0)\|_{L^2}
$$
We use the convergence of $\psi_k(t) u_0$ towards $\psi(t)$ in $L^2$ with the weight $\langle x \rangle^{-\alpha}$, we have 
$$
\|\an{\tau}^{-2} D^\sigma L(\tau) \Pi'_k ( \psi(t)u_0 - \psi_k(t) u_0)\|_{L^2_t,L^\infty (|x|\leq R)} \leq C   \langle  R + 3|\tau| M_k^\varepsilon\rangle^{\alpha}M_k^{-s}T^{-1} \; .
$$
We use $\beta = \varepsilon$ and $\beta' = \min ( \beta, s\varepsilon , s- \alpha \varepsilon)$, we have 
$$
\|\langle x \rangle^{-2} \an{\tau}^{-2} L(\tau) D^\sigma (\psi(t) u_0 - \psi_k(t) u_0) \|_{L^2_{\tau,x}} \leq C \| \langle \tau \rangle^{\alpha-2} \|_{L^2}M_k^{-\beta'}
$$
We had chosen $\alpha $ such that $2 - \alpha > 1/2$. Hence,
$$
d(\psi_k(t) u_0 , \psi(t) u_0) \leq C M_k^{-\beta'} T^{-1}\; .
$$
\end{proof}

\begin{proposition}\label{prop-zcv}Assume that $u_0$ is such that 
$$
\| L(t)u_0\|_{Z(s)} \leq \Lambda \; .
$$
Then, for all $t \in [-T,T]$ with $T = \frac1{C \Lambda^{2\gamma}}$ and all $s'< s$, the sequence $L(\tau)\psi_k(t)u_0$ converges towards $L(\tau)\psi(t) u_0$ in $Z'(s')$ with a rate of convergence independent from $u_0$, namely
$$
\| L(\tau) (\psi(t)u_0 - \psi_k(t)) u_0\|_{Z'(s')} \leq C M_k^{-\beta} \Lambda \frac1T
$$
for some $\beta > 0$.
\end{proposition}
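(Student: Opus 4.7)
The plan is to adapt the Duhamel argument of Proposition \ref{prop-luc} to the weighted-in-time space $Z'(s')$. I would start from the identity
$$
L(\tau)\bigl(\psi(t)u_0 - \psi_k(t)u_0\bigr) = i\int_0^t L(\tau+t-\tau')F(\tau')\,d\tau',
$$
with $F(\tau') = \chi|\psi(\tau')u_0|^2\psi(\tau')u_0 - \Pi_k P_k\bigl(\chi|\Pi_k\psi_k(\tau')u_0|^2\Pi_k\psi_k(\tau')u_0\bigr)$, and reuse the decomposition $F = F_1+F_2+F_3$ introduced in the proof of Proposition \ref{prop-luc}.

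The key auxiliary estimate is
$$
\|L(\sigma)g\|_{Z'(s')} \lesssim \|g\|_{H^{s'}}.
$$
For the $L^p_\sigma L^2_x$ contribution this is immediate from $L^2$-isometry of $L(\sigma)$, the trivial bound $\an{x}^{-\alpha}\leq 1$, and the fact that $\an{\sigma}^{-2}\in L^p_\sigma$ since $p>4$. For the $L^p_\sigma L^\infty_x$ contribution I would slice the time axis into unit intervals $[n,n+1]$, apply on each piece the local Strichartz estimate $\|L(\sigma-n)(L(n)g)\|_{L^p([0,1],L^\infty_x)}\lesssim \|g\|_{H^{s'}}$ (available because $p$ has been chosen so that $s_\infty>1/2-1/p$), and sum against $\an{n}^{-2p}$, which is summable since $p>4$.

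Applying Minkowski's inequality in $\tau'$ to the Duhamel integral, then the substitution $\sigma=\tau+t-\tau'$ together with the Peetre-type bound $\an{\sigma-t+\tau'}^{-2}\leq 2\an{\sigma}^{-2}\an{t-\tau'}^2$, and using $|t|,|\tau'|\leq T\leq 1$ to absorb $\an{t-\tau'}^2$ into a constant, the auxiliary estimate gives
$$
\|L(\tau)\bigl(\psi(t)u_0-\psi_k(t)u_0\bigr)\|_{Z'(s')} \lesssim \int_0^t \|F(\tau')\|_{H^{s'}}\,d\tau' + (\text{finite-speed remainder}),
$$
where the finite-speed remainder coming from $F_2$ must be extracted separately by the same kernel argument on $L(\tau)\Pi_k$ as in Proposition \ref{prop-luc}, because the pure Duhamel bound is not enough to gain smallness from $F_2$ alone. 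The piece $F_1$ then contributes $CT^{\gamma'}\Lambda^3 M_k^{s'-s}$ by the high-frequency truncation identity $\|(1-\Pi_k)h\|_{H^{s'}}\lesssim M_k^{s'-s}\|h\|_{H^s}$; the remainder from $F_2$ yields $CT^{\gamma'-1}\Lambda^3 M_k^{-2}$; and for $F_3$ the cubic-difference Moser estimate gives $\|F_3\|_{L^1([0,t],H^{s'})}\lesssim T^{\gamma'}\Lambda^2\|\psi(t)u_0-\psi_k(t)u_0\|_{Y'_T(s')}$, at which point plugging in the already-established bound of Proposition \ref{prop-luc} closes the argument with the claimed rate $CM_k^{-\beta}\Lambda T^{-1}$.

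The main technical point will be establishing the auxiliary Strichartz-type estimate into $Z'(s')$ and correctly handling the $F_2$ remainder — since a naive bound in $\int \|F_2\|_{H^{s'}}$ captures no smallness, one must mimic the decomposition $I.2 = A+B$ of Proposition \ref{prop-luc} to exploit the near-finite propagation of $L(\tau)\Pi_k$. Once these two ingredients are in place, the proof is a weighted-in-time replay of Proposition \ref{prop-luc}, and the $T^{-1}$ factor in the final bound appears only through the invocation of that earlier proposition inside the $F_3$ term.
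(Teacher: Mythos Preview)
Your approach is genuinely different from the paper's, and there is a real gap in the handling of $F_2$.

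The paper does not redo the Duhamel argument at all. It works directly with the difference $\psi'(t)u_0-\psi'_k(t)u_0$ at the fixed time $t$, splits space into $\{|x|>R\}$ (where the weight $\an{x}^{-\alpha}$ and the $H^{s'}$ bounds from local well-posedness give smallness) and $\{|x|\leq R\}$ with $R=M_k^{\varepsilon}$, and on the inner region introduces a \emph{secondary} low-frequency cutoff $\Pi'_k$ at level $M_k^{\varepsilon}$. The high-frequency parts $(1-\Pi'_k)\psi'(t)u_0$ and $(1-\Pi'_k)\psi'_k(t)u_0$ are small by the usual gain $M_k^{(s'-s)\varepsilon}$, while for the low-frequency part $\Pi'_k(\psi(t)u_0-\psi_k(t)u_0)$ the near-finite-speed argument is applied to $L(\tau)\Pi'_k$. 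The crucial point is that the light cone now grows only like $|\tau|M_k^{\varepsilon}$, slow enough for the time weight $\an{\tau}^{-2}$ to absorb the factor $\an{R+3|\tau|M_k^{\varepsilon}}^{\alpha}$; one then closes with the weighted $Y'_T(s')$ bound of Proposition~\ref{prop-luc}.

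Your plan, by contrast, applies the finite-speed kernel argument to $L(\tau)\Pi_k$ with the full cutoff $M_k$. Because $\tau$ runs over all of $\R$ in the $Z'(s')$ norm, the resulting light cone has size $\sim|\tau|M_k$, and this cannot be compensated by the time weight while still extracting a negative power of $M_k$. Nor can you instead apply finite speed only to the inner flow $L(t-\tau')\Pi_k$ as in the $I.2=A+B$ split of Proposition~\ref{prop-luc}: that produces a spatially localised statement, and once you apply the outer $L(\tau)$ the localisation is destroyed. More bluntly, $F_2=\Pi_k(1-P_k)g$ is neither in $H^{s'}(\R)$ nor periodic, so your auxiliary estimate $\|L(\sigma)h\|_{Z'(s')}\lesssim\|h\|_{H^{s'}}$ does not apply to it. (The same remark applies to $F_3$, which is periodic; there you would need the periodic analogue of your auxiliary bound, which you did not state but which is easy.)

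The missing idea is precisely the secondary cutoff $\Pi'_k$ at the much lower level $M_k^\varepsilon$, applied to the solution difference rather than to the forcing. Once you have that, the Duhamel decomposition $F=F_1+F_2+F_3$ is unnecessary here.
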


\begin{proof} We proceed in the same way as for Proposition \ref{prop-xmoinscv}. We can apply the local uniform proposition (\ref{prop-luc}) with $s'$. Let $R = M_k^\varepsilon$. We divide the norm between what is included above and under $R$. Above $R$, we have, 
$$
\|1_{|x|\geq R} L(\tau) \psi'(t) u_0\|_{Z'(s')} \leq \an{R}^{-\alpha} \|L(\tau) \psi'(t) u_0\|_{Z'(s')} 
$$
and then we use Strichartz estimates 
$$
\|1_{|x|\geq R} L(\tau) \psi'(t) u_0\|_{Z'(s')}\leq C\an{R}^{-\alpha} \|\psi'(t)u_0\|_{H^{s'}}  \leq C\an{R}^{-\alpha} \Lambda \; .
$$
and for the norm of $\psi'_k(t)$ we divide $\{|x|\geq R\}$ in $\cup_m [2\pi m N_k +R, 2\pi N_k (m+1) + R]\cup [-R - (m+1) 2\pi N_k, -R - m 2\pi N_k]$ to get as in Lemma \ref{lem-jenaigravemarre}
$$
\|1_{|x|\geq R} L(\tau) \psi'(t) u_0\|_{Z'(s')}\leq C (\an{R}^{-\alpha} + \an{R}^{1-\alpha}(\pi N_k)^{-1}) \; .
$$

Under $R$, we write $\psi(t) u_0 - \psi_k(t) u_0$ as 
$$
\psi(t) u_0 - \psi_k(t) u_0 = (1 - \Pi'_k)\psi'(t) u_0 + \Pi'_k ( \psi(t)u_0 - \psi_k(t) u_0) - (1-\Pi'_k)\psi_k'(t) u_0 
$$
where $\Pi_k'$ is the Fourier multiplier by $\eta (\frac{n}{M_k^\varepsilon})$. We have, by Strichartz
$$
\| L(\tau) (1- \Pi'_k) \psi'(t)u_0\|_{Z'(s')} \leq C\| (1- \Pi'_k)\psi'(t)u_0\|_{H^{s'}} \leq M_k^{(s'-s)\varepsilon}  \|\psi'(t)u_0\|_{H^s} \leq C \Lambda M_k^{(s'-s)\varepsilon} \; .
$$
By dividing $\R$ into intervals of size $2\pi N_k$, we get
$$
\|  L(\tau) (1- \Pi'_k) \psi'_k(t)u_0\|_{Z'(s')}\leq \sum_{m} \an{2\pi N_k m }^{-\alpha} \|1_{|x|\leq \pi N_k}L(\tau) (1-\Pi'_k) \psi_k'(t)u_0\|_{Z'(s')}
$$
and by using Strichartz estimates 
$$
\|  L(\tau) (1- \Pi'_k) \psi'_k(t)u_0\|_{Z(s')} \leq C \an{\pi N_k}^{-\alpha} M_k^{(s'-s)\varepsilon} \Lambda \; .
$$
For the last term, we use the finite speed propagation
$$
\|1_{|x|\leq R}L(\tau) \Pi'_k (\psi(t)u_0 - \psi'_k(t) u_0) \|_{Z'(s')} \leq A+B
$$
with
$$
A= \| L(\tau) \Pi'_k 1_{|x|\leq R+ 3 \an \tau M_k^{\varepsilon}} (\psi'(t)u_0 - \psi'_k(t) u_0) \|_{Z'(s')}
$$
and 
$$
B = \frac1{M_k^{2\varepsilon} }\|  \an{\tau}^{-3} \sup_y \Big(\| \psi'(t) u_0 - \psi'_k (t) u_0\|_{L^\infty([y-R,y+R])} + \| \psi'(t) u_0 - \psi'_k (t) u_0\|_{H^{s' }([y-R,y+R])}\Big) \|_{L^2_\tau} \; .
$$
For $A$, we use again Strichartz estimates 
$$
A \leq \| \an{\tau}^{-2} 1_{|x|\leq R+ 3|\tau|M_k^{\varepsilon}} (\psi'(t)u_0 - \psi'_k(t) u_0) \|_{L^p_\tau,H^{s'}} \; .
$$
For $B$, we use the local bounds to get
$$
B \leq \frac{1}{M_k^{2\varepsilon}} \Lambda \; .
$$

Thanks to Proposition \ref{prop-luc}, we have
$$
A \leq \|\an{\tau}^{-2} (R+ 3\an \tau M_k^{\varepsilon})^{\alpha}\|_{L^p_t} \|\an{x}^{-\alpha} D^{s'}(\psi'(t)u_0 - \psi'_k(t) u_0) \|_{L^2_x} \leq CM_k^{\varepsilon \alpha} M_k^{s'-s} \Lambda T^{-1}\; .
$$

We choose $\varepsilon < (s-s') / \alpha$ and $\beta = \min (\varepsilon, s-s' - \alpha \varepsilon, (s-s') \varepsilon)$ and also $\alpha $ such that $2 - \alpha > 1/p$ to be able to integrate in time and conclude.  \end{proof}

\subsection{local continuity}

In this subsection, we prove that the flows $\psi_k$ are locally continuous in the initial datum.

\begin{proposition}\label{prop-loccont} Let $u_{0,1}$ and $u_{0,2}$ such that
$$
\| L(t) u_{0,i}\|_{Z(s)} \leq \Lambda \; .
$$
Then,
$$
\|\psi'_k(t)u_{0,1}- \psi'_k(t)u_{0,2}\|_{Y_T(s,\pi N_k)} \leq C\| L(t)(u_{0,1}-u_{0,2})\|_{Z(s)}
$$
with $C$ independent from $s, \Lambda ,k$.
\end{proposition}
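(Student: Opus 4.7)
The plan is to derive the estimate for $w(t) := \psi'_k(t) u_{0,1} - \psi'_k(t) u_{0,2}$ via a Duhamel representation on the periodic Strichartz spaces, together with a trilinear estimate and an absorption argument made possible by the specific choice $T = 1/(C\Lambda^{2\gamma})$ (so that $T^{\gamma'}\Lambda^2$ is a small, $\Lambda$-independent constant since $\gamma\gamma' = 1$).

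First I would write $w$ as a Duhamel integral. Set $a_i(\tau) = \Pi_k(L(\tau)u_{0,i} + \psi'_k(\tau)u_{0,i})$, so that
$$
w(t) = -i\int_0^t L(t-\tau)\,\Pi_k P_k\!\left[\chi\bigl(|a_1|^2 a_1 - |a_2|^2 a_2\bigr)\right] d\tau.
$$
Applying the periodic Strichartz estimate on $2\pi N_k\mathbb{T}$ (which is available uniformly in $k$, $s$, and $T\le 1$, as recalled in the notation section), one gets
$$
\|w\|_{Y_T(s,\pi N_k)} \le C\int_{-T}^{T} \bigl\|\chi\bigl(|a_1|^2 a_1 - |a_2|^2 a_2\bigr)\bigr\|_{H^s(\pi N_k)}\, d\tau.
$$

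Next I would bound the integrand by a trilinear estimate, writing $|a_1|^2a_1 - |a_2|^2 a_2$ as a sum of terms of the form $(a_1-a_2)\cdot(\text{two factors among }a_1,\bar a_1,a_2,\bar a_2)$, and factoring $\chi = \chi^{1/3}\cdot\chi^{1/3}\cdot\chi^{1/3}$ to place one $\chi^{1/3}$ on each of the three factors. Since $\chi^{1/3}\lesssim\langle x\rangle^{-\alpha}$ and $|D^{s_0}\chi^{1/3}|\lesssim \langle x\rangle^{-\alpha}$, a Leibniz/Moser-type estimate together with Sobolev embedding (exactly as in the proof of Proposition \ref{prop-lwp}) gives
$$
\bigl\|\chi\bigl(|a_1|^2a_1-|a_2|^2a_2\bigr)\bigr\|_{H^s(\pi N_k)} \le C\,\bigl\|\chi^{1/3}(a_1-a_2)\bigr\|_{H^s(\pi N_k)\cap L^\infty}\,\Bigl(\sum_{i=1,2}\|\chi^{1/3}a_i\|_{H^s\cap L^\infty}^2\Bigr),
$$
after which I split $a_1-a_2 = \Pi_k L(\tau)(u_{0,1}-u_{0,2}) + \Pi_k w(\tau)$ and use $\|\Pi_k\cdot\|_{L^p}\le C_p\|\cdot\|_{L^p}$. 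Controlling the $a_i$ factors uses the hypothesis $\|L(\tau)u_{0,i}\|_{Z(s)}\le\Lambda$ and the local well-posedness bound $\|\psi'_k(\tau)u_{0,i}\|_{Y_T(s,\pi N_k)}\le C\Lambda$ from Proposition \ref{prop-lwp}, yielding (after the time integration, which produces the gain $T^{\gamma'}$)
$$
\|w\|_{Y_T(s,\pi N_k)} \le C\,T^{\gamma'}\Lambda^2\Bigl(\|L(\tau)(u_{0,1}-u_{0,2})\|_{Z(s)} + \|w\|_{Y_T(s,\pi N_k)}\Bigr).
$$

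Finally, since $\gamma = 1/\gamma'$, the choice $T = 1/(C\Lambda^{2\gamma})$ made in Proposition \ref{prop-lwp} gives $T^{\gamma'}\Lambda^2 = C^{-\gamma'}$, which is independent of $\Lambda$ and can be made as small as needed by enlarging $C$ once and for all. This allows us to absorb the $\|w\|_{Y_T(s,\pi N_k)}$ term on the right into the left-hand side, producing the claimed inequality with a constant independent of $s\in[s_\infty,s_0]$, of $\Lambda\ge 1$, and of $k$.

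The main obstacle is the trilinear $H^s(\pi N_k)$ estimate: one needs to distribute $D^s$ over a product of three factors with non-smooth multiplier $\chi$, uniformly in the period $N_k$, and in a way that allows both $\|\chi^{1/3} L(\tau)(u_{0,i})\|_{Z(s)}$ (a non-periodic quantity) and $\|\chi^{1/3}\psi'_k(\tau)u_{0,i}\|_{Y_T(s,\pi N_k)}$ (a periodic one) to serve as the $\Lambda$ factors. This is exactly the structural estimate already exploited in Proposition \ref{prop-lwp}, so the argument reduces to a careful rerun of the fixed-point computation with the trilinear form replaced by its differenced version.
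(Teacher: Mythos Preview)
Your proof is correct and follows essentially the same approach as the paper: write the Duhamel formula for the difference $v_1-v_2$, apply the periodic Strichartz estimate, use the trilinear product estimate from the local well-posedness argument (Proposition~\ref{prop-lwp}) together with the bounds $\|L(t)u_{0,i}\|_{Z(s)}\le\Lambda$ and $\|\psi'_k(t)u_{0,i}\|_{Y_T(s,\pi N_k)}\le C\Lambda$ to obtain
$$
\|v_1-v_2\|_{Y_T(s,\pi N_k)}\le C\,T^{\gamma'}\Lambda^2\Bigl(\|L(t)(u_{0,1}-u_{0,2})\|_{Z(s)}+\|v_1-v_2\|_{Y_T(s,\pi N_k)}\Bigr),
$$
and absorb the last term using $T=1/(C\Lambda^{2\gamma})$. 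Your write-up is in fact more explicit than the paper's about the factoring $\chi=\chi^{1/3}\chi^{1/3}\chi^{1/3}$ and the absorption mechanism, but the argument is the same.
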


\begin{proof}
Let $v_i = \psi'(t) u_{0,i}$. We have 
$$
v_1- v_2 = i\int_{0}^t \Pi_k P_k \chi^{1/3}\Big( |L(\tau) u_{0,1}+ v_1|^2(L(\tau) u_{0,1}+ v_1) - |L(\tau) u_{0,2}+ v_2|^2(L(\tau) u_{0,2}+ v_2) \Big)d\tau
$$
and using Strichartz estimates
$$
\|v_1 - v_2\|_{Y_T(s, \pi N_k)} \leq C T^{\gamma'} \Lambda^2\Big(\|\chi^{1/3}L(t)(u_{0,1}-u_{0,2})\|_{Z'(s)} + \|v_1 - v_2\|_{Y_T(s, \pi N_k)}\Big)\; .
$$
As $T = \frac1{C\Lambda^{2\gamma}}$ with $C$ big enough, we get the result.
\end{proof}

\begin{proposition} For all $u,v$ such that
$$
\| L(t) u\|_{Z(s)} \leq \Lambda \; .
$$
and
$$
\| L(t) v\|_{Z(s)} \leq \Lambda \; .
$$
and all $t\in [-T,T]$ with $ T =\frac1{C\Lambda^{2\gamma}}$, we have 
$$
d(\psi_k(t)u, \psi_k(t) v )\leq C \|  L(\tau) (u-v)\|_{Z'(s)} + d(u,v)
$$
and 
$$
\|  L(\tau) (\psi_k(t)u - \psi_k(t)v)\|_{Z'(s)} \leq C \|  L(\tau) (u-v)\|_{Z'(s)} \; .
$$
\end{proposition}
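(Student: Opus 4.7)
The plan is to decompose $\psi_k(t)w = L(t)w + \psi_k'(t)w$ for each initial datum, which gives
\[
\psi_k(t)u - \psi_k(t)v = L(t)(u-v) + g, \qquad g := \psi_k'(t)u - \psi_k'(t)v \in E_k,
\]
and to handle the linear part via the continuity of $L(t)$ (Lemma~\ref{contl}) while controlling the Duhamel correction $g$ via Proposition~\ref{prop-loccont}, which yields $\|g\|_{H^s(\pi N_k)}\leq C\|L(t)(u-v)\|_{Z(s)}$. The key observation is that $g$ is $2\pi N_k$-periodic and band-limited, and since $L(\tau)$ commutes with translations, $L(\tau)g$ remains $2\pi N_k$-periodic for every $\tau\in\R$.

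For the second inequality, applying $L(\tau)$ to the splitting gives $L(\tau+t)(u-v) + L(\tau)g$. The first piece is controlled by the change of variables $\sigma=\tau+t$ together with $\langle\sigma-t\rangle\geq\langle\sigma\rangle/\langle t\rangle$ and $|t|\leq T\leq 1$, yielding $\|L(\tau+t)(u-v)\|_{Z'(s)}\leq C\|L(\tau)(u-v)\|_{Z'(s)}$. For the second piece, partitioning $\R$ into the translates $J_m=[2\pi m N_k-\pi N_k,2\pi m N_k+\pi N_k]$ and using periodicity reduces the $L^2_x$ component of the weighted spatial norm to $(\sum_m\langle 2\pi m N_k\rangle^{-2\alpha})^{1/2}\|L(\tau)g\|_{H^s(\pi N_k)}$, with the sum uniformly bounded in $k$ since $\alpha>1$; the $L^\infty_x$ component is trivially controlled by the $L^\infty$ over a single period. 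Splitting the $L^p_\tau$ integral into unit intervals weighted by $\langle\tau\rangle^{-2p}\in L^1_\tau$ and applying the Burq--G\'erard--Tzvetkov Strichartz estimate on $2\pi N_k\mathbb{T}$ to each interval then gives $\|L(\tau)g\|_{Z'(s)}\leq C\|g\|_{H^s(\pi N_k)}$. Combining with Proposition~\ref{prop-loccont} and with the elementary bound $\|L(t)(u-v)\|_{Z(s)}\leq C\|L(\tau)(u-v)\|_{Z'(s)}$ (from $\langle\tau\rangle^{-2}\geq1/2$ on $[-1,1]$) completes the second inequality.

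For the $d$-inequality, the subadditivity $(a+b)/(1+a+b)\leq a/(1+a)+b/(1+b)$ combined with the triangle inequality for the seminorms $p_{\sigma_l}$ splits each contribution to $d(\psi_k(t)u,\psi_k(t)v)$ into a linear and a Duhamel part. The linear part is bounded by $p_{\sigma_l}(L(t)(u-v))\leq Cp_{\sigma_l}(u-v)$, which is exactly Lemma~\ref{contl} applied with $|t|\leq 1$, and produces the $d(u,v)$ term upon summing over $l$. For the Duhamel part, since $L(\tau)$ is an isometry on $H^{\sigma_l}$ of the torus (being a Fourier multiplier of modulus one), we have $\|L(\tau)g\|_{H^{\sigma_l}(\pi N_k)}=\|g\|_{H^{\sigma_l}(\pi N_k)}\leq\|g\|_{H^s(\pi N_k)}$ for every $\tau$; the same periodic partition of $\R$ with weight $\langle x\rangle^{-2}$ (whose sum $\sum_m\langle 2\pi m N_k\rangle^{-4}$ is uniformly bounded in $k$) yields $\|\langle x\rangle^{-2}D^{\sigma_l}L(\tau)g\|_{L^2_x}\leq C\|g\|_{H^s(\pi N_k)}$ uniformly in $\tau$, and integration against $\langle\tau\rangle^{-4}\in L^1_\tau$ gives $p_{\sigma_l}(g)\leq C\|L(\tau)(u-v)\|_{Z'(s)}$ uniformly in $l$. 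Summing against the weights $2^{-l}$ produces the claim.

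The main obstacle is the uniform-in-$k$ passage from the global-in-$x$ integrals defining $Z'(s)$ and $p_{\sigma_l}$ to the torus-based $H^s(\pi N_k)$ norm, which rests on both the decay assumption $\alpha>1$ (ensuring $\sum_m\langle 2\pi m N_k\rangle^{-2\alpha}$ is bounded uniformly in $k$) and the scale-invariance in $k$ of the Burq--G\'erard--Tzvetkov Strichartz estimate. Once these ingredients are in place, the remainder of the argument is a routine combination of Proposition~\ref{prop-loccont} and Lemma~\ref{contl}.
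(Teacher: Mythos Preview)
Your proof is correct and follows essentially the same strategy as the paper's: split $\psi_k(t)u-\psi_k(t)v$ into the free evolution $L(t)(u-v)$ and the periodic Duhamel correction $g=\psi_k'(t)u-\psi_k'(t)v\in E_k$, handle the former by the time change of variable (Lemma~\ref{contl}) and the latter by partitioning $\R$ into translates of $[-\pi N_k,\pi N_k]$ together with Strichartz (for $Z'(s)$) or the trivial $H^{\sigma_l}$--isometry of $L(\tau)$ (for $d$), and finally invoke Proposition~\ref{prop-loccont}. Your write-up is in fact more explicit than the paper's on several points---the subadditivity inequality for the metric $d$, the reduction $\|L(t)(u-v)\|_{Z(s)}\le C\|L(\tau)(u-v)\|_{Z'(s)}$, and the uniform-in-$k$ control of the spatial sum $\sum_m\langle 2\pi m N_k\rangle^{-2\alpha}$---while the only cosmetic difference is that the paper passes through the Sobolev embedding $D^{\sigma_l}:L^2\to L^\infty$ for the $d$-estimate instead of your direct $H^{\sigma_l}(\pi N_k)$ isometry argument.
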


\begin{proof}We start with $Z'(s)$. We write $\psi_k(t) u - \psi_k(t) v  = \psi'_k(t)u - \psi'_k(t) v + L(t) (u-v)$. We use that $\psi'(t)u$ and $\psi'(t)v$ are in $E_k$ and we divide the space line into intervals of size $2\pi N_k$ and the time line into intervals of size $1$ and Strichartz estimates to get
$$
\|  L(\tau) (\psi'_k(t)u - \psi'_k(t)v)\|_{Z'(s)} \leq C (\pi N_k)^{-1} \|\psi'_k(t)u - \psi'_k(t)v\|_{H^s} \leq C \| L(\tau) (u-v)\|_{Z'(s)} \; .
$$
We use a change of variable on time and the fact that $\an{\tau}^2 \an{t+\tau}^{-2}$ is bounded to get
$$
\| L(\tau) (L(t)u - L(t)v)\|_{Z'(s)} \leq   C(t) \| L(\tau) (u-v)\|_{Z'(s)} \; .
$$

For $d$, we use the same arguments, replacing Strichartz estimates by Sobolev embeddings
\begin{multline*}
\|\an{x}^{-2} \an{\tau}^{-2} D^\sigma L(\tau) (\psi'_k(t)u - \psi'_k(t)v)\|_{L^2} \leq C (\pi N_k)^{-1} \|D^ \sigma L(t)(\psi'_k(t)u - \psi'_k(t)v)\|_{L^\infty} \\
\leq C \| L(\tau) (u-v)\|_{Z'(s)} \; .
\end{multline*}
and a change of variable to get
$$
\|\an{x}^{-2} \an{\tau}^{-2} D^\sigma L(\tau) (L(t)u - L(t)v)\|_{L^2} \leq C\an{t}^2\|\an{x}^{-2}\an{\tau}^{-2} D^\sigma L(\tau) (u-v)\|_{L^2}
$$
as in the proof of the continuity of $L(t)$ to get
$$
d(\psi_k(t)u, \psi_k(t) v )\leq C(t) \|L(\tau) (u-v)\|_{Z'(s)} + d(u,v) \; .
$$

\end{proof}

\section{Global analysis}

In this section, we extend results of the previous section to global times, assuming that we take the initial data in a given set $A$. In this way, we prove the global well-posedness of \eqref{oureq}, but also the global uniform convergence of the flow $\psi_k(t)$ of our approaching equations \eqref{approach} towards the flow $\psi(t)$ of \eqref{oureq}. We do this by propagating these properties from time to time. The global well-posedness of the approaching equations are due to energy estimates and the fact that the non linear part is in $E_k$, a finite dimensional space.

\subsection{Global well-posedness of approaching equations}

\begin{proposition}\label{prop-gwppsik}
The Cauchy problem
\begin{equation}\label{appeq}
\begin{cases}
i\partial_tv+\Delta v-\Pi_kP_k\left(\chi|\Pi_k(L(t)u_0+v)|^2(\Pi_k(L(t)u_0+v)\right)=0\\
v(x,0)=0.
\end{cases}
\end{equation}

is globally well posed in $L(t)u_0+E_k$.

\end{proposition}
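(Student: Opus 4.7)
The plan is to combine the local well-posedness statement of Proposition \ref{prop-lwp} with an a priori bound on $\|v(t)\|_{L^2(\pi N_k)}$ that rules out finite-time blow-up.

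First I would observe that, since $v(0)=0\in E_k$ and the nonlinearity in \eqref{appeq} always takes values in $E_k$ (thanks to the projection $\Pi_k P_k$), the solution $v(t)$ remains in the finite-dimensional space $E_k$ on its whole interval of existence. On $E_k$ any two Sobolev norms are equivalent, with an equivalence constant $C_k^s$ depending only on $k$ and $s$; in particular the $H^s(\pi N_k)$ norm appearing in Proposition \ref{prop-lwp} is bounded by $C_k^s\|v\|_{L^2(\pi N_k)}$, and it suffices to control the $L^2(\pi N_k)$ norm of $v$ on any bounded time interval.

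Next I would derive an energy identity. Pairing \eqref{appeq} with $\bar v$ in $L^2(\pi N_k)$, the $\Delta v$ term contributes zero after taking real parts, while the self-adjointness of $\Pi_k$ and the duality
\[
\int_{-\pi N_k}^{\pi N_k}\bar v\, P_k F\,dx=\int_{\R}\bar v\, F\,dx
\]
for $2\pi N_k$-periodic $v$ yield, with $w:=L(t)u_0+v$,
\[
\frac{d}{dt}\|v\|_{L^2(\pi N_k)}^2 = -2\operatorname{Im}\int_{\R}\chi(x)|\Pi_k w|^2\Pi_k w\,\bar v\,dx.
\]
Writing $\bar v=\overline{\Pi_k w}-\overline{\Pi_k L(t)u_0}$, the quartic term in $\Pi_k w$ is real and cancels, leaving
\[
\frac{d}{dt}\|v\|_{L^2(\pi N_k)}^2 = 2\operatorname{Im}\int_{\R}\chi(x)|\Pi_k w|^2\Pi_k w\,\overline{\Pi_k L(t)u_0}\,dx.
\]
Estimating this right-hand side by H\"older, using the integrability assumption on $\chi$ and the finite-dimensional equivalence $\|v\|_{L^\infty}\leq C_k\|v\|_{L^2(\pi N_k)}$, together with the local $L^\infty_x$ control of $\Pi_k L(t)u_0$ available whenever Proposition \ref{prop-lwp} applies, one obtains a differential inequality for $y(t):=\|v(t)\|_{L^2(\pi N_k)}^2$ that keeps $y$ finite on every bounded interval of time.

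Once the a priori bound is established, one iterates Proposition \ref{prop-lwp}: at each time $t_n$ the local length of existence $T_n\sim\Lambda_n^{-2\gamma}$ is bounded below in terms of $\|v(t_n)\|_{H^s(\pi N_k)}\leq C_k^s\|v(t_n)\|_{L^2(\pi N_k)}$ and of $\|L(t)u_0\|_{Z_{t_n}(s)}$, both of which are finite on bounded $t$-intervals. Finitely many local applications therefore cover any compact time interval and yield the global flow. The main difficulty is the energy step: the cancellation of the quartic term in the identity above is what lowers the effective power of $v$ in the estimate by one and, combined with the integrability of $\chi$ and the finite dimension of $E_k$, makes the a priori control of $\|v\|_{L^2(\pi N_k)}$ possible.
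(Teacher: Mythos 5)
Your overall architecture (reduce to an a priori bound via an energy identity whose quartic term cancels, then invoke the finite dimension of $E_k$ and iterate the local theory) is the right one and matches the paper's, but the quantity you choose to propagate does not close. After the cancellation of the quartic term, your identity reads
\begin{equation*}
\frac{d}{dt}\|v\|_{L^2(\pi N_k)}^2=\pm2\,\mathrm{Im}\int\chi\,|\Pi_k w|^2\,\Pi_k w\,\overline{\Pi_k L(t)u_0}\,dx ,
\end{equation*}
and the worst contribution on the right is $\int\chi|\Pi_k v|^3|\Pi_k L(t)u_0|\lesssim C_k(t)\,\|v\|_{L^2(\pi N_k)}^{3}$. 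Setting $y=\|v\|_{L^2}^2$ this gives only $\dot y\lesssim C(t)\bigl(1+y^{3/2}\bigr)$, which is \emph{superlinear}: Gr\"onwall does not apply, and such an inequality is compatible with finite-time blow-up (compare $\dot y=y^{3/2}$). The cancellation you point to lowers the power of $v$ from four to three, but three is still more than the two that $\|v\|_{L^2}^2$ can absorb, so the claim that this "keeps $y$ finite on every bounded interval" is unjustified; this is precisely the step where the argument breaks.

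The paper closes the estimate by propagating a coercive quantity instead, namely the Hamiltonian energy
\begin{equation*}
\mathcal{E}_k(v)=\frac12\int_{-\pi N_k}^{\pi N_k}\Bigl(|\nabla v|^2+\frac12P_k\bigl(\chi|\Pi_k(L(t)u_0+v)|^4\bigr)\Bigr),
\end{equation*}
which is nonnegative because the equation is defocusing ($\chi\geq0$) and which \emph{dominates} the quartic potential. The same computation as yours shows that the pairing of the nonlinearity with $\partial_t v$ vanishes identically, and the surviving forcing term (the one involving $\partial_tL(t)u_0$) is bounded by $\|\Pi_k\Delta L(t)u_0\|_{L^4(\pi N_k)}\,\mathcal{E}_k(v)^{3/4}$. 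The exponent $3/4<1$ is the whole point: it makes the differential inequality sublinear, so $\mathcal{E}_k(v)^{1/4}$ grows at most like $\int_0^t\|\Pi_k\Delta L(\tau)u_0\|_{L^4(\pi N_k)}d\tau$, which is finite on bounded intervals. From the bound on $\mathcal{E}_k(v)$ one controls $\|v\|_{H^1(\pi N_k)}$, and then your final two steps (equivalence of norms on $E_k$ and iteration of Proposition \ref{prop-lwp}) go through verbatim. If you want to keep an $L^2$-based argument you must find some replacement for this coercivity; as written, the $L^2$ norm of $v$ alone cannot control the cubic forcing.
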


\begin{proof}
We define the energy of the system to be
\begin{equation}
\mathcal{E}_k(v)=\frac12\int_{-\pi N_k}^{\pi N_k}\left(|\nabla v|^2+\frac12P_k\left(\chi |R_k(u_0,v)|^4\right)\right)
\end{equation}
where we are denoting with $R_k(u_0,v)=\Pi_k(L(t)u_0 + v)$.
Differentiating in time gives
$$
\partial_t \mathcal{E}_k(v)=-{\rm Re}\int_{-\pi N_k}^{\pi N_k}\partial_t\overline{v}\Delta v+{\rm Re}\int_{-\pi N_k}^{\pi N_k} \chi|R_k(u_0,v)|^2R_k(u_0,v)\partial_t\overline{R_k(u_0,v)}.
$$
Since $\partial_tR_k(u_0,v)=\Pi_k(\partial_t L(t)u_0+\partial_tv)$  and $\Pi_k$ is a self-adjoint operator we can rewrite it as

\begin{eqnarray*}
\partial_t \mathcal{E}_k(v) & = & {\rm Re}\left(\int_{-\pi N_k}^{\pi N_k}\partial_t\overline{v}\:(-\Delta v+P_k\left(\chi|R_k(u_0,v)|^2(R_k(u_0,v))\right)\right) \\
& + & {\rm Re}\left(\int_{-\pi N_k}^{\pi N_k}\partial_t\overline{L(t)u_0} \left(P_k\chi|R_k(u_0,v)|^2R_k(u_0,v)\right)\right) \\
& = & I+II.
\end{eqnarray*}

It is immediate to verify, since $v$ solves equation \eqref{appeq}, that
\begin{equation*}
I={\rm Re}\left((-i)\int_{-\pi N_k}^{\pi N_k}\overline{\partial_tv}\partial_t v\right)=0,
\end{equation*}
so that we are left with $II$. We estimate it as follows
\begin{eqnarray*}
II&\leq&\|\Pi_k\partial_tL(t)u_0\|_{L^4(\pi N_k)}\|P_k\chi|R_k(u_0,v)|^2R_k(u_0,v)\|_{L^{4/3}(\pi N_k)}
\\
&\leq&\|\Pi_k\Delta L(t)u_0\|_{L^4(\pi N_k)}\|P_k\chi\|_{L^\infty}^{1/3}\|P_k\chi^{1/4}R_k(u_0,v)\|^3_{L^4}
\\
&\lesssim& \|\Pi_k\Delta L(t)u_0\|_{L^4(\pi N_k)}\mathcal{E}_k(v)^{3/4}
\end{eqnarray*}
since $\chi\in L^\infty$. 
Let us now consider the quantity $\mathcal{H}_k(v)=\mathcal{E}_k(v)^{1/4}$: we have
\begin{equation*}
\partial_t\mathcal{H}_k(v)=\frac14\left(\partial_t\mathcal{E}_k(v)\right)\mathcal{E}_k(v)^{-3/4}
\lesssim \|\Pi_k\Delta L(t)u_0\|_{L^4(\pi N_k)}
\end{equation*}
which then gives by integration in time (the initial datum is $0$)
\begin{equation*}
\mathcal{H}_k(v)\lesssim\int_0^t\|\Pi_k\Delta L(\tau)u_0\|_{L^4(\pi N_k)}d\tau.
\end{equation*}
The boundedness of $\int_0^t\|\Pi_k\Delta L(\tau)u_0\|_{L^4(\pi N_k)}d\tau$ ($\Pi_k$ is a smooth cutoff thus the presence of the Laplacian is not a problem) then guarantees that $\mathcal{H}_k(v)=\mathcal{E}_k(v)^{1/4}$ is bounded for every time $t$. As a consequence, also $\mathcal{E}_k(v)^{1/2}$ is bounded for every time $t$. We now exploit the fact that $v$ belongs to $E_k$ which is of finite dimension, and which can be equipped with $H^s$ norm for every $s>0$; we can therefore estimate, for every $s\in[s_\infty,s_0]$,
\begin{equation*}
\|v\|_{H^s(\pi N_k)}\leq C_k\|v\|_{H^1(\pi N_k)}.
\end{equation*}
Since $\mathcal{E}_k(v)^{1/2}$ controls $\|v\|_{H^1(\pi N_k)}$, and the $Z_{t_0}(s)$ norm of $L(t)u_0$ is uniform bounded, the proof is concluded.

\end{proof}

\subsection{Global continuity of approaching flows}
 
In this subsection, we prove that for a certain set of initial data, the flow $\psi_k(t)$ is globally continuous with respect to the initial datum.

\begin{proposition}\label{prop-globcont} Let $t \in \R_+$ and $R \geq 0$. Let $B_t(R)$ be the set of $u$ such that
$$
\|L(\tau) u \|_{L^4([0,t], H^s(\pi N_k))} \leq R \; , \;  \|\an{x}^{-\alpha} L(\tau) u \|_{L^4([0,t],  L^\infty)} \leq R \; , \;  \|\an{x}^{-\alpha}D^s L(\tau) u \|_{L^4([0,t],  L^2)} \leq R 
$$
with $s \in ]1/3,1/2[$
For all $t' \in [0,t]$ the map $\psi'(t')$ is continuous in $B_t(R)$ for the $X^{-1/2-}$ topology.
\end{proposition}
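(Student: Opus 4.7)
The strategy is to iterate the Lipschitz-type estimates of Subsection~3.3 across a finite subdivision of $[0,t']$. Fix $u\in B_t(R)$ and a sequence $(u_n)\subset B_t(R)$ with $u_n\to u$ in $X^{-1/2-}$; the goal is to show $\psi'_k(t')u_n\to\psi'_k(t')u$ in $X^{-1/2-}$. Pick a partition $0=t_0<t_1<\cdots<t_N=t'$ with $|t_{j+1}-t_j|\leq T_0(R,k)$ small enough for Proposition~\ref{prop-loccont} and the subsequent local Lipschitz estimate to apply on each subinterval. The step size is dictated by a uniform bound on $\|L(\tau)\psi_k(t_j)u\|_{Z_{t_j}(s)}$, obtained through the decomposition $\psi_k(t_j)u=L(t_j)u+\psi'_k(t_j)u$: the linear part is controlled on $[t_j-1,t_j+1]$ by the $B_t(R)$-hypothesis combined with Strichartz on $\pi N_k\T$, while the remainder $\psi'_k(t_j)u\in E_k$ is globally controlled in $H^s(\pi N_k)$ thanks to Proposition~\ref{prop-gwppsik} and the finite-dimensionality of $E_k$. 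The same bounds hold, uniformly in $n$, for $u_n$ in place of $u$.

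\textbf{Iteration.} Using the flow identity $\psi_k(t_{j+1}-t_j)\circ\psi_k(t_j)=\psi_k(t_{j+1})$, the two estimates of the last proposition of Subsection~3.3 applied on each $[t_j,t_{j+1}]$ telescope over $N$ steps to yield
\begin{equation*}
\|L(\tau)(\psi_k(t')u-\psi_k(t')u_n)\|_{Z'(s)}\leq C^N\|L(\tau)(u-u_n)\|_{Z'(s)},
\end{equation*}
\begin{equation*}
d(\psi_k(t')u,\psi_k(t')u_n)\leq C^N d(u,u_n)+NC^{N+1}\|L(\tau)(u-u_n)\|_{Z'(s)}.
\end{equation*}

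\textbf{Passing to the $X^{-1/2-}$ topology.} The term $d(u,u_n)$ vanishes by hypothesis, so the remaining task is to prove $\|L(\tau)(u-u_n)\|_{Z'(s')}\to 0$ for some admissible $s'\in\,]1/3,s[$, using only that $u_n\to u$ in $X^{-1/2-}$ and that $u_n-u$ is uniformly bounded in $B_t(R)$. This is the main obstacle. One expects to proceed by interpolation: $u_n-u$ is uniformly bounded at positive regularity $s>1/3$ with $L^4_t$ in time (from $B_t(R)$) and converges to $0$ in the negative-regularity, $L^2_t$-weighted semi-norms defining $X^{-1/2-}$, so an intermediate norm should control the convergence. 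The delicate point is that $Z'(s')$ involves $L^p_t$ with $p>4$, which does not lie in the interpolation range of $L^2_t$ and $L^4_t$; one circumvents this either by reformulating the Lipschitz estimates of Subsection~3.3 with $L^4_t$-based norms directly, or by exploiting the weights $\an t^{-2}$ and $\an x^{-\alpha}$ together with the smoothing of $L(\tau)$ to bridge the discrepancy. Once this convergence in $Z'(s')$ is secured, the telescoped bound concludes the proof.
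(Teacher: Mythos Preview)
Your scheme has two genuine gaps, both of which the paper's argument avoids by taking a different route.

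First, the local Lipschitz results of Subsection~3.3 require the hypothesis $\|L(\tau)u\|_{Z_{t_j}(s)}\leq\Lambda$, and $Z_{t_j}(s)$ is an $L^p_\tau$-based norm with $p>4$. Membership in $B_t(R)$ only supplies $L^4_\tau$-bounds, so you cannot invoke those propositions as stated. You hint at ``reformulating the Lipschitz estimates with $L^4_t$-based norms'', but this is not carried out.

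Second, and more seriously, you correctly diagnose the main obstacle --- passing from $d(u,u_n)\to 0$ to $\|L(\tau)(u-u_n)\|_{Z'(s')}\to 0$ --- but do not resolve it. Your interpolation idea cannot close: the convergent norm ($X^{-1/2-}$) lives in $L^2_t$ at negative regularity, the bounded norm ($B_t(R)$) in $L^4_t$ at positive regularity, and the target $Z'(s')$ requires $L^p_t$ with $p>4$ together with an $L^\infty_x$ component. This lies strictly outside the interpolation range, and the time weight $\an t^{-2}$, being present on both sides, does not help.

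The paper's proof is structurally different. It does \emph{not} use the $Z'(s)$-Lipschitz machinery of Subsection~3.3 and does not aim for a Lipschitz bound at all. The time step is taken from the energy bound $g(t,R)$ of Proposition~\ref{prop-gwppsik} (so constants may depend on $k$), and the induction runs in $L^\infty_t L^2(\pi N_k)$, exploiting that $\psi'_k(t')u,\psi'_k(t')v\in E_k$ where all norms are equivalent. The residual forcing term $\|L(\tau)(u_0-v_0)\|_{L^2([t_n,t_{n+1}],L^6(\pi N_k))}$ is then handled by a frequency-splitting $\varepsilon$-argument: with a cutoff $\Pi_r$ at threshold $r$, the high-frequency part is $\lesssim r^{1/3-s}R$ (using the $H^s(\pi N_k)$ bound from $B_t(R)$ and $s>1/3$), while the low-frequency part $\Pi_r P_k L(\tau)(u_0-v_0)$ is, by finite-dimensionality, controlled by $C_k(t,r)\,d(u_0,v_0)$. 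Choosing first $r$ large, then $n$ large, yields continuity. This frequency splitting is the missing idea in your proposal.
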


\begin{proof} Let $T$ be such that $T = \frac1{C_k f(t,R)}$ with $C_k$ a big enough constant and $f(t,R)$ defined as follows. Thanks to the proof of Proposition \ref{prop-gwppsik}, we know that the $L^2$ norm of $\psi'(t') u$ is bounded by some function of $R$ and $t$, $g(t,R)$ for all $u \in B_t(R)$ and $t' \in [0,t]$, we set $f(t,R) = \max (R, g(t,R))^{4}$. Let $t_n = nT$, we prove by induction on $n$ that $\psi'(t')$ is continuous for $t' \in [t_{n-1},t_n]$ in $L^\infty([t_{n-1}, t_n], L^2(\pi N_k))$. 

For $n=0$, we have $t_0 = 0$ and $\psi'(0) = Id$. 

For the induction $n\rightarrow n+1$, we consider that $u = \psi'(t') u_0$ and $v = \psi'(t') v_0$ with $u_0 , v_0 \in B_t(R)$ are the fixed points
$$
u(t') = L(t'-t_n) u(t_n) + i \int_{t_n}^{t'} \Pi_k P_k \chi \Big(|u(\tau) +L(\tau) u_0|^2(u(\tau) + L(\tau) u_0) \Big) d\tau 
$$
$$
v(t') = L(t'-t_n) v(t_n) + i \int_{t_n}^{t'} \Pi_k P_k \chi \Big(|v(\tau) +L(\tau) v_0|^2(v(\tau) + L(\tau) v_0) \Big) d\tau \; .
$$
Hence we get that, since $H^s$ is embedded in $L^6$, and since $u,v$ belong to $E_k$ which is of finite dimension and hence every norm is equivalent on $E_k$:
\begin{multline*}
\|u(t') - v(t') \|_{L^2}  \leq \|u(t_n) - v(t_n)\|_{L^2} + C_k \|L(\tau) (u_0 - v_0)\|_{L^2([t_n, t_{n+1}], L^6)} g(t,R)^2 + \\
C_k T^{1/2} g(t,R)^2 \|u -v\|_{L^\infty([t_n, t_{n+1}], L^2)} \; .
\end{multline*}
With our definition of $T$, $ C_k T^{1/2} g(t,R)^2  < 1$. Therefore,
$$
\|u(t') - v(t') \|_{L^2}  \leq C_k \|u(t_n) - v(t_n)\|_{L^2} + C_k \|L(\tau) (u_0 - v_0)\|_{L^2([t_n, t_{n+1}], L^6(\pi N_k))} g(t,R)^2 \; .
$$
We focus on
$$
\|L(\tau) (u_0 - v_0)\|_{ L^6(\pi N_k))} = \|P_k L(\tau) (u_0 - v_0)\|_{ L^6(\pi N_k))} \; .
$$
By Sobolev embedding, we have 
$$
\|L(\tau) (u_0 - v_0)\|_{ L^6(\pi N_k))} \leq \|P_k L(\tau) (u_0 - v_0)\|_{H^{1/3}(\pi N_k))} \; .
$$
We introduce the smooth frequency cut-off $\Pi_r$, we have 
$$
\|L(\tau) (u_0 - v_0)\|_{ L^6(\pi N_k))} \leq \|(1-\Pi_r) P_k L(\tau) (u_0 - v_0)\|_{H^{1/3}(\pi N_k))} + \|\Pi_r P_k L(\tau) (u_0 - v_0)\|_{H^{1/3}(\pi N_k))} \; .
$$
We have
$$
\|(1-\Pi_r) P_k L(\tau) (u_0 - v_0)\|_{H^{1/3}(\pi N_k))} \leq C r^{1/3 -s} R
$$
and since $E_k$ is of finite dimension
$$
\|\Pi_k P_k L(\tau) (u_0 - v_0)\|_{L^2([t_{n}, t_{n+1}],H^{1/3}(\pi N_k))} \leq C_k (t) \|\an{\tau}^{-2} D^\sigma \Pi_k P_k L(\tau) (u_0 - v_0) \|_{L^2_t,x} \leq C_k(t,r) d(u_0,v_0) \; .
$$
We use that $\psi(t_n)$ is continuous by induction hypothesis and that $r^{s-1/3}$ goes to $0$ when $r \rightarrow \infty$ to conclude.\end{proof}

\subsection{Global uniform convergence}

\begin{definition} For all $n\in \mathbb{N}$, let
$$
\Lambda_n = (1+n)^{\gamma'/4} \Lambda, T_n = \frac1{C\Lambda_n^{2\gamma}} = \frac1{C\Lambda^{2\gamma}\sqrt{1+ n}}, t_n = \sum_{k=1}^n T_k \sim \sqrt n, s_n = s_\infty + \frac1n (s_0 - s_\infty)\; .
$$
Let
$$
A_{k,n} (\Lambda) = \{ u_0 |\; \| L(t) \psi_k(t_n)u_0\|_{Z(s_n)} \leq \Lambda_{n+1}\}\; ,
$$
$$
A_k(\Lambda) = \bigcap_n A_{k,n}(\Lambda) \mbox{ and } A(\Lambda) = \limsup_k A_k(\Lambda) \; .
$$
\end{definition}

\begin{proposition}\label{prop-guc}
For all $n\in \mathbb{N}$, we have \begin{enumerate}
\item for all $t \in [0,t_n]$, all $u_0 \in A(\Lambda)$, $\psi(t)u_0$ is well-defined,
\item for all $t \in [0,t_n]$, $\varepsilon > 0$, there exists $k_0 \in \mathbb{N}$ such that for all $k\geq k_0$ and all $u_0 \in A(\Lambda)$, 
$$
d(\psi(t) u_0 ,\psi_k(t) u_0) \leq \varepsilon \mbox{ and } \| L(\tau)(\psi(t) u_0 - \psi_k(t) u_0)\|_{Z'(s_n)}\leq \varepsilon \; ,
$$
\item $\|L(t) \psi(t_n) u_0\|_{Z(s_n)} \leq \Lambda_{n+1}$,
\item there exists $k_1 \in \mathbb{N}$ such that for all $k\geq k_1$ and all $u_0 \in A(\Lambda)$, 
$$\|L(t) \psi_k(t_n)u_0\|_{Z(s_n)} \leq 2\Lambda_{n+1}.$$
\end{enumerate}
\end{proposition}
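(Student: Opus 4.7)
The plan is to prove the four statements simultaneously by induction on $n$, using the local-in-time results of Section 3 to propagate bounds and convergences from $t_n$ to $t_{n+1}$. The base case $n=0$ is immediate: $\psi(0) = \psi_k(0) = \mathrm{Id}$ handles (1) and (2); and since $u_0 \in A(\Lambda) = \limsup_k A_k(\Lambda)$ forces $u_0 \in A_{k_j,0}(\Lambda)$ along some subsequence $(k_j)$, we get $\|L(t)u_0\|_{Z(s_0)} \leq \Lambda_1$, giving (3) and (4) at $n=0$.

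For the inductive step, assume (1)--(4) at level $n$. Statement (1) at $n+1$ follows by applying Proposition \ref{prop-lwp} at initial time $t_n$ with datum $\psi(t_n)u_0$ of size $\Lambda_{n+1}$ (furnished by (3) at level $n$), which extends the flow through $t_{n+1} = t_n + T_{n+1}$. For (2) and (2') at $n+1$, the bounds on $[0,t_n]$ are inherited from the inductive hypothesis (noting that $Z'(s_n) \hookrightarrow Z'(s_{n+1})$ since $s_{n+1} < s_n$). On $[t_n, t_{n+1}]$, I would exploit the semigroup property through the decomposition
\begin{equation*}
\psi(t)u_0 - \psi_k(t)u_0 = \bigl[\psi(t-t_n)\psi(t_n)u_0 - \psi_k(t-t_n)\psi(t_n)u_0\bigr] + \bigl[\psi_k(t-t_n)\psi(t_n)u_0 - \psi_k(t-t_n)\psi_k(t_n)u_0\bigr].
\end{equation*}
The first bracket tends to $0$ as $k \to \infty$ by Propositions \ref{prop-xmoinscv} and \ref{prop-zcv} applied with the fixed datum $\psi(t_n)u_0$ of size $\Lambda_{n+1}$. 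The second bracket tends to $0$ by the continuity estimates for $\psi_k$ at the end of Section 3: these apply because (3) and (4) at level $n$ (for $k \geq k_1^{(n)}$) provide the common $Z(s_n)$-bound on both $\psi(t_n)u_0$ and $\psi_k(t_n)u_0$, and the right-hand side of the estimate is controlled by $d(\psi(t_n)u_0, \psi_k(t_n)u_0) + \|L(\tau)(\psi(t_n)u_0 - \psi_k(t_n)u_0)\|_{Z'(s_n)}$, which vanishes by (2) and (2') at level $n$.

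To close the induction, I obtain (3) at $n+1$ by using membership in $A(\Lambda)$: there exists a subsequence $(k_j)$, depending on $u_0$, with $\|L(t)\psi_{k_j}(t_{n+1})u_0\|_{Z(s_{n+1})} \leq \Lambda_{n+2}$. Statement (2') at $n+1$, just proved, gives convergence $L(\tau)\psi_{k_j}(t_{n+1})u_0 \to L(\tau)\psi(t_{n+1})u_0$ in $Z'(s_{n+1})$, which restricts to $Z(s_{n+1})$-convergence on $[-1,1]$ since $\an{\tau}^{-2}$ is bounded below there; lower-semicontinuity then yields $\|L(t)\psi(t_{n+1})u_0\|_{Z(s_{n+1})} \leq \Lambda_{n+2}$. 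Statement (4) at $n+1$ follows: the full sequence $\psi_k(t_{n+1})u_0 \to \psi(t_{n+1})u_0$ converges in $Z(s_{n+1})$ uniformly in $u_0 \in A(\Lambda)$, so beyond some $k_1^{(n+1)}$ the error is at most $\Lambda_{n+2}$, producing the $2\Lambda_{n+2}$ bound.

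The main obstacle is verifying that the thresholds $k_0, k_1^{(n)}$ generated at each inductive step are uniform in $u_0 \in A(\Lambda)$, since the subsequence witnessing membership depends on $u_0$. All the rates in Propositions \ref{prop-luc}, \ref{prop-xmoinscv}, \ref{prop-zcv} and the continuity estimates for $\psi_k$ depend only on the common size $\Lambda_{n+1}$, so uniformity is preserved as long as (2) and (2') are uniform at the previous step, which they are by induction. A secondary technical point is that the continuity estimate for $\psi_k$ must be applied with the larger bound $2\Lambda_{n+1}$ coming from (4); this requires enlarging the constant in $T_{n+1} = 1/(C\Lambda_{n+1}^{2\gamma})$ by a factor $2^{2\gamma}$, which is harmless since $t_n = \sum T_k$ still grows like $\sqrt{n}$.
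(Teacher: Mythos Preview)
Your proposal is correct and follows essentially the same approach as the paper: induction on $n$, with (1) coming from local well-posedness, (2) via the same splitting into a local-uniform-convergence piece (Propositions \ref{prop-xmoinscv}, \ref{prop-zcv}) and a local-continuity piece, (3) by passing to the limit along the subsequence furnished by $A(\Lambda)$, and (4) from the uniform $Z'$-convergence just established. You are in fact more explicit than the paper about the uniformity of the thresholds $k_0,k_1$ in $u_0$ and about the harmless constant adjustment in $T_{n+1}$ needed to accommodate the $2\Lambda_{n+1}$ bound from (4).
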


\begin{proof}
We assume $t\geq0$ and proceed by induction to prove the property $P(n)$. The inductive base, that is $(P(n=0)$), is easily done: properties $1$, $2$ and $3$ are trivial indeed, as is property $4$ (which is just $\|L(t)u_0\|_{Z(s_0)}\leq 2\Lambda_1$) once noticed that $u_0\in A_{k,0}(\Lambda)$ for some $k$ big enough. 

Let us now prove the inductive step, that is $P(n)\Rightarrow P(n+1)$; we take $\tau=t_n+t\in[t_n,t_{n+1}]$.
We start with property $1$. We use $3$ at the $n$-th step to write
\begin{equation}\label{uuu}
\|L(t)\psi(t_n)u_0\|_{Z(s_n)}\leq \Lambda_{n+1}
\end{equation}

We now rely on our local well-posedness theory: assumption \eqref{uuu} implies indeed that $\psi(t_n+\tau)u_0$ is well defined for $|\tau|\leq T_{n+1}$ and so that $\psi(t)u_0$ is well defined in $[0,t_n+T_{n+1}]=[0,t_{n+1}]$ (see Proposition \ref{prop-lwp}).

To prove property $2$ we make use of both our local continuity and local uniform convergence results. We start writing
$$
\psi(t)u_0-\psi_k(t)u_0=[\psi(\tau)\psi(t_n)u_0-\psi_k(\tau)\psi(t_n)u_0]+[\psi_k(\tau)\psi(t_n)u_0-\psi_k(\tau)\psi_k(t_n)u_0]
$$
$$
=I_1+I_2
$$
to deal separately with $I_1$ and $I_2$.
In order to bound $I_1$, we use the local uniform convergence, namely Propositions \ref{prop-xmoinscv} and \ref{prop-zcv} to have that, for every $\tau\in[t_n-T_{n+1},t_n+T_{n+1}]$ and some $\beta>0$,
\begin{equation*}
d(\psi(\tau)\psi(t_n)u_0,\psi_k(\tau)\psi(t_n)u_0)\leq M_k^{-\beta}\Lambda_{n+1}^{1-2\gamma}
\end{equation*}
and for all $s'_{n+1}<s_{n+1}$
\begin{equation*}
\|L(t)(\psi(\tau)\psi(t_n)u_0-\psi_k(\tau)\psi(t_n)u_0)\|_{Z'(s_n')}
\leq M_k^{-\beta}\Lambda_{n+1}^{1-2\gamma}.
\end{equation*}
Therefore, provided $k\geq k_0$ is big enough, we can make both the right hand sides above smaller than any $\varepsilon>0$.

Turning to $I_2$, we rely on local continuity of $\psi$: properties $3$ and $4$ at the $n$-th step guarantee again the sufficient bounds on the initial datum, which here are $\psi(t_n)$ and $\psi_k(t_n)$, and so the application of Proposition \ref{prop-loccont} yields, for all $\tau\in[t_n-T_{n+1},t_n+T_{n+1}]$,
\begin{equation*}
d(\psi_k(\tau)\psi(t_n)u_0,\psi_k(\tau)\psi_k(t_n)u_0)
\end{equation*}
\begin{equation*}
\leq C\|L(t)(\psi(t_n)u_0-\psi_k(t_n)u_0)\|_{Z'(s_n)}+d(\psi(t_n)u_0,\psi_k(t_n)u_0),
\end{equation*}
and
\begin{equation*}
\|L(t)(\psi_k(\tau)\psi(t_n)u_0-\psi_k(\tau)\psi_k(t_n)u_0)\|_{Z'(s_n)}\leq
C \|L(t)(\psi(t_n)u_0-\psi_k(t_n)u_0)\|_{Z'(s_n)}.
\end{equation*}
Since $P(n)$ is supposed to be true, in particular $2$ holds for $t=t_n$; we can therefore estimate both the right hand sides above (and so $I_2$) with $\epsilon$.

Property $3$ immediately follows from the fact that for an increasing sequence $k_m\rightarrow+\infty$ we have from the assumption $u_0\in A_{k_m,n+1}(\Lambda)$
\begin{equation*}
\|L(t)\psi_{k_m}(t_{n+1})u_0\|_{Z(s_{n+1})}\leq \Lambda_{n+2};
\end{equation*}
weak convergence of $\psi_k(t)u_0$ towards $\psi(t)u_0$ then yields $3$.

Finally, to prove property $4$ it is enough to notice that convergence in $Z(s)$ norm is implied by convergence in $Z'(s)$ norm, which is guaranteed by Proposition 3.5; the estimate on the limit provided by property $3$ then guarantees the desired bound on $\psi_k(t_n)$ for $k\geq k_1$ for some $k_1$ big enough.
\end{proof}

\section{Conclusion : Invariance of \texorpdfstring{$\rho$}{rho} under \texorpdfstring{$\psi(t)$}{psi(t)}}

In this section, we prove the final part of Theorem \ref{theo-result}. We begin by proving that the set of initial data is of full $\rho$-measure ($\rho(A) = 1$), and then, we prove the invariance of $\rho$ under $\psi(t)$.

\subsection{Measure of \texorpdfstring{$A$}{A}}

\begin{lemma} For all $k$ and all $\Lambda \geq 1$, we have 
$$
\rho_k(A_k(\Lambda)^c) \leq \frac{C}{\Lambda^{8\gamma}} \; .
$$
\end{lemma}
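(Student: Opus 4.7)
The plan is to decompose $A_k(\Lambda)^c$ as a countable union over $n$, use the finite-dimensional invariance to move $\psi_k(t_n)$ off the estimate, and finish with a Gaussian tail bound that is summable in $n$ and of order $\Lambda^{-8\gamma}$.

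First I would write $A_k(\Lambda)^c = \bigcup_{n\geq 0} A_{k,n}(\Lambda)^c$ and apply subadditivity. Since
$$A_{k,n}(\Lambda)^c = \psi_k(t_n)^{-1}\bigl\{v \,:\, \|L(\tau)v\|_{Z(s_n)} > \Lambda_{n+1}\bigr\},$$
invariance of $\rho_k$ under $\psi_k(t_n)$ (Proposition~\ref{prop-fininvar}) gives $\rho_k(A_{k,n}(\Lambda)^c) = \rho_k\bigl(\{\|L(\tau) v\|_{Z(s_n)} > \Lambda_{n+1}\}\bigr)$. Using $0 \leq f_k \leq 1$ and $\inf_k J_k > 0$ (which follows from $J_k \to J > 0$, as exploited in the proof of Proposition~\ref{prop-cvm}), one has $\rho_k \leq C\mu_k$ with $C$ independent of $k$, reducing matters to a tail estimate for $\mu_k$, i.e., for the law of $\phi_k$.

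Next I would apply Markov's inequality to $X_{k,n} := \|L(\tau)\phi_k\|_{Z(s_n)}$ with exponent $q = 8\gamma$:
$$\mu_k\bigl(\{v \,:\, \|L(\tau)v\|_{Z(s_n)} > \Lambda_{n+1}\}\bigr) \leq \Lambda_{n+1}^{-q}\,\mathbb E[X_{k,n}^q].$$
Since $\gamma\gamma' = 1$, this choice gives $\Lambda_{n+1}^{-q} = \Lambda^{-8\gamma}(1+n)^{-q\gamma'/4} = \Lambda^{-8\gamma}(1+n)^{-2}$, so the sum over $n$ converges and yields the desired $\Lambda^{-8\gamma}$ bound, provided the moments $\mathbb E[X_{k,n}^q]$ are bounded uniformly in $k$ and $n$.

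The main obstacle is precisely this uniform moment estimate. For the $\|\langle x\rangle^{-\alpha} D^{s_n}\cdot\|_{L^p_\tau L^2_x}$ piece, distributional invariance of $\phi_k$ under $L(\tau)$ reduces it to $\mathbb E|D^{s_n}\phi_k(x)|^2$, which is the Riemann sum $N_k^{-1}\sum_l (1 + l^2/N_k^2)^{s_n-1}$ and is uniformly bounded in $k$ and in $s_n \in [s_\infty, s_0]$ since $s_0 < 1/2$. For the $\|\langle x\rangle^{-\alpha}\cdot\|_{L^p_\tau L^\infty_x}$ piece I would rerun the argument of Proposition~\ref{prop-infty} with $\phi_k$ in place of $\varphi$; the estimates there rest on $L^2_\omega$ computations of cutoff-modulated exponentials, whose analogues for $\phi_k$ enjoy the same uniformity (cf.\ the remark following Proposition~\ref{prop-belong}). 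The passage to higher moments is Gaussian hypercontractivity (Fernique), whose constants depend continuously on the $L^2_\omega$ norm, hence uniform in $k$ and $s_n$.
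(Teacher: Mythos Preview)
Your proof is correct and follows the same skeleton as the paper: decompose $A_k(\Lambda)^c$ as $\bigcup_n A_{k,n}(\Lambda)^c$, use the invariance of $\rho_k$ under $\psi_k(t_n)$ to reduce to a tail estimate for $\|L(\tau)v\|_{Z(s_n)}$ under $\mu_k$, and sum in $n$. The only difference is in how the tail is controlled. You apply Markov's inequality with exponent $q=8\gamma$ and bound the $q$-th moments via Gaussian hypercontractivity, getting $\Lambda_{n+1}^{-8\gamma}=(2+n)^{-2}\Lambda^{-8\gamma}$, which is summable. The paper instead invokes the exponential Gaussian tail (Fernique) directly, obtaining $\rho_k(A_{k,n}(\Lambda)^c)\leq Ce^{-c\Lambda_{n+1}^2}$, and then sums; this is a stronger intermediate bound and makes the uniformity in $k,n$ a one-line citation rather than the separate $L^2$/$L^\infty$ moment analysis you sketch. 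Your route has the virtue of making explicit why the exponent $8\gamma$ arises (it is exactly the power needed to make $\Lambda_{n+1}^{-q}$ summable in $n$), whereas in the paper the exponential tail renders any polynomial bound immediate and the choice of $8\gamma$ is not intrinsic to the argument.
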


\begin{proof}We have 
$$
A_k(\Lambda)^c = \bigcup_n(A_{k,n}(\Lambda)^c)
$$
thus
$$
\rho_k (A_k(\Lambda)^c) \leq \sum_n \rho_k(A_{k,n}(\Lambda)^c) \; .
$$
The set $A_{k,n}((\Lambda)^c) $ is given by
$$
\psi_k(t_n)^{-1}(\{ u \; |\; \| L(\tau) u \|_{Z(s_n)} > \Lambda_{n+1}\})
$$
and as $\rho_k$ is invariant under $\psi_k(t_n)$,
$$
\rho_k ( A_{k,n}^c) \leq C e^{-c \Lambda_{n+1}^2} =  Ce^{-c (n+1)^{\gamma'/4}\Lambda^2}
$$
and summing gives the desired bound.
\end{proof}

\begin{lemma}We have for all $\Lambda$
$$
\rho(A(\Lambda)^c) \leq \frac{C}{\Lambda^{8\gamma}} \; .
$$
\end{lemma}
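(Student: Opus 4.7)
The plan is to transfer the bound from the previous lemma (which controls $\rho_k$) to the limit measure $\rho$, by combining Fatou's lemma with the weak convergence $\rho_k\to\rho$ of Proposition \ref{prop-cvm}.

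\textbf{Step 1 (Fatou).} Since $A(\Lambda)=\limsup_k A_k(\Lambda)$, its complement satisfies $A(\Lambda)^c=\liminf_k A_k(\Lambda)^c$. Fatou's lemma for measures immediately yields
\begin{equation*}
\rho(A(\Lambda)^c)\;\leq\;\liminf_k \rho(A_k(\Lambda)^c),
\end{equation*}
so it suffices to bound $\rho(A_k(\Lambda)^c)$ by $C/\Lambda^{8\gamma}$ in the $\liminf$ as $k\to\infty$.

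\textbf{Step 2 (Openness and Portmanteau).} Next I would show that each $A_k(\Lambda)^c=\bigcup_n \psi_k(t_n)^{-1}\{u:\|L(t)u\|_{Z(s_n)}>\Lambda_{n+1}\}$ is open in $X^{-1/2-}$. The map $u\mapsto \|L(t)u\|_{Z(s_n)}$ is lower semicontinuous on $X^{-1/2-}$ (Fatou applied to the defining $L^p_t L^\infty_x$-norm), and $\psi_k(t_n)$ is continuous on $X^{-1/2-}$ by the analogue of Proposition \ref{prop-globcont}. Thus $A_k(\Lambda)^c$ is a countable union of preimages of open sets under continuous maps, hence open. Portmanteau applied to the weak convergence of Proposition \ref{prop-cvm} then gives
\begin{equation*}
\rho(A_k(\Lambda)^c)\;\leq\;\liminf_{j\to\infty}\rho_j(A_k(\Lambda)^c).
\end{equation*}

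\textbf{Step 3 (Comparison of $\psi_j$ and $\psi_k$, main obstacle).} The crucial point is to bound $\rho_j(A_k(\Lambda)^c)$ for $j,k$ both large, since the previous lemma only controls the diagonal $\rho_k(A_k(\Lambda)^c)$. Fernique's theorem furnishes a set $G_{j,M}$ with $\rho_j(G_{j,M}^c)\leq Ce^{-cM^2}$ on which the initial data have uniformly bounded $Z(s_n)$-norms. On $G_{j,M}$, the global uniform convergence of Proposition \ref{prop-guc} (combined with the local versions, Propositions \ref{prop-xmoinscv} and \ref{prop-zcv}) gives $\|L(t)(\psi_k(t_n)u-\psi_j(t_n)u)\|_{Z(s_n)}\to 0$ as $j,k\to\infty$, uniformly in $n\leq N(j,k)$. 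Exploiting the strict gap between the threshold $\Lambda_{n+1}$ in the definition of $A_k$ and a slightly smaller level, for $j,k$ large enough one obtains
\begin{equation*}
A_k(\Lambda)^c\cap G_{j,M}\;\subseteq\;A_j(\Lambda/2)^c,
\end{equation*}
and therefore, by the previous lemma applied to $\rho_j$,
\begin{equation*}
\rho_j(A_k(\Lambda)^c)\;\leq\;\rho_j(A_j(\Lambda/2)^c)+\rho_j(G_{j,M}^c)\;\leq\;\frac{C}{\Lambda^{8\gamma}}+Ce^{-cM^2}.
\end{equation*}
Letting $M\to\infty$ and then $j\to\infty$, and chaining with the two previous steps, yields $\rho(A(\Lambda)^c)\leq C/\Lambda^{8\gamma}$.

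The main difficulty lies in Step 3: neither $\rho$ nor $\rho_j$ is invariant under $\psi_k$ for $k\ne j$, so the bound from the previous lemma cannot be applied directly. The trick is to absorb the discrepancy $\psi_k(t_n)u-\psi_j(t_n)u$ using the margin built into $\Lambda_{n+1}>\Lambda$ on a set of $\rho_j$-measure arbitrarily close to one, which in turn relies on the uniform convergence of the approximating flows away from a Fernique-small set of initial data.
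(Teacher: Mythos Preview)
Your outline shares the opening Fatou step with the paper, but Steps 2 and 3 both contain genuine gaps.

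\textbf{Step 2.} You assert that $u\mapsto\|L(t)u\|_{Z(s_n)}$ is lower semicontinuous on $X^{-1/2-}$ and that $\psi_k(t_n)$ is continuous on $X^{-1/2-}$. Neither holds as stated. The $Z(s_n)$-norm involves $D^{s_n}$ with $s_n>0$ and an $L^\infty_x$ component, so it is not even finite on a generic element of $X^{-1/2-}$; and Proposition \ref{prop-globcont} gives continuity of $\psi'_k$ only on the restricted sets $B_t(R)$, not globally. The paper circumvents this by splitting $\psi_k(t_n)u=L(t_n)u+\psi'_k(t_n)u$: the linear part is handled directly by Gaussian tail bounds (no topology needed), while $\psi'_k(t_n)u$ lands in the finite-dimensional space $E_k$, where all norms are equivalent, so continuity on $B_{t_n}(R)$ suffices. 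Weak convergence is then applied to the truncated measures $\rho_{t_n,R,j}\to\rho_{t_n,R}$, followed by a supremum in $R$.

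\textbf{Step 3.} This is the more serious issue. You invoke Proposition \ref{prop-guc} on a Fernique set $G_{j,M}$ to compare $\psi_k$ and $\psi_j$. But Proposition \ref{prop-guc} is stated for $u_0\in A(\Lambda)$, and its proof genuinely uses the bounds $\|L(t)\psi_k(t_n)u_0\|_{Z(s_n)}\le\Lambda_{n+1}$ at every intermediate time $t_n$ to iterate the local convergence. A Fernique bound on the initial $Z$-norm alone does not provide these intermediate bounds; they are exactly the content of $A_k(\Lambda)$, whose measure you are trying to estimate. So the argument is circular. The paper avoids any comparison of $\psi_k$ with $\psi_j$: after Portmanteau produces $\liminf_j\rho_j(\text{set depending on }\psi_k)$, it diagonalises to $j=k$, and then the previous lemma applies directly because $\rho_k$ is invariant under $\psi_k$.
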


\begin{proof}
First, we use that $A(\Lambda)^c = \liminf_k A_{k} (\Lambda)^c$ to get, thanks to Fatou's lemma,
$$
\rho(A(\Lambda)^c) \leq \liminf_k \rho(A_k(\Lambda)^c) \; .
$$
We have that 
$$
A_{k,n(\Lambda)^c)} \subseteq \{ u \; |\; \| L(\tau) \psi'_k(t_n) u \|_{Z(s_n)} > \Lambda_{n+1}\}\cup \{ u \; |\; \| L(\tau + t_n) u \|_{Z(s_n)} > \Lambda_{n+1}\}
$$
thus we get
$$
A_{k,n}(\Lambda)^c) \subseteq  \bigcup_n \{ u \; |\; \| L(\tau) \psi'_k(t_n) u \|_{Z(s_n)} > \Lambda_{n+1}/2\}\cup \bigcup_n\{ u \; |\; \| L(\tau + t_n) u \|_{Z(s_n)} > \Lambda_{n+1}/2\}\; .
$$
Thanks to the definition of $\mu$ and $\rho$ we have that 
$$
\rho\Big( \bigcup_n\{ u \; |\; \| L(\tau + t_n) u \|_{Z(s_n)} > \Lambda_{n+1}/2\}\Big) \leq \frac{C}{\Lambda^{8 \gamma}} \; .
$$
We reuse the sets $B_t(R)$ defined in Proposition \ref{prop-globcont}. As the support of $\rho$ is included in $\cup_R B_t(R)$ we have 
$$
\rho (\bigcup_n \{ u \; |\; \| L(\tau) \psi'_k(t_n) u \|_{Z(s_n)} > \Lambda_{n+1}/2\}) = \sup_R \rho_{t_n,R} (\bigcup_n \{ u \; |\; \| L(\tau) \psi'_k(t_n) u \|_{Z(s_n)} > \Lambda_{n+1}/2\})
$$
where $\rho_{t_n, R}(A)  = \rho(A \cap B_{t_n}(R))$.

Since for all $F$ bounded and Lipschitz continuous of $X^{-1/2-}$, 
$$
\int |F\circ \varphi - F\circ \phi_k| 1_{B_{t_n}(R)}d\mathbb P \leq \int |F\circ \varphi - F\circ \phi_k| d\mathbb P \rightarrow_{k\rightarrow \infty} 0
$$
we get that the sequence $\rho_{t_n, R, k} = \rho_k( \cdots \cap B_{t_n}(R))$ converges weakly towards $\rho_{t_n, R}$. 

As $\psi'(t_n)$ is continuous on $B_{t_n}(R)$ from $X^{-1/2-}$ to $E_k$ with any norm, we get that 
$$
\{ u \; |\; \| L(\tau) \psi'(t_n) u \|_{Z(s_n)} > \Lambda_{n+1}/2\}
$$
is open and hence 
$$
\bigcup_n\{ u \; |\; \| L(\tau) \psi'(t_n) u \|_{Z(s_n)} > \Lambda_{n+1}/2 \}
$$
is open too for the trace topology of $X^{-1/2-}$ over $B_{t_n}(R)$. Therefore
\begin{multline*}
\rho_{t_n, R}(\bigcup_n\{ u \; |\; \| L(\tau) \psi'_k(t_n) u \|_{Z(s_n)} > \Lambda_{n+1}/2\}) \leq \\
\liminf_j \rho_{t_n, R,j}\bigcup_n\{ u \; |\; \| L(\tau) \psi'_k(t_n) u \|_{Z(s_n)} > \Lambda_{n+1}/2\}\; .
\end{multline*}
As $\liminf_{j,k} r_{j,k} \leq \liminf_k r_{k,k}$ and $\rho_{t_n, R,j} \leq \rho_j$ we get
$$
\rho_{t_n,R}(A(\Lambda)^c)  \leq C \Lambda^{-8\gamma}+ \liminf_k  \rho_{k}\bigcup_n\{ u \; |\; \|\xi L(\tau) \psi'_k(t_n) u \|_{Z(s_n)} > \Lambda_{n+1}/2\}\; .
$$
Finally, we use the previous lemma, the fact that 
$$
\bigcup_n\{ u \; |\; \| L(\tau) \psi'_k(t_n) u \|_{Z(s_n)} > \Lambda_{n+1}/2\} \subset A_k(\Lambda/4)^c \cup \bigcup_n\{ u \; |\; \|\xi L(\tau + t_n) u \|_{Z(s_n)} > \Lambda_{n+1}/4\}
$$
and we take the supremum over $R$ to get
\begin{eqnarray*}
\rho(A(\Lambda)^c ) &\leq & \liminf_k \Big( \rho_k(A_k(\Lambda/4)^c ) + \rho_k \Big( \bigcup_n \Big\{ u \Big| \|\xi L(\tau + t_n )u\|_{Z(s_n)}\geq \Lambda_{n+1}/4\Big\} \Big) \Big)\\
  &\leq & C \Lambda^{-8 \gamma'}\; .
\end{eqnarray*}
\end{proof}

\begin{proposition} The set $A$ is of full $\rho$-measure. \end{proposition}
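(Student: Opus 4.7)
The plan is to deduce the proposition essentially for free from the previous lemma by letting $\Lambda \to \infty$. First, I would make explicit that the set $A$ referred to in the statement is the union $A = \bigcup_{\Lambda \geq 1} A(\Lambda)$ (or, equivalently, $\bigcup_{m \in \mathbb{N}^*} A(m)$, which avoids any measurability issue since it is a countable union of measurable sets). This interpretation is the only one consistent with both the preceding lemma, which controls $\rho(A(\Lambda)^c)$, and the role of $A$ in the global well-posedness scheme, where one wants arbitrary initial data to eventually fall into some $A(\Lambda)$.

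Next, I would observe that the complement satisfies
\begin{equation*}
A^c = \bigcap_{\Lambda \geq 1} A(\Lambda)^c.
\end{equation*}
Since this intersection is nested (clearly $A(\Lambda) \subset A(\Lambda')$ whenever $\Lambda \leq \Lambda'$, because enlarging $\Lambda$ enlarges every threshold $\Lambda_{n+1}$ appearing in the definition of $A_{k,n}(\Lambda)$), the complements are decreasing, so $\rho(A^c) \leq \rho(A(\Lambda)^c)$ for every $\Lambda \geq 1$.

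Applying the previous lemma then gives
\begin{equation*}
\rho(A^c) \leq \frac{C}{\Lambda^{8\gamma}}
\end{equation*}
for every $\Lambda \geq 1$. Letting $\Lambda \to \infty$ and using $\gamma > 0$ yields $\rho(A^c) = 0$, which is precisely the statement that $A$ has full $\rho$-measure. The only nontrivial step is the monotonicity $A(\Lambda) \subset A(\Lambda')$ for $\Lambda \leq \Lambda'$, which I would verify by unwinding the definitions: every condition $\|L(t)\psi_k(t_n)u_0\|_{Z(s_n)} \leq \Lambda_{n+1} = (1+n)^{\gamma'/4}\Lambda$ is immediately preserved when $\Lambda$ is replaced by $\Lambda' \geq \Lambda$, so $A_{k,n}(\Lambda) \subset A_{k,n}(\Lambda')$, hence $A_k(\Lambda) \subset A_k(\Lambda')$ and finally $A(\Lambda) \subset A(\Lambda')$ by monotonicity of $\limsup$. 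There is no real obstacle here; the content of the result is entirely carried by the previous lemma.
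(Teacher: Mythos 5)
Your proposal is correct and is exactly the argument the paper leaves implicit: taking $A=\bigcup_{m\in\mathbb{N}^*}A(m)$, the containment $A^c\subseteq A(\Lambda)^c$ together with the preceding lemma gives $\rho(A^c)\le C\Lambda^{-8\gamma}\to 0$. One small caveat: the monotonicity $A(\Lambda)\subset A(\Lambda')$ is not as immediate as you claim, because $T_n$ and hence the times $t_n$ in the definition of $A_{k,n}(\Lambda)$ depend on $\Lambda$, so the conditions for different values of $\Lambda$ are evaluated at different times — but this monotonicity is never actually needed, since $\rho(A^c)\le\rho(A(\Lambda)^c)$ already follows from $A^c$ being contained in each individual $A(\Lambda)^c$.
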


\subsection{Invariance of the measure}

\begin{proposition}
For every $A\subset X^{-1/2-}$ measurable and all time $t$ we have
\begin{equation}\label{invfinal}
\rho(\psi(t)^{-1}A)=\rho(A).
\end{equation}
\end{proposition}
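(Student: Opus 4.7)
I would reduce to a test--function formulation: since closed sets generate the topological $\sigma$--algebra of $X^{-1/2-}$, it is enough to show
$$\int F\circ\psi(t)\,d\rho = \int F\,d\rho$$
for every bounded Lipschitz $F:X^{-1/2-}\to\mathbb{R}$. My strategy is to compare with the finite--dimensional objects $\psi_k$ and $\rho_k$ via the telescoping decomposition
\begin{align*}
\int F\circ\psi(t)\,d\rho - \int F\,d\rho
&= \underbrace{\int\bigl(F\circ\psi(t)-F\circ\psi_k(t)\bigr)\,d\rho}_{T_1(k)} + \underbrace{\int F\circ\psi_k(t)\,d\rho - \int F\circ\psi_k(t)\,d\rho_k}_{T_2(k)} \\
&\quad + \underbrace{\int F\circ\psi_k(t)\,d\rho_k - \int F\,d\rho_k}_{T_3(k)} + \underbrace{\int F\,d\rho_k - \int F\,d\rho}_{T_4(k)}.
\end{align*}
Proposition~\ref{prop-fininvar} gives $T_3(k)=0$ for every $k$, while Proposition~\ref{prop-cvm} (weak convergence $\rho_k\rightharpoonup\rho$ tested against the bounded continuous $F$) gives $T_4(k)\to 0$.

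For $T_1(k)$ I would cut off by the set $A(\Lambda)$. Fix $\varepsilon>0$ and choose $\Lambda$ so that $\rho(A(\Lambda)^c)<\varepsilon$, which is possible since the previous subsection gives $\rho(A(\Lambda)^c)\lesssim \Lambda^{-8\gamma}$. On $A(\Lambda)$, Proposition~\ref{prop-guc}(2) asserts that $\sup_{u\in A(\Lambda)}d(\psi_k(t)u,\psi(t)u)\to 0$ as $k\to\infty$, so the Lipschitz bound
$$|T_1(k)|\le\|F\|_{\mathrm{Lip}}\sup_{u\in A(\Lambda)}d(\psi(t)u,\psi_k(t)u)+2\|F\|_\infty\rho(A(\Lambda)^c)$$
yields $\limsup_{k\to\infty}|T_1(k)|\le 2\|F\|_\infty\varepsilon$.

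The main obstacle is $T_2(k)$, because weak convergence $\rho_k\rightharpoonup\rho$ has to be applied against the $k$--dependent test function $F\circ\psi_k(t)$, and $\psi_k(t)$ is \emph{not} continuous on all of $X^{-1/2-}$. To handle this I would use Proposition~\ref{prop-globcont}: on each set $B_{|t|}(R)$ the flow $\psi_k(t)$ is continuous with values in the finite--dimensional $E_k$, and hence $F\circ\psi_k(t)$ is continuous there. Choose $R$ so large that $\rho(B_{|t|}(R)^c)<\varepsilon$ and $\sup_k\rho_k(B_{|t|}(R)^c)<\varepsilon$; the uniform--in--$k$ control of the tails follows from the Gaussian (Fernique) estimates for $\varphi$ and $\phi_k$, which are uniform in $k$. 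A $d$--continuous cutoff $\theta_R:X^{-1/2-}\to[0,1]$ equal to $1$ on $B_{|t|}(R)$ and supported in a slight enlargement makes $(F\circ\psi_k(t))\cdot\theta_R$ bounded continuous on $(X^{-1/2-},d)$, so that the weak convergence $\rho_k\rightharpoonup\rho$ (Proposition~\ref{prop-cvm}) can be invoked, while the uncut pieces contribute at most $2\|F\|_\infty\varepsilon$. Letting $k\to\infty$ and then $\varepsilon\to 0$ (with $\Lambda,R\to\infty$ accordingly) makes every term vanish, giving the invariance. The delicate point, and the step where care is required, is precisely this handling of $T_2(k)$ through the exhaustion by $B_{|t|}(R)$ together with the uniform Gaussian tail estimates.
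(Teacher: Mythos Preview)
Your telescoping strategy is natural, and the handling of $T_1$, $T_3$, $T_4$ is fine. The genuine gap is in $T_2(k)$. You say that once $(F\circ\psi_k(t))\cdot\theta_R$ is bounded and continuous, ``the weak convergence $\rho_k\rightharpoonup\rho$ can be invoked''. But weak convergence only gives $\int G\,d\rho_k\to\int G\,d\rho$ for a \emph{fixed} test function $G$; here the test function $G_k=(F\circ\psi_k(t))\cdot\theta_R$ depends on $k$, so Proposition~\ref{prop-cvm} says nothing about $\int G_k\,d\rho_k-\int G_k\,d\rho$. To make this work you would need, in addition, some equicontinuity of the family $(G_k)_k$ on the support of $\theta_R$, or a direct estimate of $\int G_k\,d(\rho-\rho_k)$ from the explicit coupling $\phi_k\to\varphi$; neither follows from Proposition~\ref{prop-globcont}, which gives continuity of each $\psi_k(t)$ on $B_{|t|}(R)$ but no uniformity in $k$. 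A secondary issue is the cutoff itself: the sets $B_{|t|}(R)$ are defined through norms strictly stronger than $d$, so a $d$--continuous $\theta_R$ supported in a $d$--neighbourhood of $B_{|t|}(R)$ need not have support contained in any $B_{|t|}(R')$, and the continuity of $\psi_k(t)$ is only asserted on the latter sets.

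The paper avoids this difficulty altogether by working with sets rather than test functions. It fixes a closed $K$, takes the open thickening $K_\varepsilon$, and uses the uniform convergence on $A(\Lambda)$ (your Proposition~\ref{prop-guc}(2)) \emph{as a set inclusion}: for $k$ large, $\psi(t)^{-1}(K_\varepsilon)\cap A(\Lambda)\subset\psi_k(t)^{-1}(K_{2\varepsilon})$. Then one applies the invariance of $\rho_k$ under $\psi_k(t)$ to get $\rho_k(\psi_k(t)^{-1}(K_{2\varepsilon}))=\rho_k(K_{2\varepsilon})$, and only \emph{after} this step --- when the set $K_{2\varepsilon}$ no longer depends on $k$ --- does one pass to the limit via the Portmanteau inequality $\limsup_k\rho_k(\overline{K_{2\varepsilon}})\le\rho(\overline{K_{2\varepsilon}})$. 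The remaining error $\rho_k(A(\Lambda)^c)$ is controlled uniformly in $k$ by the estimate $\rho_k(A_k(\Lambda)^c)\lesssim\Lambda^{-8\gamma}$ together with a diagonal/Fatou argument. This set--theoretic route sidesteps exactly the $k$--dependence that blocks your $T_2$.
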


\begin{proof}
The proof will make use of all the results we have proved in the paper. It will be enough to show the invariance of the measure for every $K\subset X^{-1/2-}$ closed set, since closed sets generate the topological $\sigma$-algebra of $X^{-1/2-}$.

For a fixed $K\subset X^{-1/2-}$ closed set, let us define for every $\varepsilon>0$ the open set
$$
K_\varepsilon=\{u\in X^{-1/2-}|\exists v\in K:d(u,v)<\varepsilon\}.
$$
Continuity of $\psi(t)$ on $X^{-1/2-}$ (see Lemma \ref{contl} and Proposition \ref{prop-loccont}) imply that $\psi(t)^{-1}(K_\varepsilon)$ is open. We decompose $K_\varepsilon=(K_\varepsilon\cap A(\Lambda)\cup (K_\varepsilon\cap A(\Lambda)^c)$ so that $K_\varepsilon\subset(K_\varepsilon\cap A(\Lambda))\cup A(\Lambda)^c$, and use weak convergence of $\rho_k$ towards $\rho$ (Proposition \ref{prop-cvm}) to write

\begin{eqnarray}\label{minn}
\nonumber
\rho(\psi(t)^{-1}(K)) & \leq &\rho(\psi(t)^{-1}(K_\varepsilon)) \\
& \leq & \liminf_{k\rightarrow+\infty} \left(\rho_k(\psi(t)^{-1}(K_\varepsilon)\cap A(\Lambda)) +\rho_kA(\Lambda)^c\right).
\end{eqnarray}
By construction $\displaystyle A(\Lambda)^c=\liminf_{j\rightarrow+\infty} A_j(\Lambda)$ and so $\rho_k(A(\Lambda))^c\leq \displaystyle\liminf_{j\rightarrow+\infty}\rho_k(A_j(\Lambda)^c)$; this yields
\begin{eqnarray*}
\rho(\psi(t)^{-1}(K))&\leq& \liminf_{k\rightarrow+\infty}\liminf_{j\rightarrow+\infty}\left(\rho_k(\psi(t)^{-1}(K_\varepsilon)\cap A(\Lambda))+\rho_k(A_j(\Lambda)^c)\right)
\\
&\leq& 
\liminf_{k\rightarrow+\infty}\left(\rho_k(\psi(t)^{-1}(K_\varepsilon)\cap A(\Lambda))+\rho_k(A_k(\Lambda)^c)\right)
\end{eqnarray*}
where in the last step we have used that 
$\displaystyle\liminf_{k\rightarrow+\infty}\liminf_{j\rightarrow+\infty}P_{j,k}\leq\liminf_{k\rightarrow+\infty}P_{k,k}$. We deal separately with the two terms above. First of all, we use Lemma 5.1 to estimate
\begin{equation}\label{term1}
\displaystyle\rho_k(A_k(\Lambda)^c)\leq\frac{C}{\Lambda^{8\gamma}}
\end{equation}
(notice that the bound is independent on $k$). For the other term,  we rely on the uniform convergence of $\psi_k$ towards $\psi$ in $A(\Lambda)$, namely on point $2$ of Proposition \ref{prop-guc} which we recall to state that for every $\varepsilon>0$ there exists $k_0\in\mathbb{N}$ such that for all $k\geq k_0$ and all $u_0\in A(\Lambda)$, $d(\psi(t)u_0,\psi_k(t)u_0)\leq \varepsilon$. Using this fact, we have that for all $u\in\psi(t)^{-1}(K_\varepsilon\cap A(\Lambda))$ and all $k\geq k_0$ there exists $\tilde{u}\in K$ such that  $d(\psi(t)u,\tilde{u})<\varepsilon$ and
\begin{equation*}
d(\psi_k(t)u,\tilde{u})\leq d(\psi_k(t)u,\psi(t)u)+d(\psi(t)u,\tilde{u})<2\varepsilon,
\end{equation*}
which means that, for all $k\geq k_0$,
\begin{equation*}
\psi(t)^{-1}(K_\varepsilon)\cap A(\Lambda)\subset \psi_k(t)^{-1}(K_{2\varepsilon}).
\end{equation*}
This provides for the second term in \eqref{minn} the estimate
\begin{equation}\label{term2}
\rho_k(\psi(t)^{-1}(K_\varepsilon)\cap A(\Lambda))\leq \rho_k(\psi_k(t)^{-1}(K_{2\varepsilon})).
\end{equation}
Plugging \eqref{term1} and \eqref{term2} into \eqref{minn} and using invariance of $\rho_k$ under the flow $\psi_k$ (Proposition \ref{prop-fininvar}) yields
\begin{equation*}
\rho(\psi(t)^{-1}(K))\leq \liminf_{k\rightarrow+\infty}\rho_k(\psi_k(t)^{-1}(K_{2\varepsilon}))+\frac{C}{\Lambda^{8\gamma}}
\leq \liminf_{k\rightarrow+\infty}\rho_k(K_{2\varepsilon})+\frac{C}{\Lambda^8\gamma}
\end{equation*}
We now use that $\liminf\leq\limsup$ and that $K_{2\varepsilon}\subset \overline {K_{2\varepsilon}}$ to estimate further with
\begin{equation*}
\rho(\psi(t)^{-1}(K))\leq \limsup_{k\rightarrow+\infty}\rho_k(\overline{K_{2\varepsilon}})+\frac{C}{\Lambda^{8\gamma}}.
\end{equation*}
Since $\overline{K_{2\varepsilon}}=\{u\in X^{-1/2-}| d(u,K)\leq 2\varepsilon\}$ is closed we have again, by weak convergence of $\rho_k$, that $\displaystyle\limsup_{k\rightarrow+\infty}\displaystyle\rho_k(\overline{K_{2\varepsilon}})\leq\rho (\overline{K_{2\varepsilon}})$. Letting $\varepsilon\rightarrow 0$, dominated convergence theorem gives
\begin{equation*}
\rho(\psi(t)^{-1}(K))\leq \rho(K)+\frac{C}{\Lambda^{8\gamma}};
\end{equation*}
letting also $\Lambda\rightarrow+\infty$ finally gives
\begin{equation*}
\rho(\psi(t)^{-1}(K))\leq \rho(K).
\end{equation*}
On the other hand, the continuity and reversibility of the flow $\psi(t)$ on $X^{-1/2-}$ imply
\begin{equation*}
\rho(K)\leq\rho(\psi(-t)^{-1}\psi(t)^{-1}(K))\leq \rho(\psi(t)^{-1}(K))
\end{equation*}
which concludes the invariance for the measure $\rho$ on closed sets of $X^{-1/2-}$. Since closed sets generate the topological $\sigma$-algebra of $X^{-1/2-}$ the proof is concluded.
\end{proof}

\appendix

\section{Finite propagation speed}
In this appendix, we prove the property related to finite propagation speed that we used throughout the paper. It gives estimates on the influence that have different parts of the initial datum $u_0$ on the restricted $1_{|x|\leq R} L(t) u_0(x)$ at some fixed time $t$.

\begin{proposition} Let $R \geq 0$, $t\in \R^*$ and $T \geq |t|$ and $p\geq 1$, for all $f$ we have 
$$
\|1_{|x|\leq R} L(t)\Pi_k f\|_{L^p} \lesssim \|1_{|x|\leq R}L(t) \Pi_k 1_{|y|\leq R+3M_kT} f(y)\|_{L^p} + \frac{1}{M_k^2 T}  \sup_y \|f\|_{L^p([y-R,y+R])}\; .
$$
\end{proposition}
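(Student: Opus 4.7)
The plan is to represent $L(t)\Pi_k$ as convolution with a kernel $K_t$ and exploit the pointwise decay of $K_t$ far from the origin. Specifically, by Fourier inversion,
$$
L(t)\Pi_k f(x)=(K_t*f)(x),\qquad K_t(z)=\int_{\R}e^{-in^2 t}\eta(n/M_k)e^{inz}\,dn,
$$
and the central point is the estimate
$$
|K_t(z)|\leq \frac{C_\eta}{M_k\,z^2}\qquad\text{whenever}\qquad |z|>3M_kT.
$$

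To prove this kernel bound I would write the integrand as $e^{i\phi(n)}\eta(n/M_k)$ with $\phi(n)=nz-n^2 t$, so that $\phi'(n)=z-2nt$ and $\phi''(n)=-2t$. On $\operatorname{supp}\eta(\cdot/M_k)\subset[-M_k,M_k]$, combined with the hypotheses $T\geq|t|$ and $|z|>3M_kT$, one gets $|\phi'(n)|\geq|z|-2M_k|t|\geq|z|/3$. This makes the adjoint IBP operator $L^*f=-\partial_n(f/(i\phi'))$ well-defined (no boundary contributions arise, as $\eta$ is compactly supported), and applying it twice to $\eta(n/M_k)$ produces a finite sum of terms of the form $\eta^{(j)}(n/M_k)(\phi'')^\ell/(M_k^j(\phi')^{j+2\ell})$ with $j+2\ell=2$. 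Using $|\phi''|=2|t|\leq|z|/(3M_k)$ and $|\phi'|\geq|z|/3$, each such term is pointwise bounded by $C/(M_k^2 z^2)$ on $|n|\leq M_k$. Since $K_t(z)=\int(L^*)^2\eta(n/M_k)\,e^{i\phi(n)}\,dn$ and the support has measure $2M_k$, the claimed bound follows.

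Next, I would split $f=f_1+f_2$ with $f_1=1_{|y|\leq R+3M_kT}f$; by linearity, the $f_1$-contribution is exactly the first term on the right-hand side. For $f_2$ and $|x|\leq R$ one has $|x-y|\geq|y|-|x|>3M_kT$, so the kernel estimate applies and
$$
|(K_t*f_2)(x)|\leq\frac{C}{M_k}\int_{|x-y|>3M_kT}\frac{|f(y)|}{(x-y)^2}\,dy.
$$
To convert this convolution into the sup-type norm on the right-hand side, partition the $y$-axis into intervals of length $2R$, and apply H\"older's inequality on each piece to replace the local $L^1$-mass of $f$ by $(2R)^{1/p'}\sup_y\|f\|_{L^p([y-R,y+R])}$. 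Summing against the $1/z^2$ decay of the kernel and comparing with $\int_{|z|>3M_kT}dz/z^2=2/(3M_kT)$, the series sums to $O(1/(R\,M_kT))$. Taking the $L^p$ norm over $|x|\leq R$ contributes an additional factor $(2R)^{1/p}$, and combining the powers of $R$ with $(2R)^{1/p'}/R$ yields the clean bound $C/(M_k^2 T)\cdot\sup_y\|f\|_{L^p([y-R,y+R])}$.

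The main obstacle is the kernel decay estimate: the two integrations by parts spawn several intermediate terms, and one must verify that the $1/M_k$ gain (coming from both derivatives falling on $\eta(\cdot/M_k)$) is not destroyed by subleading terms involving $\phi''=-2t$. These competing terms are controlled by absorbing the $|t|/|z|$-type losses into the decay hypothesis $|z|>3M_kT$, which converts them into $1/M_k$-type gains. The subsequent H\"older/summation argument is routine convolution bookkeeping, requiring only a short separate inspection of the boundary interval straddling the frontier $|z|=3M_kT$.
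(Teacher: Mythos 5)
Your kernel estimate is correct and is essentially the paper's own argument: two non‑stationary‑phase integrations by parts on the support $[-M_k,M_k]$, using $|\phi'(n)|=|z-2nt|\geq |z|/3$ for $|z|\geq 3M_kT$, with each derivative of $\eta(\cdot/M_k)$ contributing $M_k^{-1}$ and each factor $\phi''=-2t$ contributing $|t|/|z|\lesssim M_k^{-1}$, so every term is $O(M_k^{-2}z^{-2})$ and integrating over a support of length $2M_k$ gives $|K_t(z)|\lesssim M_k^{-1}z^{-2}$. (Your bookkeeping ``$j+2\ell=2$'' should read $j+\ell=2$ with $(\phi')^{-(2+\ell)}$ in the denominator, but the three resulting terms are exactly the paper's.) The splitting $f=f_1+f_2$ with $f_1=1_{|y|\leq R+3M_kT}f$ also matches.

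The gap is in how you convert the tail convolution into the sup‑norm. Partitioning into intervals of length $2R$, applying H\"older on each, and summing $\sup_{I_j}|x-y|^{-2}$ does \emph{not} give $O(1/(RM_kT))$: the Riemann‑sum comparison for a decreasing function carries the boundary term $\sup_{|z|\geq 3M_kT}z^{-2}=(3M_kT)^{-2}$, i.e.\ the interval straddling $|x-y|=3M_kT$ contributes $(M_kT)^{-2}$ to the sum, which dominates $1/(RM_kT)$ exactly when $R\gtrsim M_kT$. Combined with your factors $(2R)^{1/p'}$ (H\"older) and $(2R)^{1/p}$ ($L^p_x$ over $[-R,R]$), this interval produces a contribution of order $R/(M_k^3T^2)=\frac{1}{M_k^2T}\cdot\frac{R}{M_kT}$, which is not $\lesssim 1/(M_k^2T)$ in the regime where the paper actually applies the proposition ($R\sim \pi N_k=\pi 2^k$, $M_k=k$, $T\leq 1$, so $R\ggg M_kT$). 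Your promised ``short separate inspection of the boundary interval'' cannot be routine within this scheme: even the sharper H\"older bound $\|\,|x-\cdot|^{-2}\|_{L^{p'}(|x-y|\geq 3M_kT)}$ still leaves a factor $(R/(M_kT))^{1/p}$. The correct and simpler step, which is what the paper does, is Minkowski's integral inequality applied before any localization in $x$:
$$
\Big\|\int K_t(y)\,f_2(\cdot-y)\,dy\Big\|_{L^p([-R,R])}\leq\int_{|y|\geq 3M_kT}|K_t(y)|\,\|f(\cdot-y)\|_{L^p([-R,R])}\,dy\leq\|1_{|y|\geq 3M_kT}K_t\|_{L^1}\,\sup_{y}\|f\|_{L^p([y-R,y+R])},
$$
together with $\|1_{|y|\geq 3M_kT}K_t\|_{L^1}\lesssim M_k^{-1}\int_{3M_kT}^{\infty}z^{-2}\,dz\lesssim (M_k^2T)^{-1}$. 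This avoids the $R^{1/p'}\cdot R^{1/p}=R$ loss entirely and closes the proof.
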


\begin{proof} First, we prove that for $|z|\geq 3 M_k T$ with $T\geq |t|$, we have
$$
|K_t (z)|\leq C_\eta \frac1{M_k} \frac1{z^2}\; .
$$
By definition, we have 
$$
K_t(z) = \int_{\R} e^{i n^2 t +in z} \eta_k(n) dn = \int_{\R} (2in t + iz ) e^{i n^2 t +in z} \frac{\eta_k(n)}{2in t + iz}dn
$$
where $\eta_k(n) = \eta \Big( \frac{n}{M_k}\Big)$. Since $|z| \geq 3 M_k T$, we have $|2in t +iz| \geq \frac13 |z| \geq 3M_k T > 0$. Let $f(n) =  \frac{\eta_k(n)}{2in t + iz}$. the function $f$ is $\mathcal C^\infty$ with compact support hence by integration by part
$$
K_t(z) = -\int_{\R}  e^{i n^2 t +in z} f'(n) dn \; .
$$
With a second integration by parts, we have
$$
K_t(z) = \int_{\R}  e^{i n^2 t +in z} g'(n) dn 
$$
with $g(n) =  \frac{f'(n)}{-2in t + iz}$. Hence as the support of $g'$ is included in $[-M_k, M_k]$,
$$
|K_t(z)|\leq 2M_k \|g'\|_{L^\infty}\; .
$$
By definition of $g$, we have 
$$
g'(n) = \frac{\eta_k''(n)}{(2in t + iz)^2} + \frac{6it \eta_k'}{(2in t + iz)^3} + \frac{3(2it)^2}{(2in t + iz)^4}
$$
therefore
$$
|g'(n)| \lesssim \frac{\|\eta_k''\|_{L^\infty}}{|z|^2} + \frac{\|\eta_k'\|_{L^\infty}|t|}{|z|^3} + \frac{\|\eta_k\|_{L^\infty}t^2}{z^4}\; .
$$
By definition $\eta_k' = \frac1{M_k} \eta'$ and $\frac{|t|}{|z|} \lesssim \frac1{M_k}$, which yields
$$
\|g'\|_{L^\infty} \leq C_\eta \frac{1}{M_k^2 |z|^2}\; .
$$
We get
$$
|K_t(z)|\leq C_\eta  \frac{1}{M_k |z|^2}\; .
$$

By definition, we have 
$$
1_{|x|\leq R} L(t)f(x) = 1_{|x|\leq R} K_t * f (x)
$$
that we divide in two parts
$$
1_{|x|\leq R} L(t)f(x) =I + II
$$
with 
$$
I = 1_{|x|\leq R} \int K_t(y) 1_{|x-y|\leq R+ 3M_k T} f(x-y)dy
$$
and 
$$
II = 1_{|x|\leq R} \int K_t(y) 1_{|x-y|\geq R+ 3M_k T} f(x-y)dy \; .
$$
We have 
$$
\|I\|_{L^p } = \|1_{|x|\leq R}L(t) \Pi_k 1_{|y|\leq R+3M_kT} f(y)\|_{L^p}
$$
which is the first part of the estimate.

For $II$, we use that if $|x|\leq R$ and $|x-y|\geq R + 3M_k T$, then $|y|\geq 3M_k T$. Hence, 
$$
II = 1_{|x|\leq R} \int 1_{|y|\geq 3M_k T}   K_t(y) 1_{|x-y|\geq R+ 3M_k T} f(x-y)dy \; .
$$
By taking its $L^p$ norm, we get
$$
\|II\|_{L^p} \leq \|1_{|y|\geq 3M_k T}   K_t(y)\|_{L^1} \sup_y \|1_{|x-y|\geq R+ 3M_k T}f(x-y)\|_{L^p(x\in [-R,R])}\; .
$$
Using the estimates on $K_t(z)$ we finally get
$$
\|II\| \lesssim \frac{1}{M_k^2 T}  \sup_y \|f\|_{L^p([y-R,y+R])}\; .
$$
\end{proof}

\bibliographystyle{amsplain}
\bibliography{nlsnoncomp} 
\nocite{*}

\end{document}